\newif\ifsubsections
	\definecolor{linkred}{rgb}{0.0,0.0,0.0}
	\definecolor{linkblue}{rgb}{0,0.0,0.0}
\theoremstyle{plain}
\newtheorem{theorem}[equation]{Theorem}
\newtheorem{proposition}[equation]{Proposition}
\newtheorem{lemma}[equation]{Lemma}
\theoremstyle{definition}
\newtheorem{remark}[equation]{Remark}
\newtheorem{definition}[equation]{Definition}
\newtheorem{conjecture}[equation]{Conjecture}
\newtheorem{assumption}[equation]{Assumption}
\newcommand{\N} {\mathbb{N}}
\newcommand{\C} {\mathbb{C}}
\newcommand{\R} {\mathbb{R}}
\newcommand{\E} {\mathbb{E}}
\renewcommand{\P} {\mathbb{P} \, }
\newcommand{\BB} {\mathcal{B}}
\newcommand{\DD} {\mathcal{D}}
\newcommand{\LL} {\mathcal{L}}
\newcommand{\NN} {\mathcal{N}}
\renewcommand{\a} {\alpha}
\newcommand{\e} {\varepsilon}
\renewcommand{\d} {\delta}
\newcommand{\D} {\Delta}
\newcommand{ \Om} {\Omega}
\renewcommand{\t} {\tau}
\renewcommand{\l} {\lambda}
\renewcommand{\Pr}[2][]{\mathbb{P}_{#1} \left\{ #2 \rule{0mm}{3mm}\right\}}
\newcommand{\etc} {,\ldots,}
\DeclareMathOperator{\dist }{dist}
\DeclareMathOperator{\Span }{span}
\DeclareMathOperator{\real}{Real}
\newcommand{\pr}[2]{\left \langle {#1} , {#2} \right \rangle}
\newcommand{\norm}[1]{\left \| #1 \right \|}
\begin{document}

%
%
%
%
%
%

\title{Delocalization of eigenvectors of random matrices \\ Lecture notes}

%
%
\author{Mark Rudelson}\thanks{ Partially supported by NSF grant, DMS-1464514.}
\address{ Department of Mathematics, University of Michigan. }
\email{rudelson@umich.edu}
%
%
\subjclass[2010]{Primary 60B20; Secondary 05C80}
\keywords{random matrices, eigenvectors, random graphs}

\begin{abstract}
Let $x \in S^{n-1}$ be a unit eigenvector of an $n \times n$ random matrix. This vector is delocalized if it is distributed roughly uniformly over the real or complex sphere. This intuitive notion can be quantified in various ways. In these lectures, we will concentrate on the \emph{no-gaps delocalization}. This type of delocalization means that with high probability, any non-negligible subset of the support of $x$ carries a non-negligible mass.  Proving the no-gaps delocalization requires establishing small ball probability bounds for the projections of random vector. Using Fourier transform, we will prove such bounds in a simpler case of a random vector having independent coordinates of a bounded density. This will allow us to derive the no-gaps delocalization for matrices with random entries having a bounded density. In the last section, we will discuss the applications of delocalization to the spectral properties of Erd\H{o}s-R\'enyi random graphs.
\end{abstract}  

%
%
\maketitle
\thispagestyle{empty}

%
%


\section{introduction}

Let $G$ be a symmetric random matrix with independent above the diagonal normal random entries having expectation $0$ and variance $1$ ($N(0,1)$ random variables). The distribution of such matrices is invariant under the action of the orthogonal group $O(n)$. Consider a unit eigenvector $v \in S^{n-1}$ of this matrix. The distribution of the eigenvector should share the invariance of the distribution of the matrix itself, so $v$ is uniformly distributed over the real unit sphere $S_{\R}^{n-1}$. Similarly, if $\Gamma$ is an $n \times n$ complex random matrix with independent entries whose real and imaginary part are independent $N(0,1)$ random variables, then the distribution of $\Gamma$ is invariant under the action of the unitary group $U(n)$. This means that any unit eigenvector of $\Gamma$ is uniformly distributed over the complex unit sphere $S_{\C}^{n-1}$.  For a general distribution of entries, we cannot expect such strong invariance properties. Indeed, if the entries of the matrix are random variables taking finitely many values, the eigenvectors will take finitely many values as well, so the invariance is impossible.
Nevertheless, as $n$ increases, a central limit phenomenon should kick in, so the distribution of an eigenvector should be approximately uniform. This vague idea called delocalization can be made mathematically precise in a number of ways. Some of these formalizations use the local structure of a vector. One can fix in advance several coordinates of the eigenvector and show that the joint distribution of  these coordinates approaches the distribution of a properly normalized gaussian vector, see \cite{BY}.

In these notes, we adopt a different approach to delocalization coming from the non-asymptotic random matrix theory. The asymptotic theory is concerned with establishing limit distributions of various spectral characteristics of a family of random matrices when the sizes of these matrices tend to infinity. In contrast to it, the non-asymptotic theory strives to obtain explicit, valid with high probability bounds for the matrices of a large fixed size. This approach is motivated by applications primarily to convex geometry, combinatorics, and computer science. For example, while analyzing performance of an algorithm solving a noisy linear system, one cannot let the size of the system go to infinity. 
An interested reader can find an introduction to the non-asymptotic theory in \cite{RV ICM, V survey, R survey}.
In this type of problems, strong probabilistic guarantees are highly desirable, since one typically wants to show that many ``good'' events occur at the same time. This will be the case in our analysis of the delocalization behavior as well

 We will  consider the global structure of the eigenvector of a random matrix controlling all coordinates of it at once. The most classical type of such delocalization is the $\ell_{\infty}$ norm bound. If $v \in S^{n-1}$ is a random vector uniformly distributed over the unit sphere, then with high probability, all its coordinates are small. This is easy to check using the concentration of measure. Indeed, the vector $v$ has the same distribution as $g/\norm{g}_2$, where $g \in \R^n$ or $\C^n$ is the standard Gaussian vector, i.e., a vector with the independent $N(0,1)$ coordinates. By the concentration of measure, $\norm{g}_2=c \sqrt{n}(1+o(1))$ with high probability. Also, since the coordinates of $g$ are independent,
 \[
   \E \norm{g}_{\infty}= \E \max_{j \in [n]} |g_j| \le C \sqrt{\log n},
 \]
 and the measure concentration yields that $\norm{g}_{\infty} \le C' \sqrt{\log n}$ with high probability. Therefore, with high probability,
 \[
   \norm{v}_{\infty} \le C \frac{\sqrt{\log n}}{\sqrt{n}}.
 \]
 Here and below, $C, \bar{C},C', c$, etc. denote absolute constants which can change from line to line, or even within the same line.
 
 One would expect to have a similar $\ell_{\infty}$ delocalization for a general random matrix. The bound
 \[
   \norm{v}_{\infty} \le C \frac{\log^c n}{\sqrt{n}}
 \]
 for unit eigenvectors was proved in \cite{ESY 1, ESY 2} for Hermitian random matrices and in \cite{RV delocalization} for random matrices all whose entries are independent. Moreover, in the case of the Hermitian random matrix with i.i.d. subgaussian entries, the previous estimate has been established with the optimal power of the logarithm $c=1/2$, see \cite{VW}.
 We will not discuss the detailed history and the  methods of obtaining the $\ell_{\infty}$ delocalization in these notes, and refer a reader to a comprehensive recent survey \cite{OVW}.

 Instead, we are going to concentrate on a different manifestation of the delocalization phenomenon. The $\ell_{\infty}$ delocalization rules out peaks in the distribution of mass among the coordinates of a unit eigenvector.
  In particular, it means that with high probability, the most of the mass, i.e., $\ell_2$ norm of a  unit eigenvector cannot be localized on a few coordinates. We will consider a complementary phenomenon, namely ruling out chasms in the mass distribution. More precisely, we aim at showing that with high probability, any non-negligible set of the coordinates of a unit eigenvector carries a relatively large mass. We call this property of lack of almost empty zones in the support of the eigenvector the \emph{no-gaps delocalization}.

No-gaps delocalization property holds for the eigenvectors of many natural classes of random matrices. This includes matrices, whose all entries are independent, random real symmetric and skew-symmetric matrices, random complex hermitian matrices with independent real and imaginary parts of the entries, etc. We formulate the explicit assumption on the dependencies of the entries below.
\begin{assumption}[Dependencies of entries]         \label{A}
  Let $A$ be an $n \times n$ random matrix.
  Assume that for any $i,j \in [n]$, the entry $A_{ij}$ is independent of the rest of the entries except possibly $A_{ji}$.
We also assume that the real part of $A$ is random and the imaginary part is fixed.
\end{assumption}

Fixing the imaginary part in Assumption~\ref{A} allows us to handle real random matrices.
This assumption can also be arranged for complex matrices with independent real and imaginary parts,
once we condition on the imaginary part. One can even consider a more general situation
where the real parts of the entries conditioned on the imaginary parts have variances bounded below.

We will also assume $\|A\| = O(\sqrt{n})$ with high probability.
This natural condition holds, in particular, if the entries of $A$ have mean zero and
bounded fourth moments (see, e.g., \cite{Tao book}).
To make this rigorous, we fix a number $M \ge 1$
and introduce the boundedness event
\begin{equation}         \label{eq: BAM}
\BB_{A,M} := \left\{ \|A\| \le M \sqrt{n} \right\}.
\end{equation}

We will formulate two versions of  the no-gaps delocalization theorem, for absolutely continuous entries with bounded density and for general entries. Although the second case is includes the first one, the results under  the bounded density assumtion are stronger, and the proofs are significantly easier. Let us formulate the first assumption explicitly.

\begin{assumption}[Continuous distributions]	\label{A: continuous distribution}
  We assume that the real parts of the matrix entries have
  densities bounded by some number $K \ge 1$.
\end{assumption}

Under Assumptions \ref{A} and \ref{A: continuous distribution}, we show that every subset of at least
eight coordinates carries a non-negligible part of the mass of any eigenvector.
This is summarized in the following theorem.

\begin{theorem}[Delocalization: continuous distributions]	\label{thm: delocalization continuous}
  Let $A$ be an $n \times n$ random matrix which satisfies Assumptions \ref{A} and \ref{A: continuous distribution}.
  Choose $M \ge 1$.
  Let $\e \in [8/n,1)$ and $s>0$.
  Then,  the following event holds with probability
  at least
  \[
    1-(Cs)^{ \e n}- \P(\BB_{A,M}^c).
  \]
  Every eigenvector $v$ of $A$ satisfies
  $$
  \|v_I\|_2 \ge (\e s)^6 \|v\|_2 \quad \text{for all } I \subset [n], \; |I| \ge \e n.
  $$
  Here $C = C(K,M) \ge 1$.
\end{theorem}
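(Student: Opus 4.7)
My strategy is to fix a subset $I\subset[n]$ with $|I|=m:=\lceil\varepsilon n\rceil$, bound the probability that some eigenvector of $A$ places mass at most $\tau:=(\varepsilon s)^6$ on $I$, and then union-bound over the at most $\binom{n}{m}\leq(e/\varepsilon)^{\varepsilon n}$ such subsets, absorbing that factor into the constant in $(Cs)^{\varepsilon n}$. On $\BB_{A,M}$ every eigenvalue satisfies $|\lambda|\leq M\sqrt n$. Suppose $v$ is a unit eigenvector with $Av=\lambda v$ and $\|v_I\|_2\leq\tau$; then $\|v_{I^c}\|_2\geq 1-\tau\geq 1/2$, and writing $w:=v_{I^c}/\|v_{I^c}\|_2$ for the normalized restriction to $I^c$, the blocks of $(A-\lambda)v=0$ indexed by rows in $I$ and in $I^c$ give (using $\|A\|\leq M\sqrt n$ throughout)
\begin{align*}
\|A_{I,I^c}\,w\|_2 &\leq CM\sqrt n\,\tau,\\
\|(A_{I^c,I^c}-\lambda\,\mathrm{Id})\,w\|_2 &\leq CM\sqrt n\,\tau.
\end{align*}
The first inequality is the small-ball event I will exploit on the random off-diagonal block $A_{I,I^c}$; the second says $(w,\lambda)$ is an approximate eigenpair of the principal submatrix $A_{I^c,I^c}$. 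Crucially, by Assumption~\ref{A} the two blocks $A_{I,I^c}$ and $A_{I^c,I^c}$ are independent, so the second constraint can be used to restrict $(w,\lambda)$ without disturbing the randomness available for the first.

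\textbf{Anti-concentration, reduced net, and union bound.} The entries of $A_{I,I^c}$ are mutually independent---their symmetric partners lie in the transposed block $I^c\times I$, so no variable within $A_{I,I^c}$ is repeated---and each has density at most $K$ by Assumption~\ref{A: continuous distribution}. Hence, for a \emph{fixed} unit $w$ supported on $I^c$, the coordinates of $A_{I,I^c}w$ are independent with densities at most $K/\max_j|w_j|$, and tensorizing the standard small-ball inequality gives
\[
\Pr{\|A_{I,I^c}\,w\|_2<\beta\sqrt m}\leq (CK\beta/\max_j|w_j|)^m.
\]
Conditioning on $A_{I^c,I^c}$, the second inequality above confines $(w,\lambda)$ to a neighborhood of the spectral graph of $A_{I^c,I^c}$: $\lambda$ lies within $O(\sqrt n\,\tau)$ of some eigenvalue of $A_{I^c,I^c}$, and $w$ lies near the corresponding approximate spectral subspace. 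Since the spectrum of $A_{I^c,I^c}$ sits in $[-M\sqrt n,M\sqrt n]$, this restricted locus admits a net of cardinality only polynomial in $n$ and $1/\tau$, vastly smaller than the $(C/\tau)^{n-m}$ needed for a generic sphere net. Applying the small-ball bound above at each net point, absorbing approximation errors via $\|A\|\leq M\sqrt n$, and union-bounding over $I$ yields a failure probability of order $(CK\sqrt n\,\tau/\sqrt\varepsilon)^{m}\cdot\mathrm{poly}(n,1/\tau)\cdot(e/\varepsilon)^{\varepsilon n}$; the calibration $\tau=(\varepsilon s)^6$ is chosen precisely so that the powers of $\sqrt n$, $1/\varepsilon$, and $1/\tau$ are absorbed into the $\tau^{\varepsilon n/6}=(\varepsilon s)^{\varepsilon n}$ pocket, leaving the target $(Cs)^{\varepsilon n}$.

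\textbf{Main obstacle.} The central difficulty is that a blunt $\eta$-net on the unit sphere of $\R^{I^c}$ has cardinality $(C/\tau)^{(1-\varepsilon)n}$, which would overwhelm the anti-concentration gain $(CK\tau\sqrt n)^{\varepsilon n}$ as soon as $\varepsilon<1/2$; the \emph{independent-blocks} decoupling of $A_{I,I^c}$ from $A_{I^c,I^c}$, together with the approximate-eigenvector constraint on $w$ coming from the latter block, is what rescues the net count. The exponent $6$ in the delocalization scale $(\varepsilon s)^6$ then absorbs the accumulated precision losses: the factor $1/\sqrt\varepsilon$ from tensorization across the $m=\varepsilon n$ rows, the normalizer $\|v_{I^c}\|\geq 1/2$ converting $v$ into a unit $w$ on $I^c$, the Lipschitz losses from the $\lambda$-net on $[-M\sqrt n,M\sqrt n]$, and the $M$-dependent operator-norm factors inherited from $\BB_{A,M}$.
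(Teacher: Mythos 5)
Your decoupling of the independent blocks $A_{I,I^c}$ and $A_{I^c,I^c}$ is legitimate under Assumption~\ref{A}, but the step on which the whole argument rests --- that, after conditioning on $A_{I^c,I^c}$, the approximate-eigenpair locus $\{(w,\lambda):\ \|(A_{I^c,I^c}-\lambda)w\|_2\le CM\sqrt{n}\,\tau\}$ admits a net of cardinality polynomial in $n$ and $1/\tau$ --- is not true. With $\tau=(\varepsilon s)^6$ fixed and $\varepsilon,s$ constants, the tolerance $CM\sqrt{n}\,\tau$ is a \emph{constant fraction} of the spectral diameter. Even if $A_{I^c,I^c}$ were symmetric, the locus contains the entire unit sphere of the span of all eigenvectors whose eigenvalues fall in a window of length $CM\sqrt{n}\,\tau$ around $\lambda$; nothing in the hypotheses of Theorem~\ref{thm: delocalization continuous} (no moment assumptions, no identical distribution, arbitrary fixed imaginary parts) prevents that window from containing a number of eigenvalues proportional to $n$, and even with a local semicircle law (far beyond these hypotheses) one would only get dimension $\Theta(\tau n)$, still linear in $n$, so any net has cardinality exponential in $n$, not $\mathrm{poly}(n,1/\tau)$. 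Worse, Assumption~\ref{A} allows fully non-symmetric $A$, so $A_{I^c,I^c}$ is non-normal and there is no spectral decomposition at all: its $\tau\sqrt n$-pseudospectral set need not be confined near spans of few eigenvectors. This is exactly the obstruction the paper's proof is built to avoid: it never inspects the spectrum of $A_{I^c,I^c}$, but instead splits the rows of $(A-\lambda)_{I^c}$ into a tall block $B$ and an independent block $G$ of $\varepsilon n/2$ rows taken from $I$, uses the negative second moment identity together with Theorem~\ref{thm: dens PX} to show that, outside an event of probability $(C\tau)^{c\varepsilon n}$, $B$ is bounded below off a random subspace $E^-$ of dimension at most $\varepsilon n/200$, and only then runs a net argument --- on $E^-$ alone --- where the net size is beaten by the small-ball bound coming from the $\varepsilon n/2$ independent rows of $G$ (Lemma~\ref{lem: fixed vector}), finishing with Lemma~\ref{lem: A decomposition}. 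Your constraint-from-the-big-block idea does not produce a comparably low-dimensional exceptional set, so the union bound you invoke cannot close.

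There is a second, independent quantitative gap. The per-row anti-concentration you use, density at most $K/\max_j|w_j|$, is the trivial bound obtained by conditioning on all but the largest coordinate, and it loses a factor $\sqrt n$ because $\max_j|w_j|$ can be of order $n^{-1/2}$. The resulting factor $\bigl(CKM\sqrt n\,\tau/\sqrt\varepsilon\bigr)^{\varepsilon n}$ cannot be absorbed into $(Cs)^{\varepsilon n}$ with $C=C(K,M)$ independent of $n$: for fixed $\varepsilon,s$ the base $\sqrt n\,(\varepsilon s)^6/\sqrt\varepsilon$ tends to infinity with $n$, so the claimed bound becomes vacuous precisely in the regime of constant $\varepsilon$ that the theorem covers; the ``$\tau^{\varepsilon n/6}$ pocket'' cannot swallow a power of $n$ because $\tau$ does not shrink with $n$. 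The dimension-free bound needed here is exactly the rank-one case of Theorem~\ref{thm: dens PX} (the Ball--Nazarov argument), packaged in Lemmas~\ref{lem: fixed row and vector} and~\ref{lem: fixed vector}, which also handles the point you leave untouched: $\lambda$ and $v$ are complex even when $A$ is real, so the rows must be treated as complex vectors with only the real parts random, and the $\lambda$-net must cover a disc in $\C$ rather than an interval.
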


Note that we do not require any moments for the matrix entries, so heavy-tailed distributions are allowed.
However, the boundedness assumption formalized by \eqref{eq: BAM}
implicitly yields some upper bound on the tails. Indeed, if the entries of $A$
are i.i.d. and mean zero, then $\|A\| = O(\sqrt{n})$ can only hold if the
fourth moments of entries are bounded.

Further, we do not require that the entries of $A$ have mean zero.
Therefore, adding to $A$ any fixed matrix of norm $O(\sqrt{n})$ does not affect our results.

\medskip

Extending Theorem \ref{thm: delocalization continuous} to general, possibly discrete distributions,
is a challenging task. We are able to do this for matrices with identically distributed entries
and under the mild assumption that the distributions of entries are not too concentrated near a single number.

\begin{assumption}[General distribution of entries]	\label{A: general distribution}
  We assume that the real parts of the matrix entries are
  distributed identically with a random variable $\xi$ that satisfies
  \begin{equation}							\label{eq: xi}
  \sup_{u \in \R} \Pr{|\xi-u| \le 1} \le 1-p, \quad \Pr{|\xi| > K} \le p/2 \quad \text{for some } K, p >0.
  \end{equation}
\end{assumption}

Among many examples of discrete random variables $\xi$ satisfying Assumption~\ref{A: general distribution},
the most prominent one is the symmetric {\em Bernoulli} random variable $\xi$,
which takes values $-1$ and $1$ with probability $1/2$ each.

With Assumption \ref{A: continuous distribution} replaced by Assumption~\ref{A: general distribution}, we can prove the no-gaps delocalization result, which we summarize in the following theorem.

\begin{theorem}[Delocalization: general distributions]	\label{thm: delocalization general}
  Let $A$ be an $n \times n$ random matrix which satisfies Assumptions \ref{A} and \ref{A: general distribution}.
  Let $M \ge 1$.
  Let $\e \ge 1/n$ and $s \ge c_1 \e^{-7/6} n^{-1/6} +e^{-c_2/\sqrt{\e}}$.
  Then,  the following event holds with probability
  at least
  \[
    1-(C s)^{\e n}-\P(\BB_{A,M}^c).
  \]
  Every eigenvector $v$ of $A$ satisfies
  $$
  \|v_I\|_2 \ge (\e s)^6 \|v\|_2 \quad \text{for all } I \subset [n], \; |I| \ge \e n.
  $$
  Here $c_k = c_k(p,K,M)>0$ for $k=1, 2$ and $C = C(p,K,M) \ge 1$.
\end{theorem}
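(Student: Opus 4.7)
The plan is to follow the overall skeleton of the proof of Theorem~\ref{thm: delocalization continuous}, replacing the small-ball estimate from bounded density with the structural small-ball bound available for arbitrary distributions satisfying \eqref{eq: xi}. First I would reduce the delocalization statement to a small-ball event on independent random rows. If the conclusion fails then there is a unit eigenvector $v$ with eigenvalue $\l$ and a subset $I\subset[n]$ with $|I|\ge\e n$ for which $\|v_I\|_2<(\e s)^6$. Set $J=I^c$ and $w=v_J/\|v_J\|_2$; as long as $(\e s)^6\le 1/\sqrt 2$ we have $\|v_J\|_2\ge 1/\sqrt 2$. On $\BB_{A,M}$ we have $|\l|\le M\sqrt n$, and the eigenvalue equation on rows $i\in I$ rearranges to
\[
\Bigl|\sum_{j\in J} A_{ij}w_j - \mu_i\Bigr| \;\le\; CM\sqrt n\,(\e s)^6 \qquad(i\in I),
\]
where the offset $\mu_i$ depends on $\l$, $v_I$, and $(A_{ij})_{j\in I}$, but not on $(A_{ij})_{j\in J}$. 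By Assumption~\ref{A} the entries $(A_{ij})_{i\in I,\,j\in J}$ are jointly independent, so after conditioning on everything else the event above has probability at most $\mathcal{L}(w,\,CM\sqrt n\,(\e s)^6)^{|I|}$, where $\mathcal{L}(w,t)=\sup_u\Pr{|\sum_j w_j\xi_j-u|\le t}$ and the $\xi_j$ are i.i.d.\ copies of $\xi$.

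The hardest step is then establishing the required small-ball estimate on $\mathcal{L}(w,t)$. In the continuous case, bounded density yields $\mathcal{L}(w,t)\le CKt$ regardless of $w$; here we only have the single-atom bound $\sup_u\Pr{|\xi-u|\le 1}\le 1-p$ from~\eqref{eq: xi}, and the L\'evy concentration function depends essentially on the arithmetic structure of $w$. I would use the standard dichotomy, splitting the unit sphere of $\R^J$ into compressible vectors (those $\d$-close to some $\rho n$-sparse unit vector) and incompressible ones. Compressible $w$'s form a low-entropy set and are handled by a direct union bound using the concentration estimate of~\eqref{eq: xi}. For incompressible $w$'s, Esseen's inequality combined with the essential least common denominator machinery of Rudelson--Vershynin type gives a bound of the form
\[
\mathcal{L}(w,t) \;\le\; C\bigl(t + n^{-1/2}\bigr) + e^{-c\sqrt n},
\]
and restricting the sphere to a Euclidean neighborhood of a fixed direction at scale $\sim\sqrt\e$ sharpens the tail to $e^{-c_2/\sqrt\e}$. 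Producing both the Berry--Esseen-like $n^{-1/2}$ scaling and the $e^{-c/\sqrt\e}$ tail for general, possibly atomic distributions, in a form strong enough to propagate through a product of $\ge\e n$ rows, is the main technical work.

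In the final step I would assemble the estimate via an $\e$-net argument. Discretize $\l\in[-M\sqrt n,M\sqrt n]$ on a net of size $O(n)$, take a $(\e s)$-net on the unit sphere of $\R^J$ of cardinality $\bigl(C/(\e s)\bigr)^{|J|}$, and sum over the $\binom{n}{|I|}\le(e/\e)^{\e n}$ choices of $I$. Raising the single-row small-ball bound to the power $|I|\ge\e n$ and multiplying by these entropy factors produces the claimed probability $(Cs)^{\e n}$, provided $s$ dominates both the Berry--Esseen-type contribution (producing the $c_1\e^{-7/6}n^{-1/6}$ summand) and the LCD tail (producing the $e^{-c_2/\sqrt\e}$ summand); these are exactly the two summands in the hypothesis on $s$. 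The rest is a routine adaptation of the continuous-case bookkeeping.
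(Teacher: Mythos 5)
There is a genuine gap, and it sits at the heart of your strategy rather than in the bookkeeping. First, the conditioning step as stated is not valid: the eigenvector $v$ (hence $w=v_J/\|v_J\|_2$), the eigenvalue $\l$, and the offsets $\mu_i$ all depend on the very entries $(A_{ij})_{i\in I,\,j\in J}$ whose randomness you want to exploit, so ``conditioning on everything else'' does not freeze $w$; worse, under Assumption~\ref{A} the conditioning includes the symmetric partners $A_{ji}$, which in the symmetric case determine $(A_{ij})_{i\in I,\,j\in J}$ completely, making the conditional small-ball probability trivial. The only way to repair this is the decoupling you invoke at the end --- a union bound over a net of candidate vectors $w$ --- and that is exactly where the argument breaks down quantitatively. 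Your probability gain comes solely from the $|I|\ge \e n$ independent rows, so for each fixed $w$ and $\l_0$ the bound is at best of the form $(C(\cdot))^{\e n}$, while a net on the unit sphere of $\C^{J}$ has cardinality exponential in the real dimension $2(1-\e)n$; moreover, to beat the approximation error $\|A\|\cdot(\text{mesh})\le M\sqrt n\,(\text{mesh})$ against the radius $\sim M\sqrt n\,(\e s)^6$ the mesh must be $\sim(\e s)^6$, not $\e s$, which makes the entropy even larger. For any $\e$ bounded away from $1$ --- and the theorem is interesting precisely for small $\e$ --- the union bound diverges. This is exactly the obstacle the notes flag in the discussion of the $\e$-net argument: the net lives in dimension comparable to $n$ while the independent rows number only $\e n$.

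The actual proof (given in the cited work of Rudelson and Vershynin and sketched here for the continuous case) avoids any net over the full sphere. After the reduction of Proposition~\ref{prop: reduction}, one bounds $s_{\min}\big((A-\l_0)_{I^c}\big)$ by splitting the tall matrix into a block $B$ of size $(1-\e/2)n\times(1-\e)n$ and an independent block $G$ of $\e n/2$ fully independent rows; the negative second moment identity and lower bounds on the distances $\dist(B_j',H_j')$ show that $B$ is well invertible off a random exceptional subspace $E^-$ of dimension $\le \e' m/100$, and only on that low-dimensional subspace does one run a net argument with $G$, so the entropy matches the $\e n/2$ independent rows. The Littlewood--Offord/LCD machinery you cite is indeed the hard new ingredient in the general-distribution case, but it enters in the distance estimates (small-ball probabilities for projections of a row of $B$ onto the codimension-$(m-1)$ complement $E_j=(H_j')^\perp$, whose arithmetic structure must be shown to be generic), not as per-row bounds tested against a net of the whole sphere. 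As written, your skeleton cannot be pushed through to the claimed probability $(Cs)^{\e n}$.
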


\begin{remark}  \label{rem: shift}
The proof of Theorem \ref{thm: delocalization general} presented in \cite{RV no-gaps} can be modified to allow an extension to random matrices shifted by a constant multiple of the all ones matrix $\mathbf{1}_n$. More precisely, for a given $\mu \in \C$, the event discribed in the theorem holds with probability at least $1-(C s)^{\e n}-\P(\BB_{A- \mu \mathbf{1}_n,M}^c)$. This allows to consider random matrices with Bernoulli$(p)$ entries for $p$ being a constant. Moreover, tracing the proof appearing in \cite{RV no-gaps}, one can see that the constants $c_k$ and $C$ depend polynomially on $p$, which allows to extend no-gaps  delocalization to matrices with i.i.d. Bernoulli entries for $p=\Omega(n^{-c'})$ for some absolute constant $c' \in (0,1)$.
\end{remark}

\begin{remark}
 The no-gaps delocalization phenomenon holds also for any unit vector which is a linear combination of eigenvectors whose eigenvalues are not too far apart, see Remark \ref{rem: approximate eigenvectors}  for the details.
\end{remark}

\subsection*{Acknowledgement}
These notes are based on the mini-courses given  at Hebrew University of Jerusalem and at PCMI Summer School on Random Matrices. The author is grateful to Alex Samorodnitsky, Alexey Borodin, Ivan Corwin, and Alice Guionnet for their hospitality and an opportunity to present this material. The author is also grateful to Feng Wei for for running problem sessions at PCMI which were an integral part of the mini-course and for careful reading of the manuscript.

\section{Reduction of no-gaps delocalization to invertibility of submatrices}

\subsection{From no-gaps delocalization to the smallest singular value bounds}
The first step in proving no-gaps delocalization is pretty straightforward. Let us consider the toy case when there exists a unit eigenvector $u$ of the matrix $A$ with $u_j=0$ for all $j \in J$, where $J$ is some subset of $[n]$. If we denote the corresponding eigenvalue by $\l$ and the submatrix of $A$ with columns from the set $J^c$ by $A_{J^c}$, then we have that
$(A_{J^c} - \l I_{J^c}) u_{J^c}=0$ so the kernel of $A_{J^c} - \l I_{J^c}$ is non-trivial.
Here, $A_{J^c} - \l I_{J^c}$ is a ``tall'' matrix with the number of rows larger than the number of columns. A linear operator defined by a tall rectangular random matrix with sufficiently many independent entries is an injection with high probability. This means that the event that the probability of this ``toy'' case should be small. This idea is not directly applicable since the random eigenvalue $\l$ depends on all entries of the matrix $A$, but this this obstacle is easy to circumvent by discretizing the set of plausible values of $\l$ and considering a deterministic $\l$ from this discretization. If the probability that $A_{J^c} - \l I_{J^c}$ is close to a singular matrix is small for any fixed $\l$, we can use the union bound over the dicretisation along with approximation to show that, with high probability, the matrix $A_{J^c} - \l I_{J^c}$ has a trivial kernel for all $\l$ from this plausible set simultaneously. This would imply the same statement for a random $\l$ allowing us to avoid using hard to obtain information about its distribution except for a very rough bound defining the plausible set.

To implement this idea for a real setup, recall the definition of the singular values of a matrix. Let $B$ be a real or complex $N \times n$ matrix, $N \ge n$. The singular values of $B$ are defined as the square roots of eigenvalues of $B^* B$ arranged in the decreasing order:
\[
 s_1(B) \ge s_2(B) \ge \ldots \ge s_n(B) \ge 0.
\]
If $B$ is real, and we consider this matrix as a linear operator $B:\R^n \to \R^N$, then the image of the Euclidean unit ball will be an ellipsoid whose semi-axes have lengthes $s_1(B) \etc s_n(B)$. The extreme singular values have also an analytic meaning with
\begin{align*}
 s_1(B)& = \max_{x \in S^{n-1}} \norm{Bx}_2 \quad \text{and} \\
 s_n(B)&=\min_{x \in S^{n-1}} \norm{Bx}_2,
\end{align*}
so $s_1(B)=\norm{B}$ -- the operator norm of $B$, and $s_n(B)$ is the distance from $B$ to the set of matrices of a rank smaller than $n$ in the  operator norm.
Throughout these notes, we will also denote the smallest singular value by $s_{\min}(B)$.
We will also abbreviate $A- \l I$ to $A-\l$.

Let us introduce the event that one of the eigenvectors is localized. Define the \emph{localization event} by
\[
\text{Loc}(A, \e, \d) := \left\{ \exists \text{ eigenvector } v \in S_{\C}^{n-1}, \, \exists I \subset [n], \; |I| = \e n:
\|v_I\|_2 < \d \right\}.
\]
Since we assume in Theorem~\ref{thm: delocalization continuous} that the boundedness
event $\BB_{A,M}$ holds with probability at least $1/2$, the conclusion of that theorem can be stated as follows:
\begin{equation}         \label{eq: PLB}
\Pr{ \text{Loc}(A, \e, (\e s)^6) \text{ and } \BB_{A,M} } \le (cs)^{ \e  n}.
\end{equation}
The following proposition reduces proving a delocalization result like \eqref{eq: PLB} to an invertibility bound.

\begin{proposition}[Reduction of delocalization to invertibility]			\label{prop: reduction}
  Let $A$ be an $n \times n$ random matrix with arbitrary distribution.
  Let $M \ge 1$ and $\e, p_0, \d \in (0,1/2)$.
  Assume that for any number $\l_0 \in \C$, $|\l_0| \le M \sqrt{n}$,
  and for any set $I \subset [n]$, $|I| = \e n$, we have
  \begin{equation}         \label{eq: invertibility assumption}
  \Pr{s_{\min} \big( (A - \l_0)_{I^c} \big) \le 8 \d M \sqrt{n} \text{ and } \BB_{A,M} } \le p_0.
  \end{equation}
  Then
  $$
  \Pr{ \text{Loc}(A, \e, \d) \text{ and } \BB_{A,M} } \le 5\d^{-2} (e/\e)^{\e n} p_0.
  $$
\end{proposition}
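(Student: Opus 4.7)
The plan is to reduce the localization event to a statement about an explicit submatrix $(A-\l)_{I^c}$, then de-randomize the eigenvalue $\l$ by a net argument in the complex disk of radius $M\sqrt{n}$, and conclude with a union bound over the net and over the admissible subsets.

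First I would unpack the localization event. Suppose $\text{Loc}(A,\e,\d) \cap \BB_{A,M}$ holds, witnessed by a unit eigenvector $v$ with eigenvalue $\l$ and a set $I$ of size $\e n$ with $\norm{v_I}_2 < \d$. On $\BB_{A,M}$ one has $|\l| \le \norm{A} \le M\sqrt{n}$, hence $\norm{A-\l} \le 2M\sqrt{n}$. Splitting the identity $(A-\l)v=0$ according to the columns in $I$ and $I^c$ yields $(A-\l)_{I^c} v_{I^c} = -(A-\l)_I v_I$, whose right-hand side has norm at most $2M\sqrt{n}\,\d$. Since $\d < 1/2$ we also have $\norm{v_{I^c}}_2 \ge \sqrt{1-\d^2} \ge 1/2$, so
\[
  s_{\min}\bigl((A-\l)_{I^c}\bigr) \le \frac{\norm{(A-\l)_{I^c} v_{I^c}}_2}{\norm{v_{I^c}}_2} \le 4\d M\sqrt{n}.
\]

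Next I would discretize $\l$. Let $\NN$ be a $(4\d M\sqrt{n})$-net of the complex disk $\{|\l| \le M\sqrt{n}\}$; a standard volumetric estimate gives $|\NN| \le 5\d^{-2}$. Choose $\l_0 \in \NN$ with $|\l-\l_0| \le 4\d M\sqrt{n}$. The matrix $(A-\l_0)_{I^c}-(A-\l)_{I^c}$ consists of columns of $(\l-\l_0) I$ and so has operator norm at most $|\l-\l_0|$, and the Weyl-type perturbation inequality for the smallest singular value yields
\[
  s_{\min}\bigl((A-\l_0)_{I^c}\bigr) \le s_{\min}\bigl((A-\l)_{I^c}\bigr) + |\l-\l_0| \le 8\d M\sqrt{n}.
\]
Thus $\text{Loc}(A,\e,\d) \cap \BB_{A,M}$ is contained in the union, over $\l_0 \in \NN$ and over subsets $I \subset [n]$ with $|I|=\e n$, of the events appearing in hypothesis \eqref{eq: invertibility assumption}. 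A union bound with $|\NN| \le 5\d^{-2}$ and $\binom{n}{\e n} \le (e/\e)^{\e n}$ then delivers the advertised bound $5\d^{-2}(e/\e)^{\e n} p_0$. The only piece requiring any real care is the bookkeeping of numerical constants, so that the factor $8$ in the hypothesis and the factor $5$ in the conclusion match up; everything else is the eigenvector identity plus a standard net, and no probabilistic input beyond the union bound enters at this stage.
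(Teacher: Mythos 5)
Your proposal is correct and follows essentially the same route as the paper: the eigenvector identity $(A-\l)_{I^c}v_{I^c}=-(A-\l)_I v_I$ on $\BB_{A,M}$, a net of cardinality at most $5\d^{-2}$ in the disc $\{|\l_0|\le M\sqrt{n}\}$, and a union bound over $\l_0\in\NN$ and the $\binom{n}{\e n}\le (e/\e)^{\e n}$ subsets $I$. The only (harmless) difference is bookkeeping: you pass to $s_{\min}$ before discretizing and absorb $|\l-\l_0|$ via a Weyl-type perturbation with a $4\d M\sqrt{n}$-net, whereas the paper keeps the vector inequality, uses a $2\d M\sqrt{n}$-net, and divides by $\|v_{I^c}\|_2\ge 1/2$ at the end; both land on the same threshold $8\d M\sqrt{n}$.
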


\begin{proof}
Assume that both the localization event and the boundedness event $\BB_{A,M}$ occur.
Using the definition of $\text{Loc}(A, \e, \d)$, choose a localized eigenvalue-eigenvector pair $(\l, v)$
and an index subset $I$.
Decomposing the eigenvector as
$$
v = v_I + v_{I^c}
$$
and multiplying it by $A-\l$, we obtain
\begin{equation} \label{eq: precise eigenvalue}
0 = (A-\l) v = (A-\l)_I v_I + (A-\l)_{I^c} v_{I^c}.
\end{equation}
By triangle inequality, this yields
$$
\|(A-\l)_{I^c} v_{I^c}\|_2
= \|(A-\l)_I v_I\|_2
\le (\|A\| + |\l|) \|v_I\|_2.
$$
By the localization event $\text{Loc}(A, \e, \d)$, we have $\|v_I\|_2 \le \d$.
By the boundedness event $\BB_{A,M}$ and since $\l$ is an eigenvalue of $A$,
we have $|\l| \le \|A\| \le M \sqrt{n}$. Therefore
\begin{equation}         \label{eq: A-l v}
\|(A-\l)_{I^c} v_{I^c}\|_2 \le 2M \d \sqrt{n}.
\end{equation}

This happens for some $\lambda$ in the disc $\{z \in \C: |z| \le M \sqrt{n}\}$.
We will now run a covering argument in order to fix $\lambda$.
Let $\NN$ be a $(2M \d \sqrt{n})$-net of that disc.
One can construct $\NN$ so that
$$
|\NN| \le \frac{5}{\d^2}.
$$
Choose $\l_0 \in \NN$ so that $|\l_0 - \l| \le 2M \d \sqrt{n}$. By \eqref{eq: A-l v}, we have
\begin{equation}         \label{eq: A-l0 v}
\|(A-\l_0)_{I^c} v_{I^c}\|_2 \le 4M \d \sqrt{n}.
\end{equation}
Since $\|v_I\|_2 \le \d \le 1/2$, we have $\|v_{I^c}\|_2 \ge \|v\|_2 - \|v_I\|_2 \ge 1/2$.
Therefore, \eqref{eq: A-l0 v} implies that
\begin{equation}         \label{eq: smin A-l0}
s_{\min}((A - \l_0)_{I^c}) \le 8M \d \sqrt{n}.
\end{equation}

Summarizing, we have shown that the events $\text{Loc}(A, \e, \d)$ and $\BB_{A,M}$ imply the
existence of a subset $I \subset [n]$, $|I| = \e n$, and a number $\l_0 \in \NN$, such that
\eqref{eq: smin A-l0} holds.
Furthermore, for fixed $I$ and $\l_0$, assumption \eqref{eq: invertibility assumption} states that
\eqref{eq: smin A-l0} together with $\BB_{A,M}$ hold with probability at most $p_0$.
So by the union bound we conclude that
$$
\Pr{ \text{Loc}(A, \e, \d) \text{ and } \BB_{A,M} } \le \binom{n}{\e n} \cdot |\NN| \cdot p_0
\le \Big(\frac{e}{\e} \Big)^{\e n} \cdot \frac{5}{\d^2} \cdot p_0.
$$
This completes the proof of the proposition.
\end{proof}

\begin{remark} \label{rem: approximate eigenvectors}
A simple analysis of the proof of Proposition \ref{prop: reduction} shows that it holds not only for eigenvectors of the matrix $A$, but for its \emph{approximate} eigenvectors as well. Namely, instead of the event $\text{Loc}(A,\e,\d)$ one can consider the following event
\begin{multline*}
   \widetilde{\text{Loc}}(A, \e, \d)
   := \left\{ \exists v \in S_{\C}^{n-1}, \ \exists \l \in \C \ |\l| \le M \sqrt{n} \  \exists I \subset [n], \ |I| = \e n: \right .\\
    \left . \norm{(A - \l I) v}_2 \le M \d \sqrt{n} \text{ and } \norm{v_I}_2 < \d \right\}.
\end{multline*}
This event obeys the same conclusion as $\text{Loc}(A, \e, \d)$:
  $$
  \Pr{ \widetilde{\text{Loc}}(A, \e, \d) \text{ and } \BB_{A,M} } \le 5\d^{-2} (e/\e)^{\e n} p_0.
  $$
Indeed, equation \eqref{eq: precise eigenvalue} is replaced by
\[
w = (A-\l) v = (A-\l)_I v_I + (A-\l)_{I^c} v_{I^c},
\]
where $w$ is a vector of a norm not exceeding $M \d \sqrt{n}$. This in turn results in replacing $2M \d \sqrt{n}$ by $3M \d \sqrt{n}$ in \eqref{eq: A-l v} and $3M \d \sqrt{n}$ by $4M \d \sqrt{n}$ in \eqref{eq: A-l0 v}. This observation shows, in particular,  that the no-gaps delocalization phenomenon holds for any unit vector which is a linear combination of eigenvectors whose eigenvalues are at most $M \d \sqrt{n}$ apart.

\end{remark}

\subsection{The $\e$-net argument.}
We have reduced the proof of the no-gaps  delocalization to establishing quantitative invertibility of a matrix whose  number of rows is larger than  number of columns. This problem has been extensively studied, so before embarking on the real proof, let us check whether we can apply an elementary bound based on the discretization of the sphere. Assume for simplicity that all entries of the matrix $A$ are real and independent, and the entries are centered and of the unit variance.
 We will formulate the result in a bigger generality than we need at this moment.
\begin{lemma}  \label{l: small subspace}
 Let $M>0$ and let $A$ be an $m \times n$ matrix with real independent entries $A_{i,j}$ satisfying
 \[
  \E a_{i,j}=0, \quad  \E a_{i,j}^2=1, \quad \text{and} \quad \E a_{i,j}^4 \le C.
 \]
  Let $E$ be a linear subspace of $\R^n$ of dimension
 \[
  k=\text{dim}(E)< c\frac{m}{\log(2+n/m)}.
 \]
   Then with probability at least $1-\exp(-c'm)-\Pr{\BB_{A,M}^c}$, all vectors $x \in E$ satisfy
 \[
  \norm{Ax}_2 \ge c \sqrt{m}.
 \]
\end{lemma}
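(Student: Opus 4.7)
The plan is to run a standard $\e$-net argument on the unit sphere of $E$. Since $\dim E = k$ is small compared to $m$, that sphere is covered by roughly $(C/\e)^k$ points, so a fixed-direction small-ball bound for $\norm{Ax}_2$ combined with a union bound and an approximation step using $\BB_{A,M}$ will control the infimum of $\norm{Ax}_2$ over $E$.

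The first step is a small-ball estimate at a fixed $x \in S^{n-1} \cap E$. Writing the rows of $A$ as $A_1 \etc A_m$, we have $\norm{Ax}_2^2 = \sum_{i=1}^m \pr{A_i}{x}^2$, a sum of independent nonnegative terms each with mean $1$. The fourth-moment hypothesis $\E a_{i,j}^4 \le C$ together with $\norm{x}_2 = 1$ give $\E \pr{A_i}{x}^4 \le C'$ by direct expansion, so the Paley--Zygmund inequality applied to $Y = \pr{A_i}{x}^2$ yields $\Pr{|\pr{A_i}{x}| \ge 1/2} \ge c_0$ for some constant $c_0 > 0$ independent of $x$ and $i$. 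A Chernoff bound applied to the independent indicators $\mathbf{1}_{\{|\pr{A_i}{x}| \ge 1/2\}}$ then gives
$$
\Pr{\norm{Ax}_2 \le c_1 \sqrt{m}} \le \exp(-c_2 m).
$$

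I would then fix an $\e$-net $\NN \subset S^{n-1} \cap E$ of cardinality $|\NN| \le (3/\e)^k$ and union-bound the preceding estimate over $\NN$. To go from $\NN$ to $S^{n-1} \cap E$, for each $x$ pick $y \in \NN$ with $\norm{x-y}_2 \le \e$; on the event $\BB_{A,M}$,
$$
\norm{Ax}_2 \ge \norm{Ay}_2 - \norm{A} \cdot \e \ge c_1 \sqrt{m} - M \sqrt{n}\,\e.
$$
Choosing $\e = c_1 \sqrt{m}/(2M\sqrt{n})$ yields $\norm{Ax}_2 \ge (c_1/2)\sqrt{m}$ for every $x$ on the sphere of $E$, and homogeneity extends the inequality to all of $E$.

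The point that needs calibrating is that the entropic cost of the net must be absorbed by the exponential small-ball gain. With the above choice of $\e$,
$$
\left( \frac{3}{\e} \right)^{k} \exp(-c_2 m) = \exp\bigl( k \log(6M \sqrt{n/m}/c_1) - c_2 m \bigr),
$$
which is at most $\exp(-c_3 m)$ precisely when $k \le c\, m / \log(2 + n/m)$, the ``$2+$'' absorbing the regime $n \lesssim m$ where the logarithm is $O(1)$. This matches the stated hypothesis on $k$, and the total failure probability is bounded by $\exp(-c_3 m) + \P(\BB_{A,M}^c)$ as claimed.
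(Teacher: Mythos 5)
Your proposal is correct and follows essentially the same route as the paper: a fixed-vector small-ball bound via Paley--Zygmund and Chernoff, a union bound over an $\e$-net of $S^{n-1} \cap E$ of size exponential in $k$, and an approximation step on $\BB_{A,M}$ with $\e \asymp \sqrt{m/n}/M$ so that the net entropy $k\log(1+2/\e)$ is absorbed by the $\exp(-c_2 m)$ gain under the hypothesis $k \le c\,m/\log(2+n/m)$. Your explicit verification of $\E\pr{A_i}{x}^4 \le C'$ is a minor elaboration of a step the paper leaves implicit, not a different argument.
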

 The proof of Lemma \ref{l: small subspace} is based on the $\e$-net argument.
To implement it, we need an elementary lemma.
\begin{lemma} \label{l: eps-net}
 Let $\e\in (0,1]$ and let $V \subset S_{\R}^{k-1}$ be any set. The set $V$ contains an $\e$-net of cardinality at most $(1+2/\e)^k$.
\end{lemma}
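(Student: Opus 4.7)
The plan is to use the standard volumetric packing argument. I would first reformulate the conclusion in terms of an $\e$-separated subset of $V$: any maximal $\e$-separated set $N \subseteq V$ (meaning any two distinct points of $N$ are at distance at least $\e$, and no point of $V \setminus N$ can be added while preserving this property) is automatically an $\e$-net of $V$. Indeed, if some $v \in V$ had distance $\ge \e$ to every point of $N$, then $N \cup \{v\}$ would still be $\e$-separated, contradicting maximality. So the task reduces to bounding the cardinality of any $\e$-separated subset of $V$.

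Next, I would run the volumetric argument. The open Euclidean balls $B(x, \e/2) \subset \R^k$ centered at the points $x \in N$ are pairwise disjoint, because any two centers are at distance at least $\e$. Since $N \subseteq V \subseteq S_{\R}^{k-1}$, each such ball lies in the origin-centered ball $(1+\e/2) B_2^k$, by the triangle inequality. Comparing Lebesgue volumes in $\R^k$ then yields
\[
|N| \cdot (\e/2)^k \operatorname{vol}(B_2^k) \le (1+\e/2)^k \operatorname{vol}(B_2^k),
\]
so $|N| \le (1 + 2/\e)^k$.

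To finish, I would construct $N$ concretely. Since the volume bound shows that every $\e$-separated subset of $V$ has cardinality at most $(1+2/\e)^k$, a maximal $\e$-separated set exists and is finite: start with any point of $V$ (assuming $V$ nonempty; the empty case is trivial) and repeatedly adjoin any element of $V$ at distance $\ge \e$ from all previously chosen ones. This procedure must terminate after at most $(1+2/\e)^k$ steps by the volume bound, and when it terminates the resulting set is maximal, hence an $\e$-net contained in $V$ with the required cardinality bound.

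The argument is essentially routine and I do not anticipate a genuine obstacle; the only thing to be careful about is ensuring the net is contained in $V$ itself (as demanded by the statement), which is exactly why we select the net points from $V$ rather than from the ambient sphere.
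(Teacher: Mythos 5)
Your proof is correct, and it is the standard volumetric argument (maximal $\e$-separated subset, disjoint balls of radius $\e/2$ inside $(1+\e/2)B_2^k$, volume comparison), which is exactly what is intended here: the paper states the lemma as elementary and omits the proof. The one point that needed care --- that the net consists of points of $V$ itself --- you handle correctly by selecting the separated set inside $V$.
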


\begin{proof}[Proof of Lemma \ref{l: small subspace}]
  Let $\e>0$.
  It is enough to prove the norm bound for all vectors of $V:=E \cap S^{n-1}$. Since the dimension of $E$ is $k$, this set admits an $\e$-net $\NN$ of cardinality $(1+2/\e)^k$. Let $y \in \NN$, and let $z_j=(Ay)_j$ be the $j$-th coordinate of the vector $Ay$.

  The Paley--Zygmund inequality asserts that a random variable $Y \ge 0$ satisfies
   \[
    \Pr{Y>t} \ge \frac{(\E Y-t)^2}{\E Y^2} \quad \text{for any } t\in (0, \E Y).
   \]
   If $Y=z_j^2$, the assumptions on $a_{i,j}$ imply $\E Y=1$ and $\E Y^2 \le C'$.
   Applying the Paley--Zygmund inequality with $t=1/2$, we conclude
   that $\Pr{|z_j| \ge 1/2} \ge c$. Using Chernoff's inequality, we derive that
  \begin{align}  \label{eq: Hoeffding}
    \Pr{\norm{Ay}_2 \le \frac{1}{4} \sqrt{m} }
    &= \Pr {\sum_{j=1}^m |z_j|^2 \le \frac{1}{16} m }  \notag \\
    &\le  \left( |\{j: \ |z_j| \le 1/2 \}| \ge \frac{m}{2} \right)
    \le \exp(-c_2 m).
  \end{align}
  In combination with the union bound, this yields
  \begin{equation} \label{eq: sm-sub net}
    \Pr{\exists y \in \NN  \ \norm{Ay}_2 \le (1/4) \sqrt{m}}
    \le (1+2/\e)^k \exp(-c_2 m).
  \end{equation}
  Let $\Om$ be the event that $\norm{Ay}_2 > (1/4) \sqrt{m}$ for all $y \in \NN$ intersected with $\BB_{A,M}$. Assuming that $\Om$ occurs, we will show that the matrix is invertible on the whole $V$. To this end, take any $x \in V$, and find $y \in \NN$ such that $\norm{x-y}_2 <\e$. Then
  \[
   \norm{Ax}_2 \ge \norm{Ay}_2- \norm{A} \cdot \norm{x-y}_2
   \ge \frac{1}{4} \sqrt{m} -  M \sqrt{n} \cdot \e
   \ge \frac{1}{8} \sqrt{m}
  \]
  if we set
  \[
    \e= \frac{1}{8M} \cdot \sqrt{\frac{m}{n}} \wedge 1.
  \]
  It remains to estimate the probability that $\Om$ does not occur. By \eqref{eq: sm-sub net},
  \[
   \Pr{\Om^c} \le \exp(k \log (1+2/\e)-c_2 m) +\Pr{\BB_{A,M}^c}
   \le  \exp \left(-\frac{c_2}{2} m \right) +\Pr{\BB_{A,M}^c}
  \]
  if we choose
 \[
  k \le c \frac{m}{\log(2+n/m)}.
 \]
\end{proof}

Comparing the bound \eqref{eq: invertibility assumption} required to establish delocalization with the smallest singular value estimate of lemma \ref{l: small subspace}, we see several obstacles preventing the direct use of the $\e$-net argument.

\subsubsection*{Lack of independence}
    As we recall from Assumption~\ref{A}, we are looking for ways to control symmetric and
    non-symmetric matrices simultaneously.
    This forces us to consider random matrices with dependent entries making Chernoff's inequality unapplicable.

\subsubsection*{Small exceptional probability required}
Lemma \ref{l: small subspace}  provides the smallest singular value bound for rectangular matrices whose number of rows is significantly greater than the number of columns. If we are to apply it in combination with Proposition \ref{prop: reduction}, we would have to assume in addition that $\e> 1-\e_0$ for some small $\e_0<1$. Considering smaller values of $\e$ would require a small ball probability bound better than \eqref{eq: Hoeffding} that we used in the proof. We will show that such bound is possible to obtain in the case when the entries have a bounded density. In the general case, however, such bound is unavailable. Indeed, if the entries of the matrix may take the value 0 with a positive probability, then $\P(Ae_1=0)=\exp(-cm)$, which shows that the bound \eqref{eq: Hoeffding} is, in general, optimal. Overcoming this problem for a general distribution would require a delicate stratification of the unit sphere according to the number-theooretic structure of the coordinates of a vector governing the small ball probability bound.

A closer look at Proposition \ref{prop: reduction} demonstrates that the demands for a small ball probability bound are even higher.
We need that the delocalization result, and thus
    the invertibility bound \eqref{eq: A-l0 v}, hold uniformly over all index subsets $I$\
    of size $\e n$.
    Since there are $\binom{n}{\e n} \sim \e^{-\e n}$ such sets, we would need
    the probability  in \eqref{eq: invertibility assumption} to be at most $\e^{\e n}$.
    Such small exceptional probabilities (smaller than $e^{-\e n}$)
    are hard to achieve in  the general case.

\subsubsection*{Complex entries}
    Even if the  original matrix is real, its eigenvalues may be complex. This observation forces us to
     work with complex random matrices. Extending the known invertibility results to complex matrices
    poses two additional challenges. First, in order to preserve the
    matrix-vector multiplication, we replace a complex $n \times m$ random matrix $B = R + iT$
    by the real $2m \times 2n$ random matrix $\left[ \begin{smallmatrix} R & -T \\ T & R \end{smallmatrix} \right]$.
   The real and imaginary parts $R$ and $T$ each appear twice in this matrix,
   which causes extra dependencies of the entries.
   Besides that, we encounter a major problem while trying to apply the $\e$-net argument to prove the smallest singular value bound.
   Indeed, since we have to consider a real $2m \times 2n$ matrix,
   we will have to construct a net in a subset of the real sphere of dimension $2n$. The size of such net is exponential in the dimension. On the other hand, the number of independent rows of $R$ is only $m$, so the small ball probability will be exponential in terms of $m$. If $m<2n$, the union bound would not be applicable.

Each of these obstacles requires a set of rather advanced tools to deal with in general case, i.e. under Assumption \ref{A: general distribution}. Fortunately, under Assumption \ref{A: continuous distribution}, these problems can be addressed in a much easier way allowing a short and rather non-technical proof. For this reason, we are going to concentrate on the continuous  density case below.

\section{Small ball probability for the projections of random vectors}

\subsection{Density of a marginal of a random vector.}
The proof of the no-gaps delocalization theorem requires a result on the distribution of the marginals of a random vector which is of an independent interest. To simplify the presentation, we will consider a vector with independent coordinates having a bounded density. Let $X=(X_1 \etc X_n)$ be independent real valued random variables with densities $f_{X_1} \etc f_{X_n}$ satisfying
\[
  f_{X_j}(t) \le K \quad \text{for all } j \in [n], \ t \in \R.
\]
The independence implies that the density of the vector is the product of the densities of the coordinates, and so, $f_X(x) \le K^n$ for all $x \in \R^n$. Obviously, we can extend the previous observation to the coordinate projections of $X$ showing that $f_{P_J X}(y) \le K^{|J|}$ for any set $J \subset [n]$ and any $y \in \R^J$ with $P_J$ standing for the coordinate projection of $\R^n$ to $\R^J$. It seems plausible that the same property should be shared by the densities of all orthogonal projections to subspaces $E \subset \R^n$ with the dimension of $E$ playing the role of $|J|$. 
Yet, a simple example shows that this statement fails even in dimension 2. Let $X_1,X_2$ be random variables uniformly distributed  on the interval $[-1/2,1/2]$, and consider the projection on the subspace $E \subset \R^2$ spanned by the vector $(1,1)$. Then $Y=P_E X$ is the normalized sum of the coordinates of $X$:
\[
 P_Y= \frac{\sqrt{2}}{2} \left(X_1+X_2\right).
\]
A direct calculation shows that $f_Y(0)=\sqrt{2}>1$. A delicate result of Ball \cite{B1} shows that this is the worst case for the uniform distribution. More precisely, consider a vector $X \in \R^n$ with i.i.d. coordinates uniformly distributed in the interval $[-1/2,1/2]$. Then the projection of $X$ onto any one-dimensional subspace $E=\text{span}(a)$ with $a=(a_1 \etc a_n) \in S^{n-1}$ is a weighted linear combination of the coordinates:
$P_E(X)= \sum_{j=1}^n a_j X_j$. The theorem of Ball asserts that the density of such linear combination does not exceed $\sqrt{2}$ making $a=(\sqrt{2}/2, \sqrt{2}/2, 0 \etc 0)$ the worst sequence of weights. This result can be combined with a theorem of Rogozin claiming that the density of a linear combination of independent random variables increases increases the most if these variables are uniformly distributed. This shows that if the coordinate of $X$ are independent absolutely continuous random variables having densities uniformly bounded by $K$, then the density of $Y= \sum_{j=1}^n a_j X_j$ does not exceed $\sqrt{2}K$ for any $a=(a_1 \etc a_n) \in S^{n-1}$.

Instead of discussing the proofs of the theorems of Ball and Rogozin, we will present here a simpler argument due to Ball and Nazarov \cite{BN} showing that the density of $Y$ is bounded by $CK$ for some unspecified absolute constant $C$. Moreover, we will show that this fact allows a multidimensional extension which we formulate in the following theorem \cite{RV small ball}.
\begin{theorem}[Densities of projections]					\label{thm: dens PX}
  Let $X= (X_1,\ldots, X_n)$ where
  $X_i$ are real-valued independent random variables.
  Assume that the densities of $X_i$ are bounded by $K$
  almost everywhere.
  Let $P$ be the orthogonal projection in $\R^n$ onto a $d$-dimensional subspace.
  Then the density of the random vector $PX$ is bounded by $(CK)^d$ almost everywhere.
\end{theorem}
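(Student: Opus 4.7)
The plan is to bound the density via Fourier inversion on the subspace $E$ and then to combine the one-dimensional Ball--Nazarov bound with a tensorization argument (most naturally, a rank-one geometric Brascamp--Lieb inequality) adapted to the Parseval frame structure of the projection.

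Parametrize $E$ by an isometric embedding $U:\R^d \to \R^n$ with $j$-th row $u_j$; orthonormality of the columns of $U$ gives $\sum_j u_j u_j^\top = I_d$, so the $u_j$ form a Parseval frame in $\R^d$ with $\sum_j \norm{u_j}^2 = d$. For $\xi = Ut \in E$ we have $P\xi = \xi$, and by independence of the coordinates of $X$,
\[
\widehat{f_Y}(Ut) = \E e^{i \pr{Ut}{X}} = \prod_{j=1}^n \widehat{f_{X_j}}(u_j \cdot t).
\]
By Fourier inversion,
\[
\norm{f_Y}_\infty \le (2\pi)^{-d} \int_{\R^d} \prod_{j=1}^n \bigl|\widehat{f_{X_j}}(u_j \cdot t)\bigr|\, dt,
\]
so the task reduces to bounding this integral by $(CK)^d$.

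The one-dimensional input is the Ball--Nazarov estimate: $\norm{f_{X_j}}_\infty \le K$ gives $\int f_{X_j}^2 \le K$, and Plancherel then yields $\int_\R |\widehat{f_{X_j}}|^2 \le 2\pi K$; combined with $|\widehat{f_{X_j}}| \le 1$ we get $\int |\widehat{f_{X_j}}|^p \le 2\pi K$ for every $p \ge 2$. In the case $d=1$ (where $E = \R a$, $a \in S^{n-1}$), one recovers the original Ball--Nazarov bound $\norm{f_Y}_\infty \le CK$ from Hölder's inequality with exponents $p_j = 1/a_j^2$ coordinated to the weights $a_j^2$ (so that $\sum 1/p_j = \sum a_j^2 = 1$). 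To promote this to general $d$ I would apply the rank-one geometric Brascamp--Lieb inequality, which is tailored to the Parseval frame identity: with $v_j = u_j/\norm{u_j}$ and weights $c_j = \norm{u_j}^2$ one has $\int \prod g_j(v_j \cdot t)^{c_j}\, dt \le \prod \norm{g_j}_1^{c_j}$, and specializing $g_j$ to a suitable power of the rescaled $|\widehat{f_{X_j}}|$ converts the product of 1D Plancherel bounds into a bound of order $K^{\sum c_j} = K^d$.

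The main obstacle lies in the dimensional bookkeeping. The Plancherel input $\int |\widehat{f_{X_j}}|^p \le 2\pi K$ is valid only for $p \ge 2$, forcing the BL exponent $1/c_j$ to satisfy $1/c_j \ge 2$, i.e.\ $\norm{u_j}^2 \le 1/2$. A handful of ``near-coordinate'' rows may violate this, and the rescaling Jacobians $\norm{u_j}^{-c_j}$ arising from the substitution used to write the integrand in geometric BL form must also be controlled uniformly in the geometry of $E$ rather than in $n$. I would handle both issues by induction on $d$: peeling off any row with $\norm{u_j}^2 > 1/2$ corresponds to a direction in $E$ nearly parallel to $e_j$, and conditioning on $X_j$ reduces the problem to a rank-$(d-1)$ projection on the orthogonal complement of $e_j$, where the inductive hypothesis applies. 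Once only small rows remain, each Jacobian $\norm{u_j}^{-\norm{u_j}^2}$ is bounded by a universal constant, the exponent condition $1/c_j \ge 2$ is admissible, and the product of 1D Plancherel bounds collapses to $(CK)^d$ as required.
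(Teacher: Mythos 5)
Your overall architecture (Fourier inversion along the subspace, factorization of the characteristic function over the frame rows $u_j$, geometric Brascamp--Lieb with exponents $c_j=\|u_j\|_2^2$, and induction on $d$ to peel off rows with $\|u_j\|_2^2>1/2$ by conditioning) is the same as the paper's, but your one-dimensional input is too weak, and this is a genuine gap. From $\|f_{X_j}\|_\infty\le K$, Plancherel plus $|\widehat{f_{X_j}}|\le 1$ only give $\int_\R|\widehat{f_{X_j}}|^{p}\,dx\le 2\pi K$ for all $p\ge 2$. Feeding this into the H\"older (for $d=1$) or Brascamp--Lieb step, the substitution produces
\[
\prod_{j}\Big(\tfrac{2\pi K}{\|u_j\|_2}\Big)^{\|u_j\|_2^2}
=(2\pi K)^{d}\,\exp\Big(\sum_{j}\|u_j\|_2^2\log\tfrac{1}{\|u_j\|_2}\Big),
\]
and the exponential factor is \emph{not} $C^d$: taking all $\|u_j\|_2=\sqrt{d/n}$ it equals $(n/d)^{d/2}$. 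Already for $d=1$ with $a_j=1/\sqrt n$ your bound yields $f_S(0)\lesssim K\sqrt n$ instead of $CK$. The remark that ``each Jacobian $\|u_j\|^{-\|u_j\|^2}$ is bounded by a universal constant'' does not rescue this: each factor is individually $O(1)$, but there are $n$ of them and their product grows with $n$, not with $d$. What the argument actually needs is the sharper estimate $\frac{1}{a_j}\int_\R|\phi_{X_j}(x)|^{1/a_j^2}\,dx\le C$, i.e.\ $\int|\phi_{X_j}|^{p}\le CK/\sqrt p$, which cancels the Jacobian exactly. This does not follow from Plancherel alone; one must show that the level sets near the maximum satisfy $\l\{x:|\phi_{X_j}(x)|>t\}\le C\sqrt{1-t^2}$ for $t\in[3/4,1]$, which is the Hal\'asz-type regularity argument (symmetrization, $\psi(mt)\le m^2\psi(t)$) of Lemma~\ref{l: char function decay}. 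That lemma is the analytic core of the Ball--Nazarov bound, and your sketch omits it.

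A secondary, smaller issue: in the inductive removal of rows with $\|u_j\|_2^2>1/2$ you cannot simply ``condition on $X_j$ and apply the inductive hypothesis to a rank-$(d-1)$ projection'' at the level of densities, since the density of $PX$ does not factor across that split. The paper replaces the density by the L\'evy concentration function and proves a quantitative tensorization lemma (Lemma~\ref{l: tensorization}) precisely so that the same constant $M$ survives the passage from rank $d-1$ to rank $d$; your step is repairable along these lines, but as written it hides where the constant is preserved.
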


This theorem shows that the density bound $K^d$ for coordinate projections holds also for general ones if we include a multiplicative factor depending only on the dimension.
Recently, Livshyts et al.~\cite{LPP} proved a multidimensional version of Rogozin's theorem. Combining it with the multidimensional version of Ball's theorem \cite{B2}, one can show that the optimal value of the constant $C$ is $\sqrt{2}$ as in the one-dimensional case.

\begin{proof}
We  will start the proof from the one-dimensional case. The proof in this case is a nice illustration of the power of characteristic functions approach in deriving the small ball and density estimates. As before, we restate the one-dimensional version of the theorem as a statement about the density of a linear combination.

\vskip 0.1in
\textbf{Step 1.}
 Linear combination of independent random variables.

  Let $X_1,\ldots, X_n$ be real-valued independent random variables
  whose densities are bounded by $K$ almost everywhere.
  Let $a_1,\ldots,a_n$ be real numbers with $\sum_{j=1}^n a_j^2 = 1$.
  Then the density of $\sum_{j=1}^n a_j X_j$ is bounded by
  $CK$ almost everywhere.
\vskip 0.05in
We begin with a few easy reductions.
By replacing $X_j$ with $KX_j$ we can assume that $K=1$.
By replacing $X_j$ with $-X_j$ when necessary we can assume that all $a_j \ge 0$.
We can further assume that $a_j > 0$ by dropping all zero terms from the sum.
If there exists $j_{0}$ with $a_{j_0} > 1/2$, then the conclusion follows by conditioning
on all $X_j$ except $X_{j_0}$. Thus we can assume that
$$
0 < a_j < \frac{1}{2} \quad \text{for all } j.
$$
Finally, by translating $X_j$ if necessary we reduce the problem to bounding the
density of $S = \sum_j a_j X_j$ at the origin.

After these reductions, we proceed to bounding $f_S(0)$ in terms of the characteristic function
\[
 \phi_S(t)= \E e^{ i t S}.
\]
We intend to use the Fourier inversion formula
\[
 f_S(0)= \frac{1}{2 \pi} \int_{\R} \phi_S ( x) \, dx.
\]
This formula requires the assumption that $\phi_S \in L_1(\R)$, while we only know that $\norm{\phi_S}_{\infty}\le 1$. This, however, is not a problem. We can add an independent $N(0, \sigma^2)$ random variable to each coordinate of $X$. In terms of the characteristic functions, this amounts to multiplying each $\phi_{X_j}  \in L_{\infty}(\R)$ by a scaled gaussian density making it an $L_1$-function. The bound on the density we are going to obtain will not depend on $\sigma$ which would allow taking $\sigma \to 0$.

By independence of the coordinates of $X$, $\phi_S(x) = \prod_j \phi_{X_j} (a_j t)$.
Combining it with the Fourier inversion formula, we obtain
\begin{equation}         \label{eq: I as product}
f_S(0)
 =  \frac{1}{2 \pi} \int_{\R} \prod_j \phi_{X_j} (a_j x) \, dx
 \le \frac{1}{2 \pi} \prod_j \Big( \int_\R |\phi_{X_j} (a_j x)|^{1/a_j^2} \, dx \Big)^{a_j^2},
\end{equation}
where we used Holder's inequality with exponents $1/a_j^2$ whose reciprocals sum up to 1.

We will estimate each integral appearing in the right hand side of \eqref{eq: I as product} separately. Denote by $\l$ the Lebesgue measure on $\R$. Using the Fubini theorem, we can rewrite each integral as
\begin{equation} \label{eq: distr function formula}
 \frac{1}{a_j} \cdot \int_\R |\phi_{X_j} (x)|^{1/a_j^2} \, dx
 = \int_0^1 \frac{1}{a_j^{3}} \cdot t^{1/a_j^2-1} \l \{x: |\phi_{X_j}(x)| >t \} \, dt.
\end{equation}
To estimate the last integral, we need a bound on the measure of points where the characteristic function is large. Such bound is provided in the lemma below.
\begin{lemma}[Decay of a characteristic function]				\label{l: char function decay}
  Let $X$ be a random variable whose density is bounded by $1$.
  Then the characteristic function of $X$ satisfies
  \[
  \l \{x: |\phi_{X}(x)| >t \} \le
  \begin{cases}
    \frac{2 \pi}{t^2}, & t \in (0,3/4) \\
    C\sqrt{1-t^2}, & t \in[3/4,1].
  \end{cases}
  \]
\end{lemma}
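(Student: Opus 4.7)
The plan is to treat the two ranges of $t$ with completely different tools: Plancherel for small $t$, and a self-improving (``doubling'') estimate near $t = 1$. Let $X'$ be an independent copy of $X$ and set $Y := X - X'$, a symmetric random variable. Since $\phi_X(x)\overline{\phi_X(x)} = \E e^{ixY}$ and $Y$ is symmetric,
$$
1 - |\phi_X(x)|^2 = \E\bigl(1 - \cos(xY)\bigr) = 2\E\sin^2(xY/2),
$$
and this identity will drive both cases.

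For $t \in (0, 3/4)$ I would simply apply Plancherel: since $f_X$ has $L^1$-norm $1$ and $L^\infty$-norm at most $1$, $\|\phi_X\|_2^2 = 2\pi \|f_X\|_2^2 \le 2\pi$. Then Chebyshev applied to $|\phi_X|^2$ gives $t^2 \lambda\{|\phi_X|>t\} \le 2\pi$, which is the first claimed bound.

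For $t \in [3/4, 1]$ the main ingredient is the doubling inequality
$$
1 - |\phi_X(2x)|^2 \le 4\bigl(1 - |\phi_X(x)|^2\bigr),
$$
which follows from the displayed identity above together with $\sin(xY) = 2\sin(xY/2)\cos(xY/2)$ and $\cos^2 \le 1$. Writing $E_t := \{x : |\phi_X(x)| > t\}$, this implies $2 E_t \subseteq E_{t_1}$ with $1-t_1^2 \le 4(1-t^2)$; iterating $k$ times gives $2^k E_t \subseteq E_{t_k}$ with $1 - t_k^2 \le 4^k(1-t^2)$.

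To finish, I would choose $k \ge 0$ maximal with $4^k(1-t^2) \le 7/16$, so that $t_k \ge 3/4$ and the first bound applies to $E_{t_k}$, giving $\lambda(E_{t_k}) \le 32\pi/9$. Combined with the measure scaling $2^k\lambda(E_t) \le \lambda(E_{t_k})$ and the lower bound $2^k \gtrsim 1/\sqrt{1-t^2}$ forced by the maximality of $k$ (since $4^{k+1}(1-t^2) > 7/16$), this delivers $\lambda(E_t) \le C\sqrt{1-t^2}$ for an absolute constant $C$. The only essential step is the doubling inequality; the rest is elementary bookkeeping of constants, and I do not anticipate any real obstacle beyond a careful verification of those constants.
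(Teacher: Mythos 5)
Your proposal is correct and follows essentially the same route as the paper: Plancherel plus Chebyshev for the range $t\in(0,3/4)$, and for $t\in[3/4,1]$ the Hal\'asz-type regularity argument based on symmetrization, the identity $1-|\phi_X(x)|^2=2\,\E\sin^2(xY/2)$, and rescaling of Lebesgue measure. The only (cosmetic) difference is that you iterate the $m=2$ doubling inequality dyadically, whereas the paper applies $|\sin(m x)|\le m|\sin x|$ for a general integer $m$ chosen to match the level $s=\sqrt{1-t^2}$; both yield the stated bound with an absolute constant.
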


Let us postpone the proof of the lemma for a moment and finish the proof of the one-dimensional case of Theorem \ref{thm: dens PX}.
 Fix $j \in [n]$ and denote for shortness $p=1/a_j^2 \ge 4$. Combining Lemma \ref{l: char function decay} and \eqref{eq: distr function formula}, we obtain
\begin{align*}
 &\frac{1}{a_j} \cdot \int_\R |\phi_{X_j} (x)|^{1/a_j^2} \, dx \\
 &\le p^{3/2} \cdot
   \left( \int_0^{3/4} t^{p-1} \cdot \frac{2 \pi}{t^2} \, dt
   + \int_{3/4}^1 t^{p-1} \cdot C\sqrt{1-t^2} \, dt
   \right) \\
 &\le p^{3/2} \cdot
   \left(  \frac{2 \pi}{p-2}  \cdot (3/4)^{p-2}
   + C \int_0^{\sqrt{7}/4} (1-s^2)^{(p-2)/2} \cdot s^2 \, ds
   \right),
\end{align*}
where we used the substitution $s^2=1-t^2$ in the second term.
The function
\[
 u(p)=p^{3/2} \cdot
    \frac{2 \pi}{p-2}  \cdot (3/4)^{p-2}
\]
is uniformly bounded for $p \in [4, \infty)$. To estimate the second term, we can use the inequality $1-s^2 \le \exp(-s^2)$, which yields
\[
  p^{3/2} \int_0^{\sqrt{7}/4} (1-s^2)^{(p-2)/2} \cdot s^2 \, ds
  \le p^{3/2} \int_0^\infty \exp\left(- \frac{p-2}{2}s^2 \right) s^2 \, ds.
\]
The last expression is also uniformly bounded for $p \in [4, \infty)$.
This proves that
\[
  \frac{1}{a_j} \cdot \int_\R |\phi_{X_j} (x)|^{1/a_j^2} \, dx \le C
\]
for all $j$, where $C$ is an absolute constant.
Substituting this into \eqref{eq: I as product} and using that $\sum_{j=1}^n a_j^2=1$ yields $f_s(0) \le C'$ completing the proof of Step 1 modulo Lemma \ref{l: char function decay}.
\end{proof}
Let us prove the lemma now.
\begin{proof}[Proof of Lemma \ref{l: char function decay}]
 The first bound in the lemma follows from Markov's inequality
 \[
  \l \{x: |\phi_{X}(x)| >t \} \le \frac{\norm{\phi_X}_2^2}{t^2}
 \]
 To estimate the $L_2$-norm, we apply the Plancherel identity:
 \begin{equation}  \label{eq: phi t large}
  \norm{\phi_X}_2^2 =2 \pi \norm{f_X}_2^2
  \le 2 \pi \norm{f_X}_{\infty} \cdot \norm{f_X}_1 \le 2 \pi.
 \end{equation}

 The estimate for $t \in [3/4,1]$ will be based on a regularity argument
going back to Halasz \cite{Halasz 75}.

We will start with the symmetrization.
Let $X'$ denote an independent copy of $X$. Then
\begin{align*}
|\phi_X(t)|^2
&= \E e^{itX} \, \E \overline{e^{itX}}
= \E e^{itX} \, \E e^{-itX'}
= \E e^{it(X-X')} \\
&= \phi_{\tilde{X}}(t),
\quad \text{where } \tilde{X} := X-X'.
\end{align*}
Further, by symmetry of the distribution of $\tilde{X}$, we have
\[
\phi_{\tilde{X}}(t) = \E \cos(t \tilde{X})
= 1 - 2 \E \sin^2 \left( \frac{1}{2} t \tilde{X} \right)
=: 1 - \psi(t).
\]
Denoting $s^2=1-t^2$, we see that to prove that
\[
  \l \{x: \ |\phi_X(x)| >t \} \le C \sqrt{1-t^2} \quad \text{for } t \in [3/4,1],
\]
it is enough to show that
\begin{equation}         \label{eq: mu psi}
\lambda\{ \tau: \psi(\tau) \le s^2 \} \le Cs, \quad \text{for } 0 < s \le 1/2.
\end{equation}

Observe that \eqref{eq: mu psi} holds for some fixed constant value of $s$.
This follows from the identity $|\phi_X(\tau)|^2 = 1 - \psi(\tau)$ and inequality \eqref{eq: phi t large}:
\begin{equation}         \label{eq: fixed t}
\lambda \big\{ \tau: \psi(\tau) \le \frac{1}{4} \big\}
= \lambda \{ \tau: |\phi_X(\tau)| \ge \sqrt{3/4} \}
\le 8\pi/3 \le 9.
\end{equation}
Next, the definition of $\psi(\cdot)$ and
the inequality $|\sin(mx)| \le m|\sin x|$ valid for $x \in \R$ and $m \in \N$ imply
that
$$
\psi(mt) \le m^2 \psi(t), \quad t > 0, \; m \in \N.
$$
Therefore
\begin{equation}         \label{eq: t discrete}
\lambda \big\{ \tau: \psi(\tau) \le \frac{1}{4m^2} \big\}
\le \lambda \big\{ \tau: \psi(m\tau) \le \frac{1}{4} \big\}
= \frac{1}{m} \, \lambda \big\{ \tau: \psi(\tau) \le \frac{1}{4} \big\}
\le \frac{9}{m},
\end{equation}
where in the last step we used \eqref{eq: fixed t}.
This establishes \eqref{eq: mu psi} for the discrete set of values $t = \frac{1}{2m}$, $m \in \N$.
We can extend this to arbitrary $t>0$ in a standard way,
by applying \eqref{eq: t discrete} for $m \in \N$ such that
$t \in (\frac{1}{4m}, \frac{1}{2m}]$.
This proves \eqref{eq: mu psi} and completes the proof of Lemma~\ref{l: char function decay}.
\end{proof}
\vskip 0.1in
We now pass to the multidimensional case. As for one dimension, our strategy will depend on
whether all vectors $Pe_j$ are small or some $Pe_j$ are large.
In the first case, we proceed with a high-dimensional version of the argument
from Step 1, where H\"older's inequality will be replaced by Brascamp-Lieb's inequality.
In the second case, we will remove the large vectors $Pe_j$ one by one, using the induction over the dimension.

\vskip 0.1in
\textbf{Step 2.}
  Let $X$ be a random vector and $P$ be a projection
  which satisfy the assumptions of Theorem~\ref{thm: dens PX}.
  Assume that
  $$
  \|Pe_j\|_2 \le 1/2 \quad \text{for all } j=1,\ldots,n.
  $$
  Then the density of the random vector $PX$ is bounded by $(CK)^d$ almost everywhere.
\vskip 0.1in

The proof will be based on Brascamp-Lieb's inequality.

\begin{theorem}[Brascamp-Lieb \cite{BL}, see also \cite{B2}]
  Let $u_1,\ldots,u_n \in \R^d$ be unit vectors and $c_1,\ldots,c_n >0$ be real numbers satisfying
  $$
  \sum_{i=1}^n c_j u_j u_j^\top = I_d.
  $$
  Let $f_1,\ldots,f_n : \R \to [0,\infty)$ be integrable functions. Then
  $$
  \int_{\R^n} \prod_{j=1}^n f_j(\pr{x}{u_j} )^{c_j} \; dx
  \le \prod_{j=1}^n \Big( \int_\R f_j(t) \; dt \Big)^{c_j}.
  $$
\end{theorem}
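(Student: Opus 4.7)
The plan is to give a self-contained proof via the mass-transport argument of Barthe. By replacing each $f_j$ with a positive multiple I may assume $\int_\R f_j = 1$, so the claim reduces to $\int_{\R^d} \prod_{j=1}^n f_j(\pr{y}{u_j})^{c_j}\, dy \le 1$; by mollification and truncation I may further assume each $f_j$ is smooth and strictly positive. For each $j$ let $T_j : \R \to \R$ be the strictly increasing transport carrying $f_j$ onto the standard Gaussian density $\gamma(t) = (2\pi)^{-1/2} e^{-t^2/2}$, characterized by $\gamma(T_j(s)) T_j'(s) = f_j(s)$, and define
\[
  \Theta(y) := \sum_{j=1}^n c_j\, T_j(\pr{y}{u_j})\, u_j,
\]
so that $D\Theta(y) = \sum_j c_j T_j'(\pr{y}{u_j})\, u_j u_j^\top$. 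Since $\sum_j c_j u_j u_j^\top = I_d$ and each $T_j' > 0$, this matrix is positive definite; moreover $\Theta$ is the gradient of the strictly convex potential $y \mapsto \sum_j c_j \int_0^{\pr{y}{u_j}} T_j$, hence a $C^1$ diffeomorphism onto its image.

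The argument is driven by two pointwise inequalities. The \emph{quadratic inequality} $\|\Theta(y)\|_2^2 \le \sum_j c_j T_j(\pr{y}{u_j})^2$ amounts to $U^\top U \preceq D^{-1}$, where $U = [u_1 \mid \cdots \mid u_n]$ and $D = \mathrm{diag}(c_1, \ldots, c_n)$; this follows because the nonzero eigenvalues of $D^{1/2} U^\top U D^{1/2}$ coincide with those of $UDU^\top = I_d$, hence lie in $\{0,1\}$. The \emph{determinantal inequality} $\det(D\Theta(y)) \ge \prod_j T_j'(\pr{y}{u_j})^{c_j}$ is the heart of the proof. Granted these two, a direct calculation using $f_j = \gamma(T_j) T_j'$ together with $\sum_j c_j = d$ (trace of the hypothesis) gives the pointwise bound
\[
  \prod_{j=1}^n f_j(\pr{y}{u_j})^{c_j} \le \gamma_d(\Theta(y))\, \det(D\Theta(y)),
\]
with $\gamma_d$ the standard Gaussian density on $\R^d$. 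Integrating and substituting $x = \Theta(y)$ then yields $\int_{\R^d} \prod_j f_j(\pr{y}{u_j})^{c_j}\, dy \le \int_{\Theta(\R^d)} \gamma_d(x)\, dx \le 1$, as required.

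The main obstacle is the determinantal inequality, which I would establish via the Cauchy--Binet expansion
\[
  \det\Big(\sum_{j=1}^n a_j u_j u_j^\top\Big) = \sum_{\substack{S \subset [n] \\ |S| = d}} V_S^2 \prod_{j \in S} a_j, \qquad V_S := \det[u_j]_{j \in S}.
\]
Specializing $a_j = c_j$ and using the hypothesis, $\sum_S V_S^2 \prod_{j \in S} c_j = \det(I_d) = 1$, so the weights $w_S := V_S^2 \prod_{j \in S} c_j$ form a probability distribution on $d$-subsets of $[n]$. For arbitrary $t_j > 0$, one then rewrites $\det(\sum_j c_j t_j u_j u_j^\top) = \sum_S w_S \prod_{j \in S} t_j$, and Jensen's inequality applied to $\log$ yields
\[
  \log \det\Big(\sum_j c_j t_j u_j u_j^\top\Big) \ge \sum_{j=1}^n \alpha_j \log t_j, \qquad \alpha_j := \sum_{S \ni j} w_S.
\]
A short computation via Jacobi's formula $\partial_{c_j} \det(\sum_k c_k u_k u_k^\top) = u_j^\top \mathrm{adj}(M) u_j$ evaluated at $M = I_d$ (where $\mathrm{adj}(I_d) = I_d$) identifies $\alpha_j = c_j$, which delivers the determinantal inequality upon setting $t_j = T_j'(\pr{y}{u_j})$.
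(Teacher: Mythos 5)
Your proof is correct. Note that the paper itself does not prove this theorem at all: it quotes Brascamp--Lieb as a known result, citing \cite{BL} and \cite{B2}, and points the reader to Barthe's mass-transportation proof in \cite{Barthe} -- and what you have written is precisely that transportation argument, correctly executed. The two pointwise ingredients check out: the quadratic inequality $\norm{\Theta(y)}_2^2 \le \sum_j c_j T_j(\pr{y}{u_j})^2$ is indeed equivalent to $D^{1/2}U^\top U D^{1/2} \preceq I_n$, which follows since its nonzero eigenvalues are those of $UDU^\top = I_d$; and your Cauchy--Binet/Jensen proof of the determinantal inequality $\det\big(\sum_j c_j t_j u_j u_j^\top\big) \ge \prod_j t_j^{c_j}$ is sound, with the identification $\alpha_j = c_j$ correctly obtained by differentiating the Cauchy--Binet polynomial at $a=c$ and comparing with Jacobi's formula (this determinant lemma is Ball's, and is the standard engine of Barthe's argument). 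The pointwise bound, the change of variables $x=\Theta(y)$ (legitimate since $\Theta$ is the gradient of a strictly convex $C^2$ potential, hence an injective $C^1$ map with positive Jacobian), and the conclusion all follow. Two minor remarks: the domain of integration in the statement as printed should be $\R^d$ rather than $\R^n$ (you silently and correctly fixed this), and the reduction to smooth strictly positive $f_j$ deserves a sentence (e.g.\ truncate, add $\e\gamma$, mollify, and pass to the limit via Fatou), though this is routine.
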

A short and very elegant proof of the Brascamp-Lieb inequality based on the measure transportation ideas can be found in \cite{Barthe}.

The singular value decomposition of $P$ yields the existence of a
$d \times n$ matrix $R$ satisfying
$$
P = R^\top R, \quad R R^\top = I_d.
$$
It follows that $\|Px\|_2 = \|Rx\|_2$ for all $x \in \R^d$.
This allows us to work with the matrix $R$ instead of $P$.
As before,  replacing each $X_j$ by $KX_j$, we may assume that $K=1$. Finally, translating $X$ if necessary we reduce the problem to bounding
the density of $RX$ at the origin.

As in the previous step, Fourier inversion formula
associated with the Fourier transform in $n$ dimensions yields that the density of $RX$ at the origin
can be reconstructed from its Fourier transform as
\begin{equation}         \label{eq: L1 char function}
f_{RX}(0)
= (2 \pi)^{-d}  \int_{\R^d} \phi_{RX}( x) \; dx
\le (2 \pi)^{-d} \int_{\R^d} |\phi_{RX}(x)| \; dx
\end{equation}
where
\begin{equation}         \label{eq: phiRX}
\phi_{RX}(x) = \E \exp \big( i \pr{x}{RX} \big)
\end{equation}
is the characteristic function of $RX$.
Therefore, to complete the proof, it suffices to bound the integral in the
right hand side of \eqref{eq: L1 char function} by $C^d$.

In order to represent $\phi_{RX}(x)$
more conveniently for application of Brascamp-Lieb inequality,
we denote
$$
a_j := \|Re_j\|_2, \quad u_j := \frac{Re_j}{\|Re_j\|_2}.
$$
Then $R = \sum_{j=1}^n a_j u_j e_j^\top$, so the identity $RR^\top= I_d$ can be written as
\begin{equation}         \label{eq: uj decomposition}
\sum_{j=1}^n a_j^2 u_j u_j^\top = I_d.
\end{equation}
Moreover, we have $\pr{x}{RX} = \sum_{i=1}^n a_j \pr{x}{u_j} X_j$.
Substituting this into \eqref{eq: phiRX} and using independence, we obtain
$$
\phi_{RX}(x) = \prod_{j=1}^n \E \exp \big( i a_j \pr{x}{u_j} X_j \big).
$$
Define the functions $f_1, \ldots, f_n : \R \to [0,\infty)$ as
$$
f_j(t) := \big| \E \exp(i a_j t X_j) \Big|^{1/a_j^2} = \big| \phi_{X_j} (a_j t) \big|^{1/a_j^2}.
$$
Recalling \eqref{eq: uj decomposition}, we apply Brascamp-Lieb inequality
for these functions and obtain
\begin{align}			\label{eq: int by product}
\int_{\R^d} |\phi_{RX}(x)| \; dx
  &= \int_{\R^d} \prod_{j=1}^n f_j \big( \pr{x}{u_j} \big)^{a_j^2} \; dx  \nonumber\\
  &\le \prod_{j=1}^n \Big( \int_\R f_j(t) \; dt \Big)^{a_j^2}
  = \prod_{j=1}^n \Big( \int_\R \big| \phi_{X_j} (a_j t) \big|^{1/a_j^2} \; dt \Big)^{a_j^2}.
\end{align}
We arrived at the same quantity as we encountered in one-dimensional argument in \eqref{eq: I as product}.
Following that argument, which uses the assumption that all $a_j \le 1/2$, we bound the product
 above by
$$
(2C)^{\sum_{j=1}^n a_j^2}.
$$
Recalling that $a_j = \|Re_j\|_2$ and , we find that
\[
  \sum_{j=1}^n a_j^2 = \sum_{j=1}^n \|Re_j\|_2^2 = \text{Tr}(RR^\top) = \text{Tr}(I_d) = d.
\]
Thus the right hand side of \eqref{eq: int by product} is bounded by $(2C)^d$.
The proof of Theorem \ref{thm: dens PX} in the case where all $\norm{P e_j}_2$ are small is complete.

\vskip 0.1in
\textbf{Step 3.}
 Inductive argument.
\vskip 0.05in

We will prove Theorem \ref{thm: dens PX} by induction on the rank of the projection.
 The case $\text{rank}(P)=1$ has been already established. We have also proved the Theorem when $\norm{Pe_j}_2<1/2$ for all $j$. Assume that the theorem holds for all projections $Q$ with $\text{rank}(Q)=d-1$ and $\norm{Pe_1}_2 \ge 1/2$.

The density function is not a convenient tool to run the inductive argument since the density of $P_X$ does not usually splits into a product of densities related to the individual coordinates.
Let us consider the \emph{L\'evy concentration function} of a random vector which would replace the density in our argument. 
\begin{definition}
Let $r>0$. 
For a random vector $Y \in \R^n$, define its L\'evy concentration function by
\[
 \LL(Y,r):=\sup_{y \in \R^n} \Pr{\norm{Y-y}_2 \le r}.
\]
\end{definition}
Note that the condition that the density of $Y$ is bounded  is equivalent to
\[
 \LL(Y,r \sqrt{n})\le (Cr)^n \quad \text{for any } r>0.
\]
This follows from  the Lebesgue differentiation theorem and the fact that the Lebesgue measure of a ball of of radius $r \sqrt{n}$ is $(cr)^n$.

In terms of the L\'evy concentration function, the statement of the theorem is equivalent to the claim that for for any $y \in P \R^n$ and any $t > 0$,
\begin{equation} \label{eq: concentration function in d}
 \Pr{\norm{PX-y}_2 \le t \sqrt{d}} \le (M t)^d
\end{equation}
for some absolute constant $M$, where we denoted $d= \text{rank}(P)$. One direction of this equivalence follows from the integration of  the density function over the ball of radius $t \sqrt{d}$ centered at $y$; another one from the Lebesgue differentiation theorem. The induction assumption then reads
\begin{equation} \label{eq: concentration function in d-1}
 \Pr{\norm{QX-z}_2 \le t \sqrt{d-1}} \le (M t)^{d-1}
\end{equation}
for all projections $Q$ of rank $d-1$, $z \in Q \R^n$, and $t > 0$. Comparison of \eqref{eq: concentration function in d-1} and \eqref{eq: concentration function in d} immediately shows the difficulties we are facing: the change from $d-1$ to $d$ in the left hand side of these inequalities indicates that we have to work accurately to preserve the constant $M$ while deriving \eqref{eq: concentration function in d} from \eqref{eq: concentration function in d-1}. This is achieved by a delicate tensorization argument. By considerind an appropriate shift of $X$, we can assume without loss of generality that $y=0$. Let us formulate the induction step as a separate proposition.
\begin{proposition}[Removal of large $Pe_i$]				\label{prop: Pei large}			
  Let $X$ be a random vector satisfying the assumptions of Theorem~\ref{thm: dens PX} with $K=1$,
  and let $P$ be an orthogonal projection in $\R^n$ onto a $d$-dimensional subspace.
  Aassume that
  $$
  \|Pe_1\|_2 \ge 1/2.
  $$
  Define $Q$ to be the orthogonal projection in $\R^n$ such that
  $$
  \ker(Q) = \Span\{ \ker(P), P e_1 \}.
  $$
  Let $M \ge C_0$ where $C_0$ is an absolute constant. If
  \begin{equation}         \label{eq: QX hypothesis}
  \Pr{ \|QX\|_2 \le t \sqrt{d-1} } \le (M t)^{d-1} \quad \text{for all } t \ge 0,
  \end{equation}
  then
  $$
  \Pr{ \|PX\|_2 \le t \sqrt{d} } \le (M t)^d \quad \text{for all } t \ge 0.
  $$
\end{proposition}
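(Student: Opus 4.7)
The plan is to exploit the identity $P = Q + uu^\top$ where $u := Pe_1/\|Pe_1\|_2$. Because $\mathrm{range}(Q)$ is exactly the orthogonal complement of $u$ inside $\mathrm{range}(P)$, this orthogonal decomposition yields
\[
\|PX\|_2^2 = \|QX\|_2^2 + \langle u, X\rangle^2.
\]
The crucial observation is that $Qe_1 = Pe_1 - uu^\top e_1 = Pe_1 - \|Pe_1\|_2\, u = 0$, so $QX$ depends only on $X_2,\dots,X_n$. Simultaneously, the first coordinate of $u$ equals $u_1 = \langle Pe_1, e_1\rangle/\|Pe_1\|_2 = \|Pe_1\|_2 \ge 1/2$. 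This asymmetry between $Q$ and $u$ with respect to the coordinate $X_1$ is what drives the argument.

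Next, condition on $X_2,\ldots,X_n$. Then $QX$ becomes a deterministic vector and $\langle u, X\rangle = u_1 X_1 + c$ for a constant $c$; since $X_1$ has density bounded by $1$ and $u_1 \ge 1/2$, the conditional density of $\langle u,X\rangle$ is bounded by $2$. Hence for any $a \ge 0$,
\[
\Pr{|\langle u,X\rangle| \le a \mid X_2,\ldots,X_n} \le 4a.
\]
Applying this with $a^2 = t^2 d - \|QX\|_2^2$ (when nonnegative) and taking expectations gives
\[
\Pr{\|PX\|_2 \le t\sqrt{d}} \le 4\,\E\!\left[\sqrt{t^2 d - \|QX\|_2^2}\,\mathbf{1}_{\|QX\|_2 \le t\sqrt{d}}\right].
\]

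Now I would evaluate this expectation by a layer-cake representation. Writing the expectation as
\[
\int_0^{t\sqrt{d}} \Pr{\|QX\|_2 < \sqrt{t^2 d - s^2}}\, ds
\]
and applying the inductive hypothesis \eqref{eq: QX hypothesis} in the form $\Pr{\|QX\|_2 \le r} \le (Mr/\sqrt{d-1})^{d-1}$, the right-hand side becomes
\[
4\left(\tfrac{M}{\sqrt{d-1}}\right)^{d-1} \int_0^{t\sqrt{d}} (t^2 d - s^2)^{(d-1)/2}\, ds.
\]
The substitution $s = t\sqrt{d}\sin\phi$ turns the integral into $(t\sqrt{d})^{d}\int_0^{\pi/2} \cos^d \phi\, d\phi$. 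Using the Wallis identity $I_d I_{d-1} = \pi/(2d)$ together with $I_d \le I_{d-1}$ gives $\sqrt{d}\,I_d \le \sqrt{\pi/2}$, and the elementary estimate $(d/(d-1))^{(d-1)/2} \le \sqrt{e}$ finishes the collection of constants:
\[
\Pr{\|PX\|_2 \le t\sqrt{d}} \le 4\sqrt{e\pi/2}\,(Mt)^{d-1}\,t.
\]
This is at most $(Mt)^d$ as soon as $M \ge C_0 := 4\sqrt{e\pi/2}$.

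The main obstacle is that the induction must be run with the same constant $M$ in both the hypothesis and the conclusion, so the estimates have to be genuinely tight: one cannot afford any slack that would scale like a geometric factor in $d$. This is why the decomposition must use precisely the direction $u = Pe_1/\|Pe_1\|_2$ (so that $Qe_1 = 0$ kills the dependence on $X_1$ exactly), why the density bound $1/u_1 \le 2$ must be used rather than a weaker one, and why the Wallis bound for $I_d$ must be the sharp $\sqrt{\pi/(2d)}$. A weaker estimate at any of these three steps would produce a $d$-dependent factor and break the induction.
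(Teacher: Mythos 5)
Your proof is correct and follows essentially the same route as the paper: the orthogonal splitting $\|PX\|_2^2=\|QX\|_2^2+\langle u,X\rangle^2$ with $Qe_1=0$, the conditional small-ball bound on the $X_1$-direction using $\|Pe_1\|_2\ge 1/2$, and then a tensorization of the two bounds with constants kept dimension-free. Your layer-cake computation with the Wallis integral is just a cosmetic repackaging of the paper's Tensorization Lemma (which integrates against $dF_2$, integrates by parts, and estimates the resulting beta-type integral), so there is nothing further to add.
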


\begin{proof}
Let us record a few basic properties of $Q$.
A straightforward check shows that
\begin{equation}							\label{eq: P-Q}
\text{$P-Q$ is the orthogonal projection onto $\Span(Pe_1)$.}
\end{equation}
It follows that $(P-Q)e_1 = Pe_1$, since the orthogonal projection of $e_1$
onto $\Span(Pe_1)$ equals $Pe_1$. Canceling $Pe_1$ on both sides, we have
\begin{equation}							\label{eq: Qe1}
Qe_1 = 0.
\end{equation}
It follows from \eqref{eq: P-Q} that $P$ has the form
\begin{equation}         \label{eq: Px}
Px = \Big( \sum_{j=1}^n a_j x_j \Big) Pe_1 + Q x \quad \text{for } x = (x_1,\ldots,x_n) \in \R^n,
\end{equation}
where $a_j$ are fixed numbers (independent of $x$).
Substituting $x=e_1$, we obtain using \eqref{eq: Qe1} that $Pe_1 = a_1 Pe_1 + Qe_1 = a_1 Pe_1$.
Thus
\begin{equation}         \label{eq: a1}
a_1 = 1.
\end{equation}
Furthermore, we note that
\begin{equation}         \label{eq: Qx x1}
\text{$Qx$ does not depend on $x_1$}
\end{equation}
since $Qx = Q(\sum_{i=1}^n x_j e_j) = \sum_{i=1}^n x_j Qe_j$ and
$Qe_1 = 0$ by \eqref{eq: Qe1}.
Finally, since $Pe_1$ is orthogonal to the image of $Q$,
the two vectors in the right side of \eqref{eq: Px}
are orthogonal. Thus
\begin{equation}         \label{eq: Px norm}
\|Px\|_2^2 = \Big( \sum_{j=1}^n a_j x_j \Big)^2 \|Pe_1\|_2^2 + \|Qx\|_2^2.
\end{equation}

Now let us estimate $\|PX\|_2$ for a random vector $X$.
We express $\|PX\|_2^2$ using \eqref{eq: Px norm} and \eqref{eq: a1} as
$$
\|P X\|_2^2
= \Big( X_1 + \sum_{j=2}^n a_j X_j \Big)^2 \|Pe_1\|_2^2 + \|QX\|_2^2 \\
=: Z_1^2 + Z_2^2.
$$
Since by \eqref{eq: Qx x1} $Z_2$ is determined by $X_2,\ldots,X_n$ (and is independent of $X_1$),
and $\|Pe_i\|_2 \ge 1/2$ by a hypothesis of the lemma, we have
\begin{align*}
\Pr{Z_1 \le t \;|\; Z_2}
  &\le \max_{X_2,\ldots,X_n} \Pr{ \Big| X_1 + \sum_{j=2}^n a_j X_j \Big| \le t/\norm{P e_1}_2 \;\Big|\; X_2,\ldots,X_n} \\
  &\le \max_{u \in \R} \Pr{|X_1-u| \le 2t}
  \le 2 t.
\end{align*}
The proof of the inductive step thus reduces to a two-dimensional statement, which we formulate as a separate lemma.
\begin{lemma}[Tensorization]					\label{l: tensorization}
  Let $Z_1, Z_2 \ge 0$ be random variables and $K_1, K_2 \ge 0$, $d > 1$ be real numbers.
  Assume that
  \begin{enumerate}
    \item $\Pr{Z_1 \le t \;|\; Z_2} \le 2 t$ almost surely in $Z_2$ for all $t \ge 0$;
    \item $\Pr{Z_2 \le t \sqrt{d-1}} \le (M t)^{d-1}$ for all $t \ge 0$.
  \end{enumerate}
  for a sufficiently large absolute constant $M$.
  Then
  $$
  \Pr{\sqrt{Z_1^2 + Z_2^2} \le t \sqrt{d}} \le (M t)^d \quad \text{for all } t \ge 0.
  $$
\end{lemma}
The proof of the tensorization lemma requires an accurate though straightforward calculation. We write
\[
 \Pr{\sqrt{Z_1^2 + Z_2^2} \le t \sqrt{d}}
 =\int_0^{ t^2 d} \Pr{Z_1 \le (t^2 d-x)^{1/2} \;|\; Z_2^2 = x} \; d F_2(x)
\]
where $F_2(x) = \Pr{Z_2^2 \le x}$ is the cumulative distribution function of $Z_2^2$.
Using hypothesis (1) of the lemma, we can bound the right hand side of  by
$$
2 \int_0^{t^2 d} (t^2 d-x)^{1/2} \; d F_2(x) =  \int_0^{t^2 d} F_2(x) (t^2 d-x)^{-1/2} \; dx,
$$
where the last equation follows by integration by parts.
Hypothesis (2) of the lemma states that
 \[
  F_2(x) \le M^{d-1} \left(\frac{x}{d-1} \right)^{(d-1)/2}.
 \]
Substituting this into the equality above and estimating the resulting integral explicitly, we obtain
\begin{align*}
  &\Pr{\sqrt{Z_1^2 + Z_2^2} \le t \sqrt{d}}
  \le \int_0^{t^2 d}  M^{d-1} \left(\frac{x}{d-1} \right)^{(d-1)/2} (t^2 d-x)^{-1/2} \; dx \\
  &= t^d \cdot M^{d-1} \frac{d^{d/2}}{(d-1)^{(d-1)/2}} \int_0^1 y^{(d-1)/2} (1-y)^{-1/2} \, dy
  \le  t^d \cdot M^{d-1} \cdot C,
\end{align*}
where the last inequality follows with an absolute constant $C$ from the known asymptotic of the beta-function. Alternatively, notice that
\[
  \frac{d^{d/2}}{(d-1)^{(d-1)/2}} \le \sqrt{e d},
\]
and
\begin{align*}
  \int_0^1 y^{(d-1)/2} (1-y)^{-1/2} \, dy
  &\le \int_0^{1-1/d} y^{(d-1)/2} \sqrt{d} \, dy + \int_{1-1/d}^1  (1-y)^{-1/2} \, dy \\
  &\le \frac{2}{\sqrt{e d}} + \frac{1}{2 \sqrt{d}}.
\end{align*}
This completes the proof of the lemma if we assume that $M \ge C$.
\end{proof}

\subsection{Small ball probability for the image of a vector.}

Let us derive an application of Theorem \ref{thm: dens PX} which will be important for us in the proof of the no-gaps delocalization theorem. We will prove a small ball probability estimate for the image of a fixed vector under the action of a random matrix with independent entries of bounded density.
\begin{lemma}[Lower bound for a fixed vector]		\label{lem: fixed vector}
  Let $G$ be an $l \times m$ matrix with independent complex random entries. Assume that the real parts of the entries have uniformly bounded densities, and the imaginary parts are fixed.
  For each $x \in S_{\C}^{m-1}$ and $\theta >0$, we have
  $$
  \Pr{ \|Gx\|_2 \le \theta \sqrt{l} } \le (C_0  \theta)^{l}.
  $$
\end{lemma}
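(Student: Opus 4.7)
The plan is to reduce the small-ball estimate for $\norm{Gx}_2$ to a one-dimensional density bound supplied by Step~1 of the proof of Theorem~\ref{thm: dens PX}, and then to tensorize over the $l$ independent rows of $G$.

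Write $x = a + ib$ with $a, b \in \R^m$ and $\norm{a}_2^2 + \norm{b}_2^2 = 1$, and decompose the entries as $G_{k,j} = R_{k,j} + i T_{k,j}$, where the $R_{k,j}$ are independent with densities bounded by some $K$ and the $T_{k,j}$ are deterministic. Since $\norm{G(ix)}_2 = \norm{Gx}_2$ and $\norm{ix}_2 = \norm{x}_2$, by replacing $x$ with $ix$ if necessary I may assume $\norm{a}_2 \ge 1/\sqrt{2}$. Let $R_k, T_k \in \R^m$ denote the $k$-th rows of the real and imaginary parts of $G$. A direct computation gives
$$
\real(Gx)_k = \pr{R_k}{a} - \pr{T_k}{b} =: W_k,
$$
and since $|(Gx)_k| \ge |W_k|$, we obtain the pointwise bound $\norm{Gx}_2^2 \ge \sum_{k=1}^l W_k^2$.

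Next I would bound the density of each $W_k$. Fix $k$ and set $\hat a := a/\norm{a}_2$. The coordinates of $R_k$ are independent with densities bounded by $K$, and $\sum_j \hat a_j^2 = 1$, so the one-dimensional case of Theorem~\ref{thm: dens PX} (Step~1 of its proof) shows that $\pr{R_k}{\hat a}$ has density bounded by $CK$. Since $W_k = \norm{a}_2 \pr{R_k}{\hat a} - \pr{T_k}{b}$ is an affine image of $\pr{R_k}{\hat a}$, its density is bounded by $CK/\norm{a}_2 \le \sqrt{2}\,CK$.

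Finally I tensorize via independence of rows. The random variables $W_1, \dots, W_l$ are independent, hence the vector $W = (W_1, \dots, W_l) \in \R^l$ has density bounded by $(\sqrt{2}\, CK)^l$. The volume of the Euclidean ball $B_2^l(\theta\sqrt{l}) \subset \R^l$ equals $\pi^{l/2}(\theta\sqrt{l})^l/\Gamma(l/2+1)$, which by Stirling is at most $(c\theta)^l$ for an absolute constant $c$. Combining these two bounds,
$$
\Pr{\norm{Gx}_2 \le \theta \sqrt{l}}
\le \Pr{\norm{W}_2 \le \theta \sqrt{l}}
\le (\sqrt{2}\, CK)^l (c\theta)^l \le (C_0 \theta)^l,
$$
as required. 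There is no serious obstacle here; the only point that needs a moment of care is the choice of the one-dimensional projection $\real(Gx)_k$, together with the preliminary swap $x \mapsto ix$ to guarantee $\norm{a}_2 \ge 1/\sqrt{2}$, which is what allows the Ball--Nazarov density bound from Theorem~\ref{thm: dens PX} to be applied with a constant independent of $x$.
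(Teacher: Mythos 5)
Your proof is correct, and its core is the same as the paper's: you drop the imaginary part of each coordinate of $Gx$, arrange (via the swap $x \mapsto ix$, which plays the role of the paper's ``either the real or the imaginary part of $x$ has norm $\ge 1/2$'' dichotomy) that the random weighted sum $\pr{R_k}{a}$ has weight vector of norm bounded below, and invoke the one-dimensional Ball--Nazarov bound from Step~1 of Theorem~\ref{thm: dens PX}; this is exactly what the paper does in Lemma~\ref{lem: fixed row and vector}. Where you diverge is the tensorization over the $l$ rows: the paper records only the small-ball estimate $\Pr{|\pr{G_j}{x}| \le \theta} \le C_0K\theta$ for each coordinate and then applies the Laplace-transform Tensorization Lemma~\ref{lem: tensorisation}, whereas you keep the stronger information that each $W_k$ has a bounded density, multiply the densities to bound the joint density of $W=(W_1,\dots,W_l)$, and integrate over the Euclidean ball of radius $\theta\sqrt{l}$, whose volume is $(c\theta)^l$. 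Both routes are valid here; yours is slightly more direct in the bounded-density setting, while the paper's tensorization lemma is the more robust tool because it needs only small-ball bounds rather than densities, which is what one must work with under the general distribution Assumption~\ref{A: general distribution} (and is why the paper phrases the coordinate-wise step through the L\'evy concentration function). The only point to make explicit in your write-up is that the affine shift $-\pr{T_k}{b}$ and the scaling by $\norm{a}_2$ do not spoil the density bound, which you do address, so there is no gap.
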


To prove this lemma, let us derive the small ball probability bound for a fixed coordinate of $Gx$ first.

\begin{lemma}[Lower bound for a fixed row and vector]			\label{lem: fixed row and vector}
  Let $G_j$ denote the $j$-th row of $G$.
  Then for each $j$, $z \in S_{\C}^{n-1}$, and $\theta \ge 0$, we have
  \begin{equation}							\label{eq: fixed row vector}
  \Pr{ |\pr{G_j}{z}| \le \theta} \le C_0 K \theta.		
  \end{equation}
\end{lemma}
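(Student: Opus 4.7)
The plan is to reduce the complex, two-real-dimensional small ball event to a one-dimensional density bound, which is exactly what Step 1 of Theorem~\ref{thm: dens PX} provides.

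Write $z = a + ib$ with $a,b \in \R^n$ satisfying $\|a\|_2^2 + \|b\|_2^2 = 1$, and write $G_j = X + iY$, where $X = (X_1,\dots,X_n)$ has independent real coordinates of density at most $K$, and $Y$ is deterministic (a fixed vector in $\R^n$). A direct expansion of the Hermitian inner product gives
\[
\pr{G_j}{z} = \sum_{k=1}^n (X_k + iY_k)(a_k - ib_k) = \left( \sum_k a_k X_k + \sum_k b_k Y_k \right) + i\left( -\sum_k b_k X_k + \sum_k a_k Y_k \right).
\]
Thus the real and imaginary parts of $\pr{G_j}{z}$ are affine functions of $X$ with leading linear parts $\sum_k a_k X_k$ and $-\sum_k b_k X_k$ respectively, the additive constants being determined by the fixed vector $Y$.

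Since $\|a\|_2^2 + \|b\|_2^2 = 1$, at least one of $\|a\|_2, \|b\|_2$ is $\ge 1/\sqrt{2}$. I treat the two cases symmetrically; say $\|a\|_2 \ge 1/\sqrt{2}$. Setting $w_k := a_k / \|a\|_2$, the vector $(w_1,\dots,w_n)$ lies on the unit sphere, so Step~1 of the proof of Theorem~\ref{thm: dens PX} applies to $W := \sum_k w_k X_k$ and shows that the density of $W$ is bounded by $CK$ almost everywhere. Consequently $S := \sum_k a_k X_k = \|a\|_2 W$ has density bounded by $CK / \|a\|_2 \le \sqrt{2}\, CK$.

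The event $|\pr{G_j}{z}| \le \theta$ forces $|\mathrm{Re}\,\pr{G_j}{z}| \le \theta$, i.e.\ $|S - u| \le \theta$ for the deterministic number $u := -\sum_k b_k Y_k$. Integrating the density bound on $S$ over an interval of length $2\theta$ yields
\[
\Pr{|\pr{G_j}{z}| \le \theta} \le \Pr{|S - u| \le \theta} \le 2\theta \cdot \sqrt{2}\, CK \le C_0 K \theta,
\]
and the case $\|b\|_2 \ge 1/\sqrt{2}$ is identical using the imaginary part in place of the real part. There is no real obstacle here beyond bookkeeping: the only substantive input is the one-dimensional density bound already proved (Step~1 of Theorem~\ref{thm: dens PX}), and the point is simply that fixing the imaginary parts of the entries turns a complex small-ball question into a real one.
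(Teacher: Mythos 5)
Your proof is correct and takes essentially the same route as the paper: you drop whichever of the real/imaginary parts of $\pr{G_j}{z}$ corresponds to the larger of $\|a\|_2,\|b\|_2$, reduce to a one-dimensional linear combination of the independent real entries shifted by a deterministic constant coming from the fixed imaginary parts, and conclude by the one-dimensional density bound of Theorem~\ref{thm: dens PX} integrated over an interval of length $2\theta$. The only cosmetic differences from the paper's argument are the threshold $1/\sqrt{2}$ in place of $1/2$ and the inner-product convention, neither of which affects anything.
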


\begin{proof}
Fix $j$ and consider the random vector $Z = G_j$. Expressing $Z$ and $z$
in terms of their real and imaginary parts as
$$
Z = X+iY, \quad z = x + iy,
$$
we can write the inner product as
$$
\pr{Z}{z} = \left[ \pr{X}{x} - \pr{Y}{y} \right] + i \left[ \pr{X}{y} + \pr{Y}{x} \right].
$$

Since $z$ is a unit vector, either $x$ or $y$ has norm at least $1/2$.
Assume without loss of generality that $\|x\|_2 \ge 1/2$. Dropping the imaginary part, we obtain
$$
|\pr{Z}{z}| \ge \left| \pr{X}{x} - \pr{Y}{y} \right|.
$$
The imaginary part $Y$ is fixed. Thus
\begin{equation}         \label{eq: SBP Zz}
\Pr{ |\pr{Z}{z}| \le \theta} \le \LL( \pr{X}{x}, \theta ).
\end{equation}

We can express $\pr{X}{x}$ in terms of the coordinates of $X$ and $x$ as the sum
$$
\pr{X}{x} = \sum_{k=1}^n X_k x_k.
$$
Here
$X_k$ are independent random variables with densities bounded by $K$.
Recalling that $\sum_{k=1}^m x_k^2 \ge 1/2$,
we can apply Theorem \ref{thm: dens PX} for a rank one projection.
It yields
\begin{equation}         \label{eq: SBP Xx}
\LL( \pr{X}{x}, \theta ) \le C K \theta.
\end{equation}
Substituting this into \eqref{eq: SBP Zz} completes the proof of Lemma~\ref{lem: fixed row and vector}.
\end{proof}

Now we can complete the proof of Lemma \ref{lem: fixed vector}
We can represent $\|Gx\|_2^2$ as a sum of independent non-negative random variables
$\sum_{j=1}^{l} |\pr{G_j}{x} |^2$.
Each of the terms $\pr{G_j}{x}$ satisfies \eqref{eq: fixed row vector}.
Then the conclusion follows from the following Tensorization Lemma applied to $V_j=|\pr{G_j}{x} |$.
\begin{lemma}  \label{lem: tensorisation}
 Let $V_1 \etc V_l$ be independent non-negative random variables satisfying
 \[
  \Pr{V_j<t} \le Ct
 \]
 for any $t>0$. Then
 \[
  \Pr{ \sum_{j=1}^l V_j^2 < t^2 l } \le (ct)^l.
 \]
\end{lemma}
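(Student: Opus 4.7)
The natural approach is an exponential (Laplace-transform) version of Markov's inequality applied to the sum $\sum_{j=1}^l V_j^2$. Since each $V_j$ has a small-ball bound of order $t$, one expects $V_j^2$ to have a Laplace transform controlled by $1/\sqrt{s}$, and independence then multiplies these bounds to yield something of the form $(C/\sqrt{s})^l$. Choosing the scale $s$ comparable to $1/t^2$ converts this into the desired exponential-in-$l$ decay.

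\medskip

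Concretely, the plan is as follows. First, I would upgrade the one-sided small-ball hypothesis $\Pr{V_j < t} \le Ct$ to a bound on the Laplace transform of $V_j^2$. Writing $\E e^{-sV_j^2}$ via the layer-cake formula,
\[
\E e^{-sV_j^2} = \int_0^1 \Pr{V_j^2 \le -\log(u)/s} \, du \le \int_0^1 C \sqrt{-\log(u)/s} \, du = \frac{C'}{\sqrt{s}},
\]
where the inner probability is bounded using $\Pr{V_j^2 \le r} = \Pr{V_j \le \sqrt{r}} \le C\sqrt{r}$ and the elementary identity $\int_0^1 \sqrt{-\log u} \, du = \sqrt{\pi}/2$. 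Here $C'$ is an absolute constant depending only on $C$.

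\medskip

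Next, by independence of the $V_j$,
\[
\E \exp\Bigl(-s \sum_{j=1}^l V_j^2\Bigr) = \prod_{j=1}^l \E e^{-sV_j^2} \le \Bigl(\frac{C'}{\sqrt{s}}\Bigr)^l.
\]
Markov's inequality applied to the non-negative random variable $\exp(-s \sum_j V_j^2)$ then yields, for every $s > 0$,
\[
\Pr{\sum_{j=1}^l V_j^2 < t^2 l} = \Pr{e^{-s\sum_j V_j^2} > e^{-s t^2 l}} \le e^{s t^2 l} \Bigl(\frac{C'}{\sqrt{s}}\Bigr)^l = \Bigl(\frac{C' e^{s t^2}}{\sqrt{s}}\Bigr)^l.
\]
Finally, I would optimize the free parameter by taking $s = 1/t^2$, which balances the two factors and gives
\[
\Pr{\sum_{j=1}^l V_j^2 < t^2 l} \le (C' e \cdot t)^l = (ct)^l
\]
with $c := eC'$ an absolute constant, as required.

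\medskip

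I do not expect serious obstacles here: the only mild point is making sure the exponent-free tail bound $\Pr{V_j < t} \le Ct$ really does produce a $1/\sqrt{s}$ Laplace transform (the $1/\sqrt{\cdot}$ rather than $1/s$ being the whole reason that the factor $t$ rather than $t^2$ appears in the final bound, matching the right scaling $t\sqrt{l}$ versus $\sum V_j^2 \ge t^2 l$). Beyond this, the argument is a short and standard tensorization via Chernoff.
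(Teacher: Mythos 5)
Your proof is correct and follows essentially the same route as the paper: an exponential (Laplace transform) Markov bound combined with a layer-cake/Fubini estimate showing $\E e^{-sV_j^2} \lesssim C/\sqrt{s}$, with the choice $s = 1/t^2$ (which the paper fixes from the start rather than optimizing at the end). The only differences are cosmetic, such as writing the layer-cake integral over the level sets of $e^{-sV_j^2}$ instead of the paper's form $\int_0^\infty 2x e^{-x^2}\Pr{V_j < tx}\,dx$.
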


\begin{proof}
 Since the random variables $V_1^2 \etc V_l^2$ are independent as well, the Laplace transform becomes a method of choice in handling this probability. By Markov's inequality, we have
 \begin{align*}
  \Pr{ \sum_{j=1}^l V_j^2 < t^2 l }
 & = \Pr{l- \frac{1}{t^2}  \sum_{j=1}^l V_j^2 >0 }
 \le \E \exp \left( l- \frac{1}{t^2}  \sum_{j=1}^l V_j^2 \right) \\
 & = e^l \prod_{j=1}^l \E \exp(-V_j^2/t^2).
 \end{align*}
 To bound the expectations in the right hand side, we use the Fubini theorem:
 \[
  \E \exp(-V_j^2/t^2)= \int_0^{\infty} 2x e^{-x^2} \Pr{V_j< tx} \, dx \le Ct,
 \]
 where the last inequality follows from the assumption on the small ball probability of $V_j$.
 Combining the previous two inequalities completes the proof.
\end{proof}

\section{No-gaps delocalization for matrices with absolutely continuous entries.}
In this section, we prove Theorem \ref{thm: delocalization continuous}.
To this end, we combine all the tools we discussed above:  the bound on the density of a projection of a random vector obtained in Theorem \ref{thm: dens PX},  the $\e$-net argument, and the small ball probability bound of Lemma \ref{lem: fixed vector}.
\subsection{Decomposition of the matrix}
Let us recall that we have reduced the claim of delocallization Theorem \ref{thm: delocalization continuous} to the following quantitative invertibility problem:
\begin{itemize}
  \item Let $A$ be an $n \times n$ matrix satisfying Assumptions \ref{A} and \ref{A: continuous distribution}. Let $\e>0 , \ t>0$,  $M>1$, and let $\l \in \C, \ |\l| \le M\sqrt{n}$. Let $I \subset[n]$ be a fixed set of cardinality $|I|=\e n$. Estimate
      \[
       p_0:=\P(s_{\min}((A-\l)_{I^c}) < t \sqrt{n} \text{ and } \norm{A} \le M \sqrt{n}).
      \]
\end{itemize}
Since the set $I$ is fixed, we can assume without loss of generality that $I$ consists of the last $\e n$ coordinates.

Let us decompose $(A-\l)_{I^c}$ as follows:
\begin{equation}         \label{eq: A decomposed}
 (A-\l)_{I^c} =
\begin{bmatrix}
\phantom{X} B \phantom{X} \\ \phantom{X} G \phantom{X}
\end{bmatrix},
\end{equation}
where $B$ and $G$ are rectangular matrices of size $(1-\e/2)n \times (1-\e)n$ and $(\e/2) n \times (1-\e)n$ respectively.
By Assumption \ref{A}, the random matrices $B$ and $G$ are independent, and moreover
all entries of $G$ are independent. At the same time, the matrix $B$ is still rectangular, and the ratio of its number of rows and columns is similar to that of the matrix $(A-\l)_{I^c}$. This would allow us to prove a weaker statement for the matrix $B$. Namely, instead of bounding the smallest singular value, which is the minimum of $\norm{Bx}_2$ over all unit vectors $x$, we will obtain the desired lower bound for all vectors which are far away from a certain low-dimensional subspace depending on $B$. The independence of $B$ and $G$ would make it possible to condition on $B$ fixing this subspace and apply Lemma \ref{l: small subspace} to the matrix $G$ restricted to this subspace to ensure that the matrix $(A-\l)_{I^c}$ is well invertible on this space as well.

Following this road map, we are going to show that either $\|Bx\|_2$ or $\|Gx\|_2$ is nicely bounded below for every vector $x \in S_{\C}^{n-1}$.
To control $B$, we use the second negative moment identity
to bound the Hilbert-Schmidt norm of the pseudo-inverse of $B$.
We deduce from it that most singular values of $B$ are not too small --
namely, all but $0.01 \e n$ singular values are bounded below by $\Omega(\sqrt{\e n})$.
It follows that $\|Bx\|_2$ is nicely bounded below when $x$ restricted to a subspace
of codimension $0.01 \e n$. (This subspace is formed by the corresponding singular vectors.)
Next, we condition on $B$ and we use $G$ to control the remaining $0.01 \e n$ dimensions.
Therefore, either $\|Bx\|_2$ or $\|Gx\|_2$ is nicely bounded below on the entire space, and thus
$\| (A-\l)_{I^c} x\|_2$ is nicely bounded below on the entire space as well.

We will now pass to the implementation of this plan.
To simplify the notation, \\ \emph{assume that the maximal density of the entries is bounded by 1}. \\
The general case can be reduced to this by scaling the entries.

\subsection{The negative second moment identity}
Let $k \ge m$. Recall that the Hilbert-Schmidt norm of a $k \times m$ matrix $V$ is just the Euclidean norm of the $km$-dimensional vector consisting of its entries. Like the operator norm, the Hilbert-Schmidt norm is invariant under unitary or orthogonal transformations of the matrix $V$. This allows to rewrite it in two ways:
\[
  \norm{V}_{HS}^2= \sum_{j=1}^m \norm{V_j}_2^2 = \sum_{j=1}^m s_j(V_j)^2,
\]
where $V_1 \etc V_m$ are the columns of $V$, and $s_1(V) \ge s_2(V) \ge \ldots \ge s_m(V) \ge 0$ are its singular values.
Applying this observation to the inverse of the linear operator defined by $V$ considered as an operator from $V \C^m$ to $\C^m$, we obtain \emph{the negative second moment identity}, see \cite{Tao book}:
\[
\sum_{j=1}^{m} s_j(B)^{-2} = \sum_{i=1}^m \dist(B_j, H_j)^{-2}.
\]
Here  $B_j$ denote
the columns of $B$, and $H_j = \Span(B_l)_{l \ne j}$.

Returning to the matrix $B$, denote for shortness $m=(1-\e)n$ and $\e'=\frac{\e}{2(1-\e)}$. In this notation, $B$ is a $(1+\e')m \times m$ matrix.
To bound the sum above, we have to establish a lower bound on the distance
between the random vector $B_j \in \C^{(1+\e')m}$
and random subspace $H_j \subseteq \C^{(1+\e')m}$ of complex dimension $m-1$.

\subsubsection{Enforcing independence of vectors and subspaces}

Let us fix $j$. If all entries of $B$ are independent, then $B_j$ and $H_j$ are independent.
However, Assumption \ref{A} leaves a possibility for $B_j$ to be correlated with $j$-th
row of $B$. This means that $B_j$ and $H_j$ may be dependent, which would
complicate the distance computation.

There is a simple way to remove the dependence by projecting out the $j$-th coordinate.
Namely, let $B'_j \in \C^{(1+\e')m-1}$ denote the vector $B_j$ with $j$-th coordinate removed,
and let $H'_j = \Span(B'_k)_{k \ne j}$. We note the two key facts. First,
$B'_j$ and $H'_j$ are independent by Assumption \ref{A}.
Second,
\begin{equation}         \label{eq: BjHj}
\dist(B_j, H_j) \ge \dist(B'_j, H'_j),
\end{equation}
since the distance between
two vectors can only decrease after removing a coordinate.

Summarizing, we have
\begin{equation}         		\label{eq: second negative moment}
\sum_{j=1}^m s_j(B)^{-2} \le \sum_{j=1}^m \dist(B'_j, H'_j)^{-2}.
\end{equation}

We are looking for a lower bound for the distances $\dist(B'_j, H'_j)$.
It is convenient to represent them via the orthogonal projection of $B'_j$ onto $(H'_j)^\perp$:
\begin{equation}         \label{eq: dist as proj}
\dist(B'_j, H'_j) = \| P_{E_j} B'_j \|_2,
\quad \text{where} \quad
E_j = (H'_j)^\perp.
\end{equation}
Recall that $B'_j \in \C^{(1+\e')m-1}$ is a random vector with independent
entries whose real parts have densities bounded by $1$
(by Assumptions~\ref{A} and \ref{A: continuous distribution});
and $H'_j$ is an independent subspace of $\C^{(1+\e')m-1}$ of complex dimension $m-1$.
This puts us on a familiar ground as we have already proved Theorem \ref{thm: dens PX}. Now, the main strength of this result becomes clear. The bound of Theorem \ref{thm: dens PX} is uniform over the possible subspaces $E_j$ meaning that we do not need any information about the specific position of this subspace in $\C^{(1+\e')m-1}$. This is a major source of simplifications in the proof of Theorem \ref{thm: delocalization continuous} compare to Theorem \ref{thm: delocalization general}. Under Assumption \ref{A: general distribution}, a bound on the small ball probability for $\| P_{E_j} B'_j \|_2$ depends on the arithmetic structure of the vectors contained in the space $E_j$. Identifying subspaces of $\C^{(1+\e')m-1}$ containing vectors having exceptional arithmetic structure and showing that, with high probability, the space $E_j$ avoids such positions, takes a lot of effort. Fortunately, under Assumption \ref{A: continuous distribution}, this problem does not arise thanks to the uniformity mentioned above.

\subsubsection{Transferring the problem from $\C$ to $\R$}				\label{s: from C to R}
If the real and the imaginary part of each entry of $A$ are random variables of bounded density, one can apply Theorem \ref{thm: dens PX} directly.  However, this case does not cover many matrices satisfying Assumption \ref{A}, most importantly, the matrices with real entries and complex spectrum. The general case, when
only the real parts of the vector $B'_j \in \C^{(1+\e')m-1}$ are random, requires an additional symmetrization step. Indeed, if we transfer the problem from the complex vector space to a real one of the double dimension,  only a half of the coordinates will be random.
Such vector would not be absolutely continuous, so we cannot operate in terms of the densities. As in the previous section, the \emph{L\'evy concentration function} of a random vector would replace the density in our argument.

Let us  formally transfer the problem from the complex to the real field.
To this end, we define the operation $z \mapsto \text{Real}(z)$ that makes complex vectors real
in the obvious way:
$$
\text{for } z = x+iy \in \C^N, \text{ define } \text{Real}(z) = \begin{bmatrix}x \\ y \end{bmatrix} \in \R^{2N}.
$$
Similarly, we can make a complex subspace $E \subset \C^N$ real by defining
$$
\text{Real}(E) = \{ \text{Real}(z) :\; z \in E \} \subset \R^{2N}.
$$
Note that this operation doubles the dimension of $E$.

Record two properties that follow straight from this definition.

\begin{lemma}			\label{lem: real}
(Elementary properties of operation $x \mapsto \real(x)$)	
  \begin{enumerate}
    \item \label{part: PE real}
      For a complex subspace $E$ and a vector $z$, one has
      $$
      \real(P_E z) = P_{\real(E)} \real(z).
      $$
    \item \label{part: LL real}
      For a complex-valued random vector $X$ and $r \ge 0$, one has
      $$
      \LL(\real(X),r) = \LL(X,r).
      $$
  \end{enumerate}
\end{lemma}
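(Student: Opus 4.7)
The plan is to treat $\real$ as an $\R$-linear bijection $\C^N\to\R^{2N}$ that is an isometry of real inner product spaces, where $\C^N$ is equipped with the real inner product $(z,w)\mapsto \real\langle z,w\rangle$. A direct expansion with $z=x+iy$, $w=u+iv$ confirms that $\real\langle z,w\rangle = \langle x,u\rangle+\langle y,v\rangle = \langle \real(z),\real(w)\rangle_{\R^{2N}}$, and in particular $\|z\|_{\C^N} = \|\real(z)\|_{\R^{2N}}$. Once this is established, both parts reduce to transferring standard facts across this isometry.

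Part (\ref{part: LL real}) is immediate: since $\real$ is a bijection that preserves norms, for every $y \in \C^N$ the event $\{\|X-y\|_2 \le r\}$ coincides with $\{\|\real(X)-\real(y)\|_2 \le r\}$, and as $y$ ranges over $\C^N$, $\real(y)$ ranges over all of $\R^{2N}$. Taking the supremum in the definition of $\LL$ yields $\LL(\real(X),r) = \LL(X,r)$.

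For part (\ref{part: PE real}), the one genuine observation is that $\real(E^\perp) = \real(E)^\perp$, where the first perpendicular is taken in $\C^N$ and the second in $\R^{2N}$. The nontrivial direction is to argue that real-orthogonality to $\real(E)$ already forces complex-orthogonality to $E$: if $\real\langle z,w\rangle=0$ for every $w\in E$, then, because $E$ is a \emph{complex} subspace, $iw\in E$ as well, so $\real\langle z,iw\rangle = \Im\langle z,w\rangle = 0$ also vanishes, giving $\langle z,w\rangle = 0$. Dimension counting ($\dim_\R \real(E) = 2\dim_\C E$) confirms equality. With this in hand, part (\ref{part: PE real}) follows by decomposing $z = P_E z + P_{E^\perp}z$, applying the $\R$-linear map $\real$ to get $\real(z) = \real(P_E z)+\real(P_{E^\perp} z)$, and invoking uniqueness of orthogonal decomposition in $\R^{2N}$ since the two summands lie in $\real(E)$ and $\real(E)^\perp$ respectively.

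The only mild subtlety is the interplay between complex and real orthogonality; once that is dispatched via the $w\leftrightarrow iw$ trick, everything else is formal.
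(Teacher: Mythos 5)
Your proof is correct. The paper states this lemma without proof (it is presented as following directly from the definition of $\real$), and your argument---viewing $\real$ as an $\R$-linear isometry of $\C^N$ with the real inner product $\real\pr{z}{w}$ onto $\R^{2N}$, and settling the complex-versus-real orthogonality of $\real(E)$ via the $w \mapsto iw$ observation (in fact only the easy inclusion $\real(E^\perp) \subseteq \real(E)^\perp$ is needed for the uniqueness-of-decomposition step)---is precisely the verification the paper leaves to the reader.
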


The symmetrization lemma allows randomizing all coordinates.

\begin{lemma}[Randomizing all coordinates]					\label{lem: Z real}
  Consider a random vector $Z=X+iY \in \C^N$ whose imaginary part $Y \in \R^N$ is fixed.
  Set $\widehat{Z} = \begin{bmatrix}X_1 \\ X_2 \end{bmatrix} \in \R^{2N}$ where $X_1$ and $X_2$ are independent copies of $X$.
  Let $E$ be a subspace of $\C^N$. Then
  $$
  \LL(P_E Z, r) \le \left( \LL(P_{\real(E)} \widehat{Z}, 2r) \right)^{1/2}, \quad r \ge 0.
  $$
\end{lemma}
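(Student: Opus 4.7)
\medskip

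\noindent\textbf{Proof plan.}
My plan is to first transport the whole statement to the real vector space $\R^{2N}$ via the operation $\real(\cdot)$, absorb the fixed imaginary part $Y$ as a deterministic shift, and then use an independent-copies argument to put randomness into the lower half. Write $F := \real(E)$. Using part~(\ref{part: LL real}) and then part~(\ref{part: PE real}) of Lemma~\ref{lem: real},
\[
\LL(P_E Z, r) = \LL(\real(P_E Z), r) = \LL(P_F \real(Z), r) = \LL\!\left(P_F \begin{bmatrix} X \\ Y \end{bmatrix}, r\right).
\]
Since $Y$ is deterministic, the vector $P_F\begin{bmatrix}0\\Y\end{bmatrix}$ is a fixed shift, which can be absorbed into the infimum over translations defining $\LL$. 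Thus
\[
\LL(P_E Z, r) = \LL\!\left(P_F \begin{bmatrix} X \\ 0 \end{bmatrix}, r\right).
\]

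Next I would show the companion identity $\LL(P_F\begin{bmatrix}0\\X\end{bmatrix}, r) = \LL(P_E Z, r)$. The point is that $\begin{bmatrix} 0 \\ X \end{bmatrix} = \real(iX)$, so by part~(\ref{part: PE real}) and $\C$-linearity of $P_E$,
\[
P_F \begin{bmatrix} 0 \\ X \end{bmatrix} = \real(P_E(iX)) = \real(i\,P_E X).
\]
Writing $P_E X = U + iV$, the map $U+iV \mapsto -V + iU$ sends $\real(P_E X) = \begin{bmatrix}U\\V\end{bmatrix}$ to $\begin{bmatrix}-V\\U\end{bmatrix}$, which is an orthogonal transformation of $\R^{2N}$ and hence preserves the L\'evy concentration function. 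Combined with the previous paragraph, this gives $\LL(P_F\begin{bmatrix}0\\X\end{bmatrix}, r) = \LL(\real(P_E X), r) = \LL(P_E Z, r)$.

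Finally, I would run the independence/triangle-inequality step that recovers the factor $2r$ and the square root. Given $\eta > 0$, pick $v_0, v_1 \in \R^{2N}$ with
\[
\Pr{\Big\|P_F \begin{bmatrix} X_1 \\ 0\end{bmatrix} - v_0\Big\|_2 \le r} \ge \LL(P_E Z, r) - \eta, \qquad
\Pr{\Big\|P_F \begin{bmatrix} 0 \\ X_2\end{bmatrix} - v_1\Big\|_2 \le r} \ge \LL(P_E Z, r) - \eta.
\]
Because $P_F \widehat Z = P_F \begin{bmatrix} X_1 \\ 0\end{bmatrix} + P_F\begin{bmatrix}0\\X_2\end{bmatrix}$ and $X_1, X_2$ are independent, the triangle inequality and independence yield
\[
\LL(P_F \widehat Z, 2r) \ge \Pr{\Big\|P_F \widehat Z - (v_0 + v_1)\Big\|_2 \le 2r} \ge (\LL(P_E Z, r) - \eta)^2.
\]
Letting $\eta \to 0$ and taking square roots proves the lemma.

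The computations are straightforward; the one step that requires a bit of care is the rotational identity $\LL(P_F\begin{bmatrix}0\\X\end{bmatrix}, r) = \LL(P_F\begin{bmatrix}X\\0\end{bmatrix}, r)$, which is what makes both ``halves'' of $\widehat Z$ contribute a factor of $\LL(P_E Z, r)$ rather than, say, just one of them. Once that is in hand the symmetrization via independent copies is routine.
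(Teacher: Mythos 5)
Your proof is correct and takes essentially the same route as the paper: both arguments symmetrize with two independent copies of $X$, exploit that multiplication by $i$ (your rotation $\begin{bmatrix}U\\V\end{bmatrix}\mapsto\begin{bmatrix}-V\\U\end{bmatrix}$ in $\R^{2N}$) is an isometry commuting with $P_E$, and then combine independence with the triangle inequality to square the probability at radius $2r$. The only cosmetic difference is that you pass to $\R^{2N}$ at the outset and use two near-optimal centers $v_0,v_1$ with an $\eta$-limit, whereas the paper fixes a single center $a \in \C^N$, bounds $p^2$ with the related centers $b$ and $ib$, and converts to the real setting at the end.
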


\begin{proof}
Recalling the definition of the concentration function, in order to bound $\LL(P_E Z, r)$
we need to choose arbitrary $a \in \C^N$ and find a uniform bound on the probability
$$
p := \Pr{ \|P_E Z - a\|_2 \le r }.
$$
By assumption, the random vector $Z = X + iY$ has fixed imaginary part $Y$.
So it is convenient to express the probability as
$$
p = \Pr{ \|P_E X - b\|_2 \le r }
$$
where $b = a - P_E(iY)$ is fixed. Let us rewrite this identity
using independent copies $X_1$ and $X_2$ of $X$ as follows:
$$
p = \Pr{ \|P_E X_1 - b\|_2 \le r } = \Pr{ \|P_E (iX_2) - ib\|_2 \le r }.
$$
(The last equality follows trivially by multiplying by $i$ inside the norm.)
Using the independence of $X_1$ and $X_2$ and the triangle inequality, we obtain
\begin{align*}
p^2
  &= \Pr{ \|P_E X_1 - b\|_2 \le r \text{ and } \|P_E (iX_2) - ib\|_2 \le r } \\
  &\le \Pr{ \|P_E (X_1 + iX_2) - b - ib \|_2 \le 2r } \\
  &\le \LL(P_E (X_1 + iX_2), 2r).
\end{align*}
Further, using part~\ref{part: LL real} and then part~\ref{part: PE real} of Lemma~\ref{lem: real}, we
see that
\begin{align*}
\LL(P_E (X_1 + iX_2), 2r)
 &= \LL(P_{\real(E)} (\real(X_1 + iX_2)), 2r) \\
 & = \LL(P_{\real(E)} \widehat{Z}, 2r).
\end{align*}
Thus we showed that $p^2 \le \LL(P_{\real(E)} \widehat{Z}, 2r)$ uniformly in $a$.
By definition of the L\'evy concentration function, this completes the proof.
\end{proof}

\subsubsection{Bounding the distances below}

We are ready to control the distances appearing in \eqref{eq: dist as proj}.

\begin{lemma}[Distance between random vectors and subspaces]		\label{lem: distance}
  For every $j \in [n]$ and $\tau>0$, we have
  \begin{equation}         \label{eq: distance}
  \Pr{\dist(B'_j, H'_j) < \tau \sqrt{\e' m}} \le (C  \tau)^{\e' m}.
  \end{equation}
\end{lemma}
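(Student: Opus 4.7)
The plan is to identify the distance as the norm of an orthogonal projection, then apply the density-of-projection machinery of Theorem~\ref{thm: dens PX} after passing to real coordinates via the symmetrization Lemma~\ref{lem: Z real}. Let me fix $j$, set $E_j := (H'_j)^\perp \subset \C^{(1+\e')m-1}$, and use the representation $\dist(B'_j, H'_j) = \|P_{E_j} B'_j\|_2$ from \eqref{eq: dist as proj}. Since $H'_j$ has complex dimension $m-1$ and sits in $\C^{(1+\e')m-1}$, the subspace $E_j$ has complex dimension $\e' m$. Because $B'_j$ and $H'_j$ are independent (the whole point of deleting the $j$-th coordinate), it suffices to prove the desired bound on $\|P_{E_j} B'_j\|_2$ conditionally on $H'_j$, treating $E_j$ as a fixed complex subspace of complex dimension $\e' m$.

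Next I would apply Lemma~\ref{lem: Z real} to transfer the problem to $\R$. Since only the real parts of the coordinates of $B'_j$ are random (and each has density bounded by $1$), set $\widehat{B'_j} = \begin{bmatrix} X_1 \\ X_2 \end{bmatrix} \in \R^{2((1+\e')m-1)}$ where $X_1, X_2$ are independent copies of the real part of $B'_j$. Then
\[
  \LL(P_{E_j} B'_j, r) \le \bigl(\LL(P_{\real(E_j)} \widehat{B'_j}, 2r)\bigr)^{1/2}
\]
for every $r \ge 0$. The key feature is that $\widehat{B'_j}$ now has \emph{all} of its coordinates independent and absolutely continuous with densities bounded by $1$, and $\real(E_j) \subset \R^{2((1+\e')m-1)}$ is a real subspace of real dimension $d := 2\e' m$.

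Now I would invoke Theorem~\ref{thm: dens PX} applied to the vector $\widehat{B'_j}$ and the projection $P_{\real(E_j)}$: the density of $P_{\real(E_j)} \widehat{B'_j}$ is bounded by $C^{2\e'm}$ almost everywhere. To convert this density bound to a L\'evy concentration bound, I integrate the density over a Euclidean ball of radius $2r$ in the $d$-dimensional subspace $\real(E_j)$; since a ball of radius $\rho \sqrt{d}$ in $d$ dimensions has volume at most $(c\rho)^d$, choosing $r = \tau \sqrt{\e' m}$ gives $2r = \sqrt{2}\,\tau \sqrt{d}$ and therefore
\[
  \LL(P_{\real(E_j)} \widehat{B'_j}, 2r) \le C^{2\e'm} \cdot (c\tau)^{2\e'm} = (C_1 \tau)^{2\e'm}.
\]
Combining with the symmetrization inequality yields $\LL(P_{E_j} B'_j, \tau\sqrt{\e'm}) \le (C_1 \tau)^{\e' m}$, and taking the center equal to the origin gives the desired bound on $\Pr{\|P_{E_j} B'_j\|_2 < \tau \sqrt{\e' m}}$. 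Integrating out the conditioning on $H'_j$ concludes the proof.

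The only delicate point is bookkeeping the dimension: one must note that passing to the real picture doubles the relevant subspace dimension (to $2\e'm$), and the square-root loss from Lemma~\ref{lem: Z real} exactly compensates this doubling to recover the clean exponent $\e'm$. No structural obstacle arises because, as the authors emphasized, the bound from Theorem~\ref{thm: dens PX} is completely uniform over the subspace $E_j$, so no arithmetic information about $H'_j$ is needed.
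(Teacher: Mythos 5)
Your argument is correct and follows essentially the same route as the paper's proof: representing $\dist(B'_j,H'_j)=\|P_{E_j}B'_j\|_2$, conditioning on the independent subspace $E_j$, symmetrizing via Lemma~\ref{lem: Z real}, applying Theorem~\ref{thm: dens PX} to get the density bound $C^{2\e' m}$, and integrating over a ball so that the square-root loss from symmetrization cancels the doubled dimension. Your dimension and radius bookkeeping matches the paper's (which even contains a typo $\sqrt{\e n}$ where $\sqrt{\e' m}$ is meant), so nothing further is needed.
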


\begin{proof}
Representing the distances via projections of $B_j'$ onto the subspaces $E_j = (H'_j)^\perp$
as in \eqref{eq: dist as proj}, and using the definition of the L\'evy concentration function, we have
$$
p_j := \Pr{\dist(B'_j, H'_j) < \tau \sqrt{\e' m}}
\le \LL(P_{E_j} B'_j, \, \tau \sqrt{\e' m}).
$$
Recall that $B_j'$ and $E_j$ are independent, and let us condition on $E_j$.
Lemma~\ref{lem: Z real} implies that
$$
p_j \le \left(\LL(P_{\real(E_j)} \widehat{Z}, \, 2 \tau \sqrt{\e' m}) \right)^{1/2}
$$
where $\widehat{Z}$ is a random vector with independent coordinates
that have densities bounded by 1.

The space $H'_j$ has codimension $\e' m$; thus $E_j$ has dimension $\e' m$
and $\real(E_j)$ has dimension $2\e' m$.
By Theorem \ref{thm: dens PX}, the density of $P_{\real(E_j)} \widehat{Z}$ is bounded by $C^{2\e' m}$.
Integrating the density over a ball of radius $2 \tau \sqrt{\e' m}$ in the subspace $\real(E_j)$
which has volume $(C\tau)^{2\e' m}$, we conclude that
$$
\LL(P_{\real(E_j)} \widehat{Z}, \,  2 \tau \sqrt{\e n}) \le (C  \tau)^{2 \e' m}.
$$
It follows that
$$
p_j \le (C  \tau)^{\e' m},
$$
as claimed. The proof of Lemma~\ref{lem: distance} is complete.
\end{proof}

\subsection{$B$ is bounded below on a large subspace $E^+$}

\subsubsection{Using the second moment inequality}		\label{s: dist into smi}

Denote $p=\e' m/4$, and let
\[
  Y_j=\e' m \cdot \dist^{-2} (B'_j, H'_j) \quad \text{for } j \in [m].
\]
By Lemma \ref{lem: distance}, for any $s>0$,
\[
 \Pr{Y_j>s} \le \left( \frac{C}{s} \right)^{2p}.
\]
Using Fubini's theorem, we conclude that
\[
 \E Y_j^p \le 1+ p \int_1^{\infty} s^{p-1} \cdot \P(Y_j>s) \, ds
 \le 1+\bar{C}^p,
\]
so $\norm{Y_j}_p \le C$. This is another instance where the assumption of the bounded density of the entries leads to a simplification of the proof. For a general distribution of entries, the event $\dist(B'_j, H'_j)=0$ may have a positive probability, and so $\norm{Y_j}_p$ may be infinite.

The bound on $\norm{Y_j}_p$ yields $\norm{ \sum_{j=1}^m Y_j}_p \le Cm$. Applying Markov's inequality, we get
\begin{align*}
  \P \left( \sum_{j=1}^m \dist^{-2} (B_j', H_j') \ge \frac{1}{\e' t}   \right)
  &=\P \left( \sum_{j=1}^m Y_j \ge \frac{m}{ t}   \right) \\
  &\le \frac{\E (\sum_{j=1}^m Y_j)^p}{(m/t)^p}
  \le (C t)^p
\end{align*}
for any $t>0$.

This estimate for $t=\tau^2$ combined with inequality \eqref{eq: second negative moment}
shows that
the event
\begin{equation}         \label{eq: sum moments small}
\mathcal{E}_1 := \left\{ \sum_{i=1}^m s_i(B)^{-2} \le \frac{1}{\tau^2 \e'} \right\}
\end{equation}
is likely: $\P((\mathcal{E}_1)^c) \le (C' \tau)^{\e' m/2}$.

\subsubsection{A large subspace $E^+$ on which $B$ is bounded below}		\label{s: E+}

Fix a parameter $\tau>0$ for now, and assume that the event \eqref{eq: sum moments small} occurs.
By Markov's inequality, for any $\d >0$ we have
$$
\Big| \big\{ i: \; s_i(B) \le \d \sqrt{m} \big\} \Big|
= \Big| \big\{ i: \; s_i(B)^{-2} \ge \frac{1}{\d^2m} \big\} \Big|
\le \frac{\d^2 m}{\tau^2 \e'}.
$$
 Setting $\d =  \tau \e'/10$, we have
\begin{equation}         \label{eq: small singular values}
\Big| \big\{ i: \; s_i(B) \le  \frac{\tau \e'}{10} \sqrt{n} \big\} \Big| \le \frac{ \e' m}{100}.
\end{equation}
Let $v_i(B)$ be the right singular vectors of $B$, and
consider the (random) orthogonal decomposition
$\C^n = E^- \oplus E^+$,
where
\begin{align*}
E^- &= \Span\{v_i(B): \; s_i(B) \le \frac{ \tau \e'}{10} \sqrt{m}\}, \\
E^+ &= \Span\{v_i(B): \; s_i(B) > \frac{ \tau \e'}{10} \sqrt{m}\}.
\end{align*}
Inequality \eqref{eq: small singular values} means that $\dim_{\C}(E^-) \le \frac{ \e' m}{100}$.

Let us summarize. Recall that $\e' m=\e n/2$ and set $\t=(\e s)^2$ for some $s \in (0,1)$.
We proved that the event
\[
\DD_{E^-} := \left\{ \dim(E^-) \le \frac{ \e' m}{100} \right\}
\]
satisfies
\begin{equation}							\label{eq: E- large}
\P((\DD_{E^-})^c) \le (C_2 \tau)^{\e' m}= (C_3 \e s)^{\e n},
\end{equation}
so $E^-$ is likely to be a small subspace and $E^+$ a large subspace.
The choice of $\t$ was made to create the factor $\e^{\e n}$ in the probability bound above ensuring that we can suppress the factor $\binom{n}{\e n}$ arising from the union bound.
Moreover, by definition, $B$ is nicely bounded below on $E^+$:
\begin{equation}							\label{eq: B bounded below}
\inf_{x \in S_{E^+}} \|Bx\|_2 \ge \frac{ \tau \e'}{10} \sqrt{m}
\ge \frac{ s^2 \e^3}{80} \sqrt{n}.
\end{equation}

\subsection{$G$ is bounded below on the small complementary subspace $E^-$}		\label{s: G below on E-}

The previous argument allowed us to handle the subspace $E_{+}$ whose dimension is only slightly lower than $m$. Yet, it provided no information about the behavior of the infimum of $\norm{Bx}_2$ over the unit vectors from the complementary subspace $E_{-}$. To get the lower bound for this infimum, we will use the submatrix $G$ we have put aside. Recall that although the space $E_{-}$ is random, it depends only on $B$, and thus is independent of $G$. Conditioning on the matrix $B$, we can regard this space as fixed.
Our task therefore, is to establish a lower bound on  $\norm{Gx}_2$ over the unit vectors from $E_{-}$. To this end, we can use the Lemma \ref{l: small subspace}. However, this lemma establishes the desired bound probability at least $1- \exp(-c' \e' m)$. This probability is insufficient for our purposes (remember, the probability for a fixed set $I \subset[n]$ is multiplied by $\binom{n}{\e n} \sim (e/\e)^{\e n}$.)

The probability bound is easy to improve in case of the bounded densities. Replacing the small ball probability estimate for a fixed vector used in the proof of Lemma \ref{l: small subspace} with Lemma \ref{lem: fixed vector}, we derive the following lemma.
\begin{lemma}[Lower bound on a subspace]		\label{lem: G bounded below on E}
  Let $M \ge 1$ and $\mu \in (0,1)$.
  Let $E$ be a fixed subspace of $\C^m$ of dimension at most $\e' m/100$.
  Then, for every $\rho >0$, we have
  \begin{equation}							\label{eq: G bounded below on E}
  \Pr{\inf_{x \in S_E} \|Gx\|_2 < \rho \sqrt{\e' m} \text{ and } \BB_{G,M}}
  \le \left(   \frac{C M \rho^{0.98}}{\e'^{0.01}}  \right)^{\e' m}.
  \end{equation}
\end{lemma}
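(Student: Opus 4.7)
The plan is to run the standard $\varepsilon$-net argument on the sphere $S_E$, combining the small-ball estimate of Lemma~\ref{lem: fixed vector} with the operator-norm bound supplied by $\BB_{G,M}$. Since $E$ is \emph{fixed}, all randomness sits in the entries of $G$, which are independent by Assumption~\ref{A}. The hypothesis $\dim_{\C}(E) \le \e' m/100$ is crucial: it keeps the net small compared to the exponent $\e' m$ arising from Lemma~\ref{lem: fixed vector}, which leaves enough slack to absorb the factor coming from the operator-norm approximation.

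First, I would view $E$ as a real subspace of real dimension $2k$, where $k = \dim_{\C}(E) \le \e' m/100$. By Lemma~\ref{l: eps-net}, for any $\eta \in (0,1]$ the unit sphere $S_E$ admits an $\eta$-net $\NN$ with
\[
|\NN| \le (3/\eta)^{2k} \le (3/\eta)^{\e' m/50}.
\]
For each fixed $y \in \NN \subset S_{\C}^{m-1}$, Lemma~\ref{lem: fixed vector} applied to the $\e' m \times m$ matrix $G$ gives $\Pr{\|Gy\|_2 \le \theta \sqrt{\e' m}} \le (C_0 \theta)^{\e' m}$. A union bound then shows that with probability at least $1 - (3/\eta)^{\e' m/50}(C_0 \theta)^{\e' m}$, every $y \in \NN$ satisfies $\|Gy\|_2 > \theta \sqrt{\e' m}$.

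The approximation step uses $\BB_{G,M}$: there $\|G\| \le M\sqrt{n}$, so for any $x \in S_E$ and any $y \in \NN$ with $\|x-y\|_2 \le \eta$ we have $\|Gx\|_2 \ge \|Gy\|_2 - M\sqrt{n}\,\eta$. I would then choose $\eta = \theta \sqrt{\e' m}/(2M\sqrt{n})$, which enforces $\|Gx\|_2 \ge \theta \sqrt{\e' m}/2$ on $S_E$ whenever the net event does not occur. Setting $\rho := \theta/2$ reduces the lemma to bounding the product of the net cardinality and the small-ball probability.

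The last step is the bookkeeping of exponents. Plugging in $\eta$ and using $\sqrt{n/m} = 1/\sqrt{1-\e} = O(1)$ to absorb an absolute constant, the bound becomes
\[
\bigl(CM/(\theta \sqrt{\e'})\bigr)^{\e' m/50} (C_0 \theta)^{\e' m}
= \Bigl[(CM)^{1/50}\, C_0\, (\e')^{-1/100}\, \theta^{49/50}\Bigr]^{\e' m}.
\]
Since $M \ge 1$ we can replace $M^{1/50}$ by $M$ inside the constant, and the identities $49/50 = 0.98$ and $1/100 = 0.01$ together with $\rho = \theta/2$ yield precisely the claimed bound. I do not foresee any real obstacle here: the exponents $0.98$ and $0.01$ in the statement are engineered directly by the dimension bound $\dim(E) \le \e' m/100$ together with the net cardinality $(3/\eta)^{2k}$, and the balancing choice of $\eta$ is dictated by the operator-norm bound. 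The only reason the argument works at all is the strong per-vector small-ball bound of Lemma~\ref{lem: fixed vector}, with exceptional probability $\e' m$ rather than $m$, which in turn relies on the bounded-density assumption.
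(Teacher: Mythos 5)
Your argument is precisely the proof the paper intends: it explicitly says Lemma~\ref{lem: G bounded below on E} ``follows the same lines'' as Lemma~\ref{l: small subspace} with the per-vector small-ball estimate replaced by Lemma~\ref{lem: fixed vector}, and your net-plus-union-bound bookkeeping (net of size $(3/\eta)^{2k}$ with $2k\le \e' m/50$, balancing $\eta$ against the operator norm) reproduces the exponents $0.98$ and $0.01$ exactly as engineered. The only loose point is the throwaway claim $\sqrt{n/m}=1/\sqrt{1-\e}=O(1)$, which needs $\e$ bounded away from $1$ (equivalently $\e'\asymp\e$); this is the same regime the paper itself implicitly works in when it later converts $\e'$ into $\e$ up to constants, so it is cosmetic rather than a substantive gap.
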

The proof of this lemma follows the same lines as that of Lemma \ref{l: small subspace} and is left to a reader.

Lemma \ref{lem: G bounded below on E} provides the desired bound for the space $E_{-}$.
Recall that $m=(1-\e)n$ and $\e'=\e/2(1-\e)$.
 Namely, if the events $\BB_{G,M}$ and $\DD_{E_{-}}$ occur, then the event
\[
  \LL_{E_{-}}:= \left \{ \inf_{x \in S^{m-1} \cap E_{-}} \norm{Gx}_2 \ge \rho \sqrt{\e' m} \right \}
\]
holds with probability at least $1-  \left(   \frac{C M \rho^{0.98}}{\e'^{0.01}}  \right)^{\e' m}$. This is already sufficient since choosing a sufficiently small $\rho$, say $\rho=(s\e')^3$ with any $s \in (0,1)$, we see that
\[
  \P(\LL_{E_{-}}^c) \le (CM s^3 \e^{2.9})^{\e n/2},
 \]
 so again we can suppress the factor $\binom{n}{\e n}$ arising from the union bound.

\subsection{Extending invertibility from subspaces to the whole space.}

Assume that the events $\DD_{E_{-}}$ and $\LL_{E_{-}}$ occur. We know that if $\BB_{A,M}$ occurs, then this is likely:
\[
 \P( \BB_{A,M} \cap \DD_{E_{-}} \cap \LL_{E_{-}}) \ge \P(\BB_{A,M})- (Cs)^{\e n}.
\]

Under this assumption, we have uniform lower bounds on $\norm{A x}_2$ on the unit speres of both $E_{+}$ and $E_{-}$. The extension of these bounds to the whole unit sphere of $\C^m$ is now deterministic. It relies on the following lemma from linear algebra.

\begin{lemma}[Decomposition]			\label{lem: A decomposition}
  Let $A$ be an $m \times n$ matrix. Let us decompose $A$ as
  \[
  A =
  \begin{bmatrix}
  \phantom{X} B \phantom{X} \\ \phantom{X} G \phantom{X}
  \end{bmatrix},
  \quad B \in \C^{m_1 \times n}, \; G \in \C^{m_2 \times n}, \; m=m_1+m_2.
  \]
  Consider the orthogonal decomposition
  $\C^n = E^- \oplus E^+$
  where $E^-$ and $E^+$ are eigenspaces\footnote{In other words, $E^-$ and $E^+$
  are the spans of two disjoint subsets of right singular vectors of $B$.}
   of $B^* B$. Denote
  \[
  s_A = s_{\min} (A), \; s_B = s_{\min}(B|_{E^+}), \; s_G = s_{\min}(G|_{E^-}).
  \]
  Then
  \begin{equation}         \label{eq: A decomposition}
  s_A \ge \frac{s_B s_G}{4 \|A\|}.
  \end{equation}
\end{lemma}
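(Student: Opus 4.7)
The key structural fact I would exploit first is that the splitting $\C^n = E^- \oplus E^+$ into eigenspaces of $B^*B$ is the \emph{right singular vector} splitting for $B$: if $B = U\Sigma V^*$ is an SVD, then $E^\pm$ are spans of disjoint subsets of the columns of $V$, and $B$ sends them into orthogonal spans of left singular vectors. Consequently, for every $x = x^+ + x^-$ with $x^\pm \in E^\pm$ one has the Pythagorean identity
\[
\|Bx\|_2^2 \;=\; \|Bx^+\|_2^2 + \|Bx^-\|_2^2 \;\ge\; s_B^2 \|x^+\|_2^2,
\]
while $G$ carries no such orthogonality, so for $G$ one can only use the triangle inequality
\[
\|Gx\|_2 \;\ge\; \|Gx^-\|_2 - \|Gx^+\|_2 \;\ge\; s_G \|x^-\|_2 - \|A\|\,\|x^+\|_2,
\]
where I used $\|G\| \le \|A\|$, which follows from $\|Gy\|_2^2 \le \|By\|_2^2 + \|Gy\|_2^2 = \|Ay\|_2^2$.

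Having these two lower bounds, I would run a case analysis on an arbitrary unit vector $x$, with threshold $\alpha := s_G/(4\|A\|)$. Note first that $s_G \le \|G\| \le \|A\|$ so $\alpha \le 1/4 < 1/2$. In the first case $\|x^+\|_2 \ge \alpha$, the Pythagorean bound gives
\[
\|Ax\|_2 \;\ge\; \|Bx\|_2 \;\ge\; s_B \|x^+\|_2 \;\ge\; s_B\alpha \;=\; \frac{s_B s_G}{4\|A\|}.
\]
In the second case $\|x^+\|_2 < \alpha$, I get $\|x^-\|_2 \ge \sqrt{1-\alpha^2} \ge 1/2$, so the triangle-inequality bound yields
\[
\|Ax\|_2 \;\ge\; \|Gx\|_2 \;\ge\; s_G \cdot \tfrac{1}{2} - \|A\|\cdot\alpha \;=\; \tfrac{s_G}{2} - \tfrac{s_G}{4} \;=\; \tfrac{s_G}{4} \;\ge\; \frac{s_B s_G}{4\|A\|},
\]
where the last step uses $s_B \le \|B\| \le \|A\|$. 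Taking the infimum over unit $x$ gives the claimed bound on $s_A = s_{\min}(A)$.

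The proof is essentially two lines once the orthogonality of $Bx^+$ and $Bx^-$ is noted; the only thing requiring any care is picking the threshold $\alpha$ so that both case estimates meet the same lower bound, and verifying the side conditions $s_G \le \|A\|$ and $s_B \le \|A\|$ needed to tie the two cases together. I do not anticipate any genuine obstacle beyond this bookkeeping.
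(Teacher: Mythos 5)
Your proof is correct and follows essentially the same route as the paper: the same orthogonal splitting $x=x^++x^-$, the Pythagorean bound for $B$, the triangle inequality with $\|G\|\le\|A\|$ for $G$, and a case analysis on $\|x^+\|_2$. The only cosmetic difference is that you fix the threshold $\alpha=s_G/(4\|A\|)$ in advance, whereas the paper keeps a free parameter $\theta$ and optimizes at the end before using $s_B\le\|A\|$; both yield the stated bound.
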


\begin{proof}
Let $x \in S^{n-1}$. We consider the orthogonal decomposition
\[
x = x^- + x^+, \quad x^- \in E^-, \, x^+ \in E^+.
\]
We can also decompose $Ax$ as
\[
\|Ax\|_2^2 = \|Bx\|_2^2 + \|Gx\|_2^2.
\]
Let us fix a parameter $\theta \in (0,1/2)$ and consider two cases.

\smallskip

{\em Case 1: $\|x^+\|_2 \ge \theta$.} Then
\[
\|Ax\|_2 \ge \|Bx\|_2 \ge \|Bx^+\|_2 \ge s_B \cdot \theta.
\]

{\em Case 2: $\|x^+\|_2 < \theta$.}
In this case, $\|x^-\|_2 = \sqrt{1-\|x^+\|_2^2} \ge 1/2$.
Thus
\begin{align*}
\|Ax\|_2
  &\ge \|Gx\|_2 \ge \|Gx^-\|_2 - \|Gx^+\|_2 \\
  &\ge \|Gx^-\|_2 - \|G\| \cdot \|x^+\|_2
  \ge s_G \cdot \frac{1}{2} - \|G\| \cdot \theta.
\end{align*}
Using that $\|G\| \le \|A\|$, we conclude that
\[
s_A = \inf_{x \in S^{n-1}} \|Ax\|_2 \ge \min \Big( s_B \cdot \theta, \; s_G \cdot \frac{1}{2} - \|A\| \cdot \theta \Big).
\]
Optimizing the parameter $\theta$, we conclude that
\[
s_A \ge \frac{s_B s_G}{2 (s_B + \|A\|)}.
\]
Using that $s_B$ is bounded by $\|A\|$, we complete the proof.
\end{proof}
Combining Lemma \ref{lem: A decomposition} with the previously obtained bounds \eqref{eq: B bounded below} and \eqref{eq: G bounded below on E}, we complete the proof of Proposition \ref{prop: reduction}, and thus, the no-gaps delocalization Theorem \ref{thm: delocalization continuous}.

\section{Applications of the no-gaps delocalization}

\subsection{Erd\H{o}s-R\'enyi graphs and their adjacency matrices}
In this section we consider two applications of the no-gaps delocalization to the spectral properties of the Erd\H{o}s-R\'enyi random graphs. Let $p \in (0,1)$. Consider a graph $G=(V,E)$ with $n$ vertices such that any pair of vertices is connected by an edge with probability $p$, and these events are independent for different edges. This model of a random graph is called an Erd\H{o}s-R\`enyi or $G(n,p)$ graph. Let $A_G$ be the adjacency matrix matrix of a graph $G$, i.e., the matrix of zeros and ones with $1$ appearing on the spot $(i,j)$ whenever the vertices $i$ and $j$ are connected.
 We will need several standard facts about the Erd\H{o}s-R\'enyi graphs listed in the followiing proposition.
 \begin{proposition}  \label{prop: G(n,p)}
 Let $p \ge C \frac{\log n}{n}$ for some $C>1$.
 Let $G(V,E)$ be a $G(n,p)$ graph. Then $G$ has the following properties with probability $1-o(1)$.
 \begin{enumerate}
   \item Let $R \subset V$ be an independent set, i.e., no two vertices from $R$ are connected by an edge. Then
       \[
        |R| \le C \frac{\log n}{p}.
       \]

   \item Let $P,Q \subset V$ be disjoint sets of vertices with
   \[
    |P|, |Q| \ge C \frac{\log n}{p}.
   \]
   Then  there is an edge connecting a vertex from $P$ and a vertex from $Q$.
   \item The degree of any vertex $v \in V$ is close to its expectation:
   \[
    np-\log n \cdot \sqrt{np} \le d_v \le np + \log n \cdot \sqrt{np}
   \]
   \item Let $\hat{\l}_1 \ge \etc \ge \hat{\l}_n$ be eigenvalues of the normalized adjacency matrix $\hat{A}:=D_G^{-1/2} A_G D_G^{-1/2}$ where $D_G$ is the diagonal matrix $D_G=\text{\rm diag}(d_v, \ v \in V)$. Then
       \[
       \hat{\l}_1=1, \quad \text{and} \quad |\hat{\l}_j| \le \frac{C}{\sqrt{np}}
       \text{ for } j \ge 1.
       \]
   \item For every subset of vertices $J \subset V$, let $\textrm{Non-edges}(J)$ be the set of all pairs of vertices $v,w \in J$ which are not connected by an edge.
       Then
\[
 (1-p) \binom{|J|}{2} - n^{3/2} \le |\text{Non-edges}(J)| \le (1-p) \binom{|J|}{2} + n^{3/2}.
 \]
 \end{enumerate}
 \end{proposition}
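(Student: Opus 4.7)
The plan is to prove each of the five claims in turn. Parts (1), (2), (3) and (5) reduce to Chernoff/Bernstein deviation bounds followed by a union bound; the only genuinely new ingredient is the matrix concentration argument needed for (4).

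For (1), an independent set $R$ of size $k$ requires all $\binom{k}{2}$ potential edges inside $R$ to be absent, which has probability $(1-p)^{\binom{k}{2}} \le \exp(-pk(k-1)/2)$. The union bound over the at most $(en/k)^k$ choices of $R$ yields an event of probability $\exp\bigl(k\log(en/k)-pk(k-1)/2\bigr)$, which is $o(1)$ once $k \ge C\log n/p$ with $C$ large. Part (2) is analogous: for fixed disjoint $P,Q$ with $|P|=a$, $|Q|=b$ both at least $C\log n/p$, the probability of having no $P$--$Q$ edge is $(1-p)^{ab}\le e^{-pab}$, and the union bound over all such pairs contributes at most $\binom{n}{a}\binom{n}{b}$, which is absorbed once $C$ is large enough. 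For (3), $d_v$ is a sum of $n-1$ independent Bernoulli$(p)$ variables, so Bernstein's inequality gives $\P\{|d_v-(n-1)p|\ge t\}\le 2\exp(-t^2/(2np+2t/3))$; taking $t=\log n\cdot\sqrt{np}$ yields $n^{-\omega(1)}$, and a union bound over $v\in V$ finishes the claim after absorbing the $O(p)$ difference between $(n-1)p$ and $np$.

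For (5), fix $J\subset V$ and observe that $|\text{Non-edges}(J)|=\binom{|J|}{2}-|E(J)|$ where $|E(J)|\sim\text{Bin}\bigl(\binom{|J|}{2},p\bigr)$ has mean $p\binom{|J|}{2}$. Hoeffding's inequality gives a deviation bound of $2\exp\bigl(-2n^{3}/\binom{|J|}{2}\bigr)\le 2e^{-4n}$, which comfortably beats the $2^n$ subsets in the union bound.

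The main point is (4), the spectral gap of the normalized adjacency matrix. That $\hat\lambda_1=1$ is a direct check: the vector $D^{1/2}\mathbf 1/\|D^{1/2}\mathbf 1\|_2$ is a unit eigenvector of $\hat A$ with eigenvalue $1$, and Perron--Frobenius forces it to be the top one. For the remaining eigenvalues, write $A=\E A + E$ with $\E A = p(J_n - I_n)$, where $E$ is a symmetric matrix of independent centered entries bounded by $1$ and having variance $p(1-p)$. The non-commutative Bernstein (or Bandeira--van Handel / trace method) inequality then gives $\|E\|\le C\sqrt{np}$ with probability $1-o(1)$ as soon as $np\ge C\log n$. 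Combining this with (3), which implies $\|D^{-1/2}-(np)^{-1/2}I\|\le C(np)^{-1/2}\sqrt{\log n/np}$, one obtains the decomposition
\[
\hat A \;=\; \frac{1}{n}\mathbf 1\mathbf 1^{\!\top}\bigl(1+o(1)\bigr)+\frac{1}{np}E+R,
\]
where the remainder $R$ has operator norm $o(1/\sqrt{np})$. Restricting to the hyperplane orthogonal to the top eigenvector and using Weyl's inequality gives $|\hat\lambda_j|\le C/\sqrt{np}$ for $j\ge 2$, as claimed. The main technical obstacle here is the matrix concentration estimate $\|E\|=O(\sqrt{np})$, which is standard but requires the assumption $p\ge C\log n/n$ to control the variance profile; everything else is bookkeeping with Weyl's inequality and the degree concentration established in (3).
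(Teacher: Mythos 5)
The paper offers no proof to compare against: it states the proposition and explicitly leaves the verification to the reader, so your write-up has to be judged on its own. Parts (1), (2), (3) and (5) are fine: first-moment/union-bound arguments for (1) and (2), Bernstein plus a union bound for (3), and Hoeffding over all $2^n$ subsets for (5) are exactly the right tools, and your bookkeeping there is correct (up to the harmless reuse of the letter $C$, which the statement itself is guilty of). One small remark on (4): Perron--Frobenius needs irreducibility, i.e.\ connectivity of $G$; it is cleaner to note that $I-\hat{A}$ is positive semidefinite deterministically, so all eigenvalues of $\hat{A}$ are at most $1$ and the explicit eigenvector $D_G^{1/2}\mathbf{1}$ shows $\hat{\lambda}_1=1$.

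The genuine gap is in the quantitative part of (4). First, plain matrix (non-commutative) Bernstein only gives $\|E\|=O(\sqrt{np\log n})$, not $O(\sqrt{np})$; to get $O(\sqrt{np})$ all the way down to $np\asymp\log n$ you must invoke the bounded-entry corollary of Bandeira--van Handel, or the Kahn--Szemer\'edi/Feige--Ofek argument, or Vu's bound -- so the parenthetical alternatives carry the whole weight and ``Bernstein'' alone would fail. Second, your control of $D_G$ is inconsistent with what (3) provides: (3) allows $|d_v-np|\le \log n\sqrt{np}$, i.e.\ relative error $\log n/\sqrt{np}$ (which is not even $o(1)$ when $np\asymp\log n$), whereas the bound $\|D_G^{-1/2}-(np)^{-1/2}I\|\le C(np)^{-1/2}\sqrt{\log n/np}$ you quote corresponds to the sharper fluctuation $\sqrt{np\log n}$, which you would have to prove separately. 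More seriously, even with that sharper per-vertex bound the rank-one term spoils the claim $\|R\|=o(1/\sqrt{np})$: writing $pD_G^{-1/2}\mathbf{1}\mathbf{1}^\top D_G^{-1/2}=\frac1n uu^\top$ with $u_v=\sqrt{np/d_v}$, one gets $\big\|\frac1n uu^\top-\frac1n\mathbf{1}\mathbf{1}^\top\big\|\lesssim \|u-\mathbf{1}\|_2/\sqrt{n}\lesssim \max_v|d_v-np|/(np)\lesssim\sqrt{\log n/np}$, which exceeds the target $C/\sqrt{np}$ by a factor $\sqrt{\log n}$ for every $p$ in the admissible range. The standard fix is to replace the worst-case degree bound by the averaged one, $\sum_v(d_v-np)^2\le Cn^2p$ with probability $1-o(1)$ (an easy second-moment estimate), which gives $\|u-\mathbf{1}\|_2\lesssim p^{-1/2}$ and hence a rank-one error $O(1/\sqrt{np})$; combined with $\|D_G^{-1/2}ED_G^{-1/2}\|\le\|E\|/\min_v d_v$ and a Weyl step with the PSD rank-one matrix $\frac1n\mathbf{1}\mathbf{1}^\top$ this yields the stated $|\hat{\lambda}_j|\le C/\sqrt{np}$ for $j\ge2$. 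As written, though, the ``everything else is bookkeeping'' step is precisely where the argument breaks.
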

 We leave the proof of these properties to a reader.

Considering the vector of all ones, we realize that $\norm{A_G} = \Omega(np)$ with high probability.
Hence, when  $p$ is fixed, and $n \to \infty$, this makes the event $\BB_{A_G,M}$ unlikely. However, Remark \ref{rem: shift} shows that we can replace this event by the event $\BB_{A_G- p \mathbf{1}_n,M}$ which holds with probability close to $1$.
Indeed,
\[
  A_G- p \mathbf{1}_n=B-\D,
\]
where $B$ is a symmetric random matrix with centered Bernoulli$(p)$ entries which are independent on and above the diagonal, and $\D$ is the diagonal matrix with i.i.d. Bernoulli$(p)$ entries. Here, $\norm{\D} \le 1$, and by a simple $\e$-net argument, $\norm{B} \le C\sqrt{np}$ with probability close to $1$. This decomposition is reflected in the structure of the spectrum of $A_G$. Let us arrange the eigenvalues of $A_G$ in the decreasing order: $\l_1(G) \ge \etc \ge \l_n(G)$. Then with high probability, $\l_1(G)=\Omega(np)$ and $|\l_j(G)| =O(\sqrt{np})$, where the last equality follows from $\norm{A_G- p \mathbf{1}_n} = O(\sqrt{np})$ and the interlacing property of the eigenvalues.

 Remark \ref{rem: shift} shows that no-gaps delocalization can be extended to the matrix $A_G$ as well.   We will use this result in combination with the $\ell_{\infty}$ delocalization which was established for the $G(n,p)$ graphs by Erd\H{o}s et. al. \cite{EKYY}. They proved that with probability at  least $1-\exp(-c \log^2 n)$, any  unit eigenvector $x$ of $A_G$ satisfies
\begin{equation} \label{eq: l_infty}
 \norm{x}_{\infty} \le \frac{\log^C n}{\sqrt{n}}.
\end{equation}

\subsection{Nodal domains of the eigenvectors of the adjacency matrix}
Let $f$ be an eigenfunction of a self-adjoint linear operator. Define the (strong) nodal domains of $f$ as connected components of the sets where $f$ is positive or negative. Nodal domains of the Laplacian on a compact smooth manifold is a classical object in analysis. If the eigenvalues are arranged in the increasing order, the number of nodal domains of the eigenfunction corresponding to the $k$-th eigenvalue does not exceed $k$ and tends to infinity as $k \to \infty$.

If we consider a finite-dimensional setup, the eigenfunctions of self-adjoint linear operators are replaced by the eigenvectors of symmetric matrices. In 2008, Dekel, Lee, and Linial \cite{DLL} discovered that the nodal domains of the adjacency matrices of $G(n,p)$ graphs behave strikingly different from the eigenfunctions of the Laplacian on a manifold. Namely, they proved that with high probability, the number of nodal domains of any non-first eigenvector of a $G(n,p)$ graph is bounded by a constant depending only on $p$. Later, their result was improved by Arora and Bhaskara \cite{AB}, who showed that with high probability, the number of nodal domains is $2$ for all non-first eigenvectors. Also, Nguyen, Tao, and Vu \cite{NTV} showed that the eigenvector of a $G(n,p)$ graph cannot have zero coordinates with probability close to $1$. These two results in combination mean that for each non-first eigenvector, the set of vertices of a $G(n,p)$ graph splits into the set of positive and negative coordinates both of which are connected.

Let us derive Dekel-Lee-Linial-Arora-Bhaskara theorem from the delocalization properties of an eigenvector. Assume that $p$ is fixed to make the presentation easier.  Let $x \in S^{n-1}$ be a non-first eigenvector of $A_G$, and denote its coordinates by $x_v, \ v \in V$. Let $P$ and $N$ be the largest nodal domains of positive and negative and negative coordinates. Since $x$ is orthogonal to the first eigenvector having all positive coordinates, both $P$ and $N$ are non-empty. Denote $W=V \setminus (P \cup N)$. Our aim is to prove that with high probability, $W= \varnothing$. We start with proving a weaker statement that the cardinality of $W$ is small.
\begin{proposition} \label{prop: W is small}
\[
  |W| \le C \frac{\log n^2}{p^2}
\]
 with probability  $1-o(1)$.
\end{proposition}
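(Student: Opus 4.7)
The plan is a purely combinatorial argument based on property~(2) of Proposition~\ref{prop: G(n,p)}, combined with the theorem of Nguyen, Tao, and Vu cited in the text which guarantees, with probability $1 - o(1)$, that no coordinate of $x$ vanishes. Under that event $V = V^+ \sqcup V^-$, where $V^{\pm}$ denotes the set of positive (resp.\ negative) coordinates of $x$, and so $W = U^+ \sqcup U^-$ with $U^+ := V^+ \setminus P$ and $U^- := V^- \setminus N$. The key observation is that, by the very definition of a nodal domain as a connected component of $G[V^+]$ (resp.\ $G[V^-]$), no edge of $G$ can join two vertices lying in different positive nodal domains, and analogously for negative ones.

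The core of the argument is the following claim, which I will apply to $S = V^+$ and $S = V^-$: for every $S \subset V$, if $C_1(S)$ denotes the largest component of the induced subgraph $G[S]$, then
\[
|S \setminus C_1(S)| \le 4t, \qquad t := C \log n / p,
\]
where $C$ is the constant from property~(2). Summing the two applications yields $|W| \le 8t \le C' (\log n)^2 / p^2$.

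To prove the claim, assume for contradiction that $|S \setminus C_1| > 4t$, and split on the size of $C_1$. If $|C_1| \ge t$, set $A := C_1$ and $B := S \setminus C_1$; then $A$ and $B$ are disjoint subsets of $V$ with $|A|, |B| \ge t$, and no edge of $G$ can run between them (such an edge would lie inside $G[S]$ and would merge the two components), directly violating property~(2). If $|C_1| < t$, every component of $G[S]$ has size strictly less than $t$, and one may greedily glue whole components together until their total size first reaches the threshold $t$, producing a union of components $A$ with $|A| \in [t, 2t)$; setting $B := S \setminus A$, which is again a union of components, one has $|B| > 4t - 2t = 2t \ge t$, and $A$ and $B$ again span no $G$-edges, contradicting property~(2) once more.

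The only delicate point is the bipartitioning in the second case: it uses the upper bound $|C_1| < t$ both to keep the overshoot of $|A|$ below $t$ and to ensure that $B$ remains a union of entire components. Neither the no-gaps delocalization theorem nor the $\ell_\infty$ bound \eqref{eq: l_infty} is needed for this proposition --- property~(2) of Proposition~\ref{prop: G(n,p)} together with the non-vanishing of the coordinates of $x$ already suffices, and in fact delivers the sharper estimate $|W| \le 8 C \log n /p$, which is more than enough for what follows.
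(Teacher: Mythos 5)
Your argument is correct, and it takes a genuinely different (and in fact sharper) route than the paper. The paper's proof uses property (1) of Proposition \ref{prop: G(n,p)} to bound the \emph{number} of positive (resp.\ negative) nodal domains by $C\log n/p$ (one vertex per domain forms an independent set), and then property (2) to bound the \emph{size} of every non-largest domain by $C\log n/p$ (a large non-largest domain together with the largest domain of the same sign would form two large disjoint sets with no crossing edge); multiplying the two bounds gives $|W|\le C'\log^2 n/p^2$. You dispense with property (1) entirely: applying your claim to $S=V^{+}$ and $S=V^{-}$, you show that everything outside the largest component of $G[S]$ has size at most $4C\log n/p$, by producing, whenever this fails, two disjoint subsets of $S$ of size at least $C\log n/p$ with no $G$-edge between them --- either the largest component versus the rest, or, when all components are below the threshold, a greedy grouping of whole components (with overshoot below the threshold) versus the complementary components. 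The delicate bipartition step is handled correctly: since $A$ and $B$ are unions of entire components of $G[S]$, any $G$-edge between them would lie in $G[S]$ and merge two components. Together with the Nguyen--Tao--Vu non-vanishing of the coordinates (which the paper's own proof also needs implicitly, since vertices with $x_v=0$ lie in no nodal domain), this yields $|W|\le 8C\log n/p$, which implies the stated bound because $\log n/p\ge 1$. One point worth making explicit: your sets $A,B$ depend on the graph, so property (2) must be invoked as an event holding simultaneously for all pairs of disjoint sets of the required size; this is indeed what its union-bound proof gives, and it is exactly how the paper uses it as well.
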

\begin{proof}

 Pick a vertex from each positive nodal domain. These vertices cannot be connected by edges as they belong to different connected components. Using Proposition \ref{prop: G(n,p)} (1), we derive that, with high probability, the number of such domains does not exceed $C \frac{\log n}{p}$. The same bound holds for the number of negative nodal domains.

Consider a nodal domain $W_0 \subset W$ and assume that $|W_0| \ge C \frac{\log n}{p}$. If this domain is positive, $|P| \ge C \frac{\log n}{p}$ as well, since $P$ is the largest nodal domain. This contradicts Proposition \ref{prop: G(n,p)} (2) as two nodal domains of the same sign cannot be connected. Combining this with the previous argument, we complete the proof of the proposition.
\end{proof}

Now, we are ready to prove that $W= \varnothing$ with probability $1-o(1)$. Assume to the contrary that there is a vertex $v \in W$, and assume that $x_v<0$. Let $\Gamma(v)$ be the set of its neighbors in $G$. Then $\Gamma(v) \cap N= \varnothing$ as otherwise $v$ would be an element of $N$. Since $x$ is an eigenvector,
\[
 \l x_v = \sum_{u \in \Gamma(v)} x_u
 = \sum_{u \in \Gamma(v)\cap P} x_u  + \sum_{u \in \Gamma(v) \cap W} x_u.
\]
Here $|\l| \le \sqrt{np}$ because $\l$ is a non-first eigenvalue. Then
\begin{align*}
 \norm{x|_{\Gamma(v)}}_1
 &\le \sum_{u \in \Gamma(v)\cap P} x_u  + \sum_{u \in \Gamma(v) \cap W}|x_u|
 \le 2 \sum_{u \in \Gamma(v) \cap W}|x_u| + |\l| \cdot |x_v| \\
 &\le \left( 2 |\Gamma(v) \cap W|+|\l| \right) \cdot \norm{x}_{\infty}.
\end{align*}
By Proposition \ref{prop: W is small} and \eqref{eq: l_infty}, this quantity does not exceed $\log^C n$. Applying \eqref{eq: l_infty} another time, we conclude that
\[
  \norm{x|_{\Gamma(v)}}_2 \le \sqrt{\norm{x|_{\Gamma(v)}}_1 \cdot \norm{x}_{\infty}}
  \le n^{-1/4} \log^C n.
\]
In combination with Proposition \ref{prop: G(n,p)} (3), this shows that a large set $\Gamma(v)$ carries a small mass, which contradicts the no-gaps delocalization. This completes the proof of  Dekel-Lee-Linial-Arora-Bhaskara theorem.

The same argument shows that with high probability, any vertex of the positive nodal domain is connected to the negative domain and vice versa. More precisely, we have the following stronger statement.

\begin{lemma} 
Let $p \in (0,1)$.
Let $x \in S^{n-1}$ be a non-first eigenvector of $A_G$. Let $V=P \cup N$ be the decomposition of $V$ into the positive and negative nodal domains corresponding to $x$. Then with probability greater than $1-\exp(-c' \log^2 n)$, any vertex in $P$ has at least $\frac{c n}{\log^C n}$ neighbors in $N$, and  any vertex in $N$ has at least $\frac{ n}{\log^C n}$ neighbors in $P$.
\end{lemma}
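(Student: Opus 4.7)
The plan is to adapt the proof of the Dekel--Lee--Linial--Arora--Bhaskara theorem given just above, showing that any vertex in $P$ with too few neighbors in $N$ (rather than a vertex in the empty set $W$) leads to a contradiction with the same mechanism: the eigenvector equation forces the $\ell_2$-mass of $x$ on the large set $\Gamma(v)\cap P$ to be tiny, which violates no-gaps delocalization. First I would intersect all of the following high-probability events, each valid with probability at least $1-\exp(-c\log^2 n)$ (or much better): the $\ell_\infty$-bound \eqref{eq: l_infty}; the no-gaps delocalization of Theorem~\ref{thm: delocalization general} extended via Remark~\ref{rem: shift} to $A_G$, applied with $\e$ a small constant depending only on $p$ and $s$ a suitable constant; the degree and spectral bounds from Proposition~\ref{prop: G(n,p)}(3)--(4); and the conclusion $V=P\cup N$ from the theorem just proved. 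On this intersection the rest of the argument is deterministic.

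Fix $v\in P$ and write $\Gamma_P(v):=\Gamma(v)\cap P$, $\Gamma_N(v):=\Gamma(v)\cap N$. Because $x_u>0$ on $P$ and $x_u<0$ on $N$, the eigenvector equation at $v$ rearranges as
\[
\|x|_{\Gamma_P(v)}\|_1 \;=\; \lambda x_v + \|x|_{\Gamma_N(v)}\|_1
\;\le\; \bigl(|\lambda|+|\Gamma_N(v)|\bigr)\cdot \|x\|_\infty .
\]
Suppose for contradiction that $|\Gamma_N(v)|<cn/\log^C n$ for a constant $c$ and exponent $C$ to be chosen. Using $|\lambda|\le C_1\sqrt{np}$ (non-first eigenvalue, by Proposition~\ref{prop: G(n,p)}(4) together with the interlacing for $A_G-p\mathbf 1_n$) and $\|x\|_\infty\le \log^{C'} n/\sqrt n$, together with
\[
 \|x|_{\Gamma_P(v)}\|_2^{\,2} \;\le\; \|x|_{\Gamma_P(v)}\|_1\cdot \|x\|_\infty ,
\]
yields $\|x|_{\Gamma_P(v)}\|_2\le n^{-1/4}\log^{C''}n$ provided $C$ is taken sufficiently large compared to $C'$.

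On the other hand, by the degree bound $|\Gamma(v)|\ge np/2$ of Proposition~\ref{prop: G(n,p)}(3) and the smallness of $|\Gamma_N(v)|$, we have $|\Gamma_P(v)|\ge np/4$, a positive fraction of $n$. Hence no-gaps delocalization applied with $\e=p/4$ and a suitable constant $s=s(p)$ forces $\|x|_{\Gamma_P(v)}\|_2\ge c_0(p)>0$, which contradicts the upper bound for large $n$. The case $v\in N$ is symmetric. The main obstacle is purely bookkeeping: one must choose $C,C',C''$ and the no-gaps parameters $\e,s$ so that the two bounds truly contradict each other and the exceptional probability $(Cs)^{\e n}$ still sits below $\exp(-c'\log^2 n)$; a minor secondary point is to verify the statement "for every non-first eigenvector $x$" by union-bounding over the $n-1$ non-first eigenvectors, a loss easily absorbed into the exponential probability budget.
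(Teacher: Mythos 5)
Your proposal follows essentially the same route as the paper: the eigenvector identity at a vertex with too few neighbors of the opposite sign, combined with the $\ell_\infty$ bound and $\|x|_{\Gamma}\|_2\le\sqrt{\|x|_{\Gamma}\|_1\,\|x\|_\infty}$, forces the mass on the neighborhood (of size $\gtrsim np$) to be small, contradicting no-gaps delocalization applied with constant $\varepsilon, s$. One small correction: with the threshold $|\Gamma_N(v)|<cn/\log^C n$ the term $|\Gamma_N(v)|\,\|x\|_\infty^2$ only yields $\|x|_{\Gamma_P(v)}\|_2\lesssim \log^{-(C-2C')/2}n$, not $n^{-1/4}\log^{C''}n$, but this polylogarithmic decay already contradicts the constant no-gaps lower bound $(\varepsilon s)^6$, so the argument is unaffected (the paper likewise only gets $\sqrt{2}/\log^{C}n$).
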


\begin{proof}
Since $\l$ is a non-first eigenvalue, $|\l| \le c \sqrt{n}$ with high probability. Assume that the vector $x$ is delocalized in both $\ell_{\infty}$ and no-gaps sense.
Let $w \in P$, and assume that 
 \[
  |\Gamma(w) \cap N| \le \frac{n}{\log^{4C} n},
 \]
 where $\Gamma(w)$ denotes the set of neighbors of $w$. We have
\[
 \l x_w= \sum_{v \in \Gamma(w) \cap P} x_v+ \sum_{v \in \Gamma(w) \cap N} x_v,
\]
and as before,
\begin{align*}
 \norm{x|_{\Gamma(w)}}_1 
 &= \sum_{v \in \Gamma(w) \cap P} x_v+ \sum_{v \in \Gamma \cap N} |x_v|
 \le 2 \sum_{v \in \Gamma(w) \cap N} |x_v|+|\l| \cdot |x_w| \\
 &\le 2\frac{n}{\log^{4C} n} \cdot \frac{\log^C n}{\sqrt{n}}+ c \sqrt{n} \cdot \frac{\log^{4C} n}{\sqrt{n}}.
\end{align*}
 Hence,
\[
 \norm{x|_{\Gamma(w)}}_2 \le \sqrt{\norm{x}_{\infty} \cdot \norm{x|_{\Gamma(w)}}_1}
 \le \frac{\sqrt{2}}{\log^C n}, 
\]
which contradicts the no-gaps delocalization, as $|\Gamma(w)|\ge c np$ with high probability.
The proof finishes by application of the union bound over $w$.
\end{proof}

\subsection{Spectral gap of the normalized Laplacian and Braess's paradox}

In some cases, the addition of a new highway to an existing highway system may increase the traffic congestion. This phenomenon discovered in 1968 by Braess became known as Braess's paradox. Since its discovery, a number of mathematical models have been suggested to explain this paradox. We will consider one such model suggested by  Chung et. al. \cite{CYZ}.

We will model the highway system by an Erd\H{o}s-R\`enyi graph $G(n,p)$. The congestion of the graph will be measured in terms of its \emph{normalized Laplacian} which we will define in a moment. Let $A_G$ be the adjacency matrix of the graph $G$, and let $D_G=(d_v, \ v \in V)$ be $n \times n$ the diagonal matrix whose diagonal entries are the degrees of the vertices. The normalized Laplacian of $G$ is defined as
\[
 \LL_G:=I_n-D_G^{-1/2} A_G D_G^{-1/2}.
\]
The normalized Laplacian is a positive semidefinite matrix, so it has a real non-negative spectrum. We will arrange it in the increasing order: $0 = \l_1(\LL_G) \le \ldots \le \l_n(\LL_G)$. The eigenvalue $\l_1(\LL_G)=0$ corresponds to the eigenvector $Y$, whose coordinates are $Y_v=d_v^{1/2}, \ v \in V$.
The quantity $\l_2(\LL_G)$ is called \emph{the spectral gap} of $G$.The spectral gap appears in the Poincare inequality, so it is instrumental in establishing measure concentration properties of various functionals. Also, the reciprocal of the spectral gap defines the relaxation time for a random walk on a graph. In this quality, it can be used to measure the congestion of the graph considered as a traffic network: the smaller spectral gap corresponds to a bigger congestion.

For a  graph $G$, and let $a_{-}(G)$ be the fraction of non-edges $(u,v) \notin E$ such that the addition of $(u,v)$ to the set of edges decreases the spectral gap. Intuitively, the addition of an edge should increase the spectral gap as it brings the graph closer to the complete one, for which the spectral gap is maximal. However, the numerical experiments showed that the addition of an edge to a random graph frequently yields an opposite effect. This numerical data led to the following conjecture, which is a variant of the original conjecture of Chung.
\begin{conjecture}  \label{conj: gap decrease}
 Let $p \in (0,1)$ be fixed. Then there exists a constant $c(p)$ such that
 \[
   \lim_{n \to \infty} \Pr{a_{-}(G) \ge c(p) } =1.
 \]
\end{conjecture}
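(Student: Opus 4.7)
The plan is to convert the conjecture into a counting statement for the second eigenvector of $\LL_G$, and then to establish it via the delocalization and nodal-domain results proved earlier.

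Fix a non-edge $\{u,w\}$ and write $G^{uw}$ for the graph obtained by adding it. Let $f$ be the $D_G$-normalized minimizer of
\[
\l_2(\LL_G) = \min_{f \perp_{D_G} \mathbf{1}} \frac{f^\top L_G f}{f^\top D_G f}, \qquad L_G := D_G - A_G,
\]
so that $f = D_G^{-1/2} x$ with $x$ the unit eigenvector of $\hat A$ for $\hat\l_2$. Adding the edge shifts $L_G$ by $(e_u-e_w)(e_u-e_w)^\top$ and $D_G$ by $e_u e_u^\top + e_w e_w^\top$. Testing the new Rayleigh quotient against $g := f - c\mathbf{1}$, where the scalar $c = (f_u+f_w)/\sum_v d'_v$ is chosen to restore $D'$-orthogonality to $\mathbf{1}$, produces the upper bound
\[
\l_2(\LL_{G^{uw}}) - \l_2(\LL_G) \;\le\; (f_u-f_w)^2 - \l_2(\LL_G)(f_u^2+f_w^2) + O\!\left(\tfrac{\norm{f}_\infty^4}{n^2 p}\right).
\]
Using $\l_2(\LL_G) = 1-\hat\l_2$ with $|\hat\l_2|=O(1/\sqrt{np})$ from Proposition~\ref{prop: G(n,p)}(4), the spectral gap strictly decreases once $2 f_u f_w > \hat\l_2 (f_u^2 + f_w^2)$ with a margin exceeding the error term; to leading order this amounts to requiring $f_u$ and $f_w$ to share a sign and to be of comparable magnitude.

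By the Dekel-Lee-Linial-Arora-Bhaskara theorem, $f$ has, with probability $1-o(1)$, exactly two nodal domains $V = P \sqcup N$. The $\ell_\infty$-delocalization bound $\norm{f}_\infty = O(\log^C n /\sqrt{n^2 p})$ from \eqref{eq: l_infty}, combined with the no-gaps delocalization of Theorem~\ref{thm: delocalization general} applied to the shifted Bernoulli matrix $A_G - p \mathbf{1}_n$ via Remark~\ref{rem: shift}, shows that for any prescribed $\eta > 0$ one can choose $c_1, C_1 > 0$ so that at least $(1-\eta)n$ coordinates of $f$ satisfy $|f_v| \in [c_1/\sqrt{n^2 p},\; C_1 \log^C n/\sqrt{n^2 p}]$; call these the \emph{typical} vertices $T$. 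Indeed, if $V_{\mathrm{bad}} := \{v: |f_v| < c_1/\sqrt{n^2 p}\}$ had size $\ge \eta n$, then any $\eta n$-subset would have $\ell_2$ mass at most $c_1\sqrt{\eta/(np)}$ yet, by no-gaps, at least $(\eta s)^6/\sqrt{np}$, contradicting each other for $c_1$ sufficiently small. For a same-sign pair $u, w \in T$, the ratio $2 f_u f_w/(f_u^2 + f_w^2)$ is at least $c_2/\log^{2C} n$, which for fixed $p$ comfortably dominates $|\hat\l_2|$ and the error term in the Rayleigh-quotient estimate. Proposition~\ref{prop: G(n,p)}(5) applied to $P \cap T$ and $N \cap T$ then bounds the number of decreasing non-edges from below by $(1-p)\big[\tbinom{|P\cap T|}{2} + \tbinom{|N\cap T|}{2}\big] - O(n^{3/2})$; using $a^2+b^2 \ge (a+b)^2/2$ with $a+b \ge (1-\eta)n$, and dividing by the total number of non-edges $(1-p)\binom{n}{2}$, one obtains $a_-(G) \ge (1-\eta)^2/2 - o(1)$, yielding the conjecture with, say, $c(p) = 1/3$.

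The main obstacle is to push every input of this sketch through in the Bernoulli setting and to keep the constants uniform in the regime where $p$ is fixed. In particular: the $\ell_\infty$-delocalization of Erd\H{o}s et al.~\cite{EKYY} must be invoked for the specific eigenvector associated with $\hat\l_2$ (which sits at the edge rather than in the bulk of the spectrum of $\hat A$); the no-gaps statement must be transferred from the density hypothesis of Theorem~\ref{thm: delocalization general} to the shifted matrix $A_G - p \mathbf{1}_n$ via Remark~\ref{rem: shift} with polynomial control on the relevant constants; and one must verify that the second-order Rayleigh-quotient error and the discrepancy between the optimizer $f$ before and after perturbation do not accumulate over the $\Theta(n^2)$ non-edges under consideration. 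Executing these verifications simultaneously, while maintaining constants that depend only on $p$, is where the heaviest technical work lies.
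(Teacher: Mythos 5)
Your overall strategy is the same as the paper's: a Rayleigh--quotient (variational) criterion saying that adding a non-edge $\{u,w\}$ decreases $\l_2(\LL_G)$ whenever $f_u,f_w$ have the same sign and comparable magnitude (this is essentially Proposition~\ref{prop: gap decrease}, which the paper imports from \cite{ERS}), followed by $\ell_\infty$ and no-gaps delocalization of the second eigenvector to show that almost all coordinates lie in a controlled window, and then a count of non-edges inside the two sign classes via Proposition~\ref{prop: G(n,p)}(5). The gap-decrease criterion and the counting step in your sketch are fine.

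The genuine gap is in the delocalization step. You assert $\norm{f}_\infty = O(\log^C n/\sqrt{n^2p})$ ``from \eqref{eq: l_infty}'' and you apply Theorem~\ref{thm: delocalization general} (via Remark~\ref{rem: shift}) ``to $f$'' --- but both of those results concern eigenvectors of the adjacency matrix $A_G$ (respectively of $A_G-p\mathbf{1}_n$), whereas $f$ is the second eigenvector of the normalized Laplacian $\LL_G=I-D_G^{-1/2}A_GD_G^{-1/2}$, which is an eigenvector of neither. Transferring delocalization to $f$ is exactly the content of the paper's Lemma~\ref{lem: Laplacian}, and it is not a citation issue: one must first show that $x=d^{1/2}D_G^{-1/2}f$ is an \emph{approximate} eigenvector of $A_G$, with error $\norm{A_Gx-\hat\l_2 d\,x}_2\le C\log n$ (this uses Proposition~\ref{prop: G(n,p)}(3),(4)), then obtain the $\ell_\infty$ bound by a dyadic spectral-projection argument combining \eqref{eq: l_infty} with the local semicircle law, and obtain the pointwise lower bound by the approximate-eigenvector extension of no-gaps (Remark~\ref{rem: approximate eigenvectors} together with Remark~\ref{rem: shift}), checking that the error $C\log n$ is below the tolerance $M\d\sqrt n$. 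Note also that this transfer only yields $\norm{f}_\infty\le n^{-1/4}\log^C n$ (for unit $\ell_2$ normalization), not the $n^{-1/2}\log^C n$-type bound you claim; your stronger window is not available by these methods. Fortunately the argument does not need it: the criterion only requires the ratio $\max(f_u/f_w,f_w/f_u)\ll\sqrt{np}$, and the achievable window $[n^{-5/8},\,n^{-1/4}\log^C n]$ gives ratio $n^{3/8}\log^C n\ll\sqrt{n}$, which is how the paper closes the proof. As written, though, your proposal skips the entire transfer argument, which is where the real work lies. (The invocation of the Dekel--Lee--Linial/Arora--Bhaskara nodal-domain theorem is not needed; splitting the typical vertices by the sign of $f_v$ suffices.)
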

This conjecture has been proved by Eldan, R\`{a}sz, and Shramm \cite{ERS}. Their proof is based on the following deterministic condition on the eigenvectors which ensures that the spectral gap decreases after adding an edge.
\begin{proposition} \label{prop: gap decrease}
 Let $G$ be a graph such that $(1/2) np \le d_v \le (3/2)np$ for all vertices $v \in V$. Let $x \in S^{n-1}$  be the eigenvector of $\LL_G$ corresponding to $\l_2(G)$.
 If $(u,w) \notin E$ is a non-edge, and
 \[
  \frac{1}{\sqrt{np}} \left(x_u^2+x_w^2\right) +c_1(np)^{-2} < c_2 x_u x_v,
 \]
 then the addition of the edge $(u,w)$ to $G$ decreases the spectral gap.
\end{proposition}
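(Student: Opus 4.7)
The plan is to bound $\lambda_2(\mathcal{L}_{G'})$ from above by the Rayleigh quotient of a carefully chosen test vector built from the eigenvector $x$ of $\mathcal{L}_G$. Recall the Dirichlet identity $\langle \mathcal{L}_H z, z\rangle = \sum_{(i,j) \in E(H)}(\tilde z_i - \tilde z_j)^2$ with $\tilde z := D_H^{-1/2} z$, and that $\lambda_2(\mathcal{L}_{G'})$ equals the minimum of $\langle \mathcal{L}_{G'} z, z\rangle/\|z\|^2$ over all $z$ orthogonal to $(D')^{1/2}\mathbf{1}$. Setting $y := D^{-1/2}x$, so that $\langle \mathcal{L}_G x, x\rangle = \sum_{(i,j)\in E}(y_i-y_j)^2 = \lambda_2 := \lambda_2(\mathcal{L}_G)$, the goal is to choose $z$ so that the new Dirichlet form decomposes cleanly as the old one plus a single new-edge contribution.

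Concretely, the right choice is $z_v := \sqrt{d'_v/d_v}\,x_v$, which is precisely the rescaling making $(D')^{-1/2}z = y$. A direct computation then gives
\[
\langle \mathcal{L}_{G'} z, z\rangle = \lambda_2 + (y_u - y_w)^2, \qquad \|z\|^2 = 1 + y_u^2 + y_w^2.
\]
Because $x \perp D^{1/2}\mathbf{1}$, the inner product $\langle z, (D')^{1/2}\mathbf{1}\rangle$ collapses to $y_u + y_w$, so projecting $z$ onto the orthogonal complement of the kernel of $\mathcal{L}_{G'}$ preserves the numerator and subtracts only $(y_u+y_w)^2/(2|E|+2)$ from the denominator. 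The desired strict inequality $\lambda_2(\mathcal{L}_{G'}) < \lambda_2$ is then equivalent, after cross-multiplying and simplifying, to
\[
2 y_u y_w \;>\; (1-\lambda_2)(y_u^2 + y_w^2) + \frac{\lambda_2 (y_u + y_w)^2}{2|E|+2}. \qquad (\ast)
\]

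Next, I would translate $(\ast)$ into the $x$-coordinates appearing in the hypothesis using the degree regularity $d_v \in [np/2, 3np/2]$, which transfers between $y$'s and $x$'s up to absolute constants: the left-hand side is at least $\tfrac{4}{3np}\, x_u x_w$; the first term on the right is at most $\tfrac{2\,|1-\lambda_2|}{np}(x_u^2 + x_w^2)$; and, using $2|E|\ge n^2p/2$ together with $\|x\|_2 = 1$, the second term is at most $O((np)^{-3})$. Multiplying $(\ast)$ through by $np$ and invoking the spectral bound $|1-\lambda_2| \le C/\sqrt{np}$ from Proposition~\ref{prop: G(n,p)}(4) --- the implicit Erd\H{o}s--R\'enyi spectral input that motivates the factor $1/\sqrt{np}$ in the statement --- the inequality reduces to precisely the form of the hypothesis for appropriate absolute constants $c_1, c_2$.

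The main obstacle is the choice of rescaling in the test vector: only the specific $z_v = \sqrt{d'_v/d_v}\,x_v$ makes the new Dirichlet form decompose as $\lambda_2 + (y_u - y_w)^2$ with no leftover cross terms, while simultaneously producing the matching denominator $\|z\|^2 = 1 + y_u^2 + y_w^2$. A naive substitution like $z = x$ introduces rescaling corrections on every edge incident to $u$ or $w$ whose cumulative magnitude is of the same order as the signal $2x_u x_w/\sqrt{d_u d_w}$ from the new edge, destroying the clean inequality $(\ast)$. The rest is routine bookkeeping together with the mentioned spectral bound on $|1-\lambda_2|$, which tethers the quadratic term $(1-\lambda_2)(y_u^2 + y_w^2)$ on the right of $(\ast)$ to the main $2 y_u y_w$ term on the left.
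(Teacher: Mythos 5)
Your argument is correct and is, at bottom, the same variational argument the paper sketches: bound $\lambda_2(\LL_{G_+})$ by the Rayleigh quotient of a test vector built from $x$ after projecting away the new degree vector $(D_{G_+})^{1/2}\mathbf{1}$. The difference is your choice of test vector $z_v=\sqrt{d'_v/d_v}\,x_v$, which makes the Dirichlet identity exact, so the numerator is literally $\lambda_2+(y_u-y_w)^2$ and the denominator $1+y_u^2+y_w^2-(y_u+y_w)^2/(2|E|+2)$; the paper instead plugs in $x$ itself (equivalently $Qx$ with $Q$ the projection onto the orthogonal complement of the new Perron vector) and expands, which produces the rescaling corrections $(1-\lambda_2)\bigl(x_u^2/d_u+x_w^2/d_w\bigr)+O\bigl((np)^{-2}(x_u^2+x_w^2)\bigr)$ --- the ``tedious although straightforward calculation'' it defers to Eldan--R\'acz--Schramm. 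Your closing claim that taking $z=x$ would destroy the argument is therefore overstated: those corrections are exactly the $(1-\lambda_2)(y_u^2+y_w^2)$ term already present on the right of your $(\ast)$, so the naive substitution leads to the same leading-order condition, just with more bookkeeping; your rescaling is a genuine simplification, not a necessity. (You also silently corrected the typo $x_ux_v\mapsto x_ux_w$ in the statement, which is right.)

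One caveat to make explicit: your passage from $(\ast)$ to the stated hypothesis uses $|1-\lambda_2(\LL_G)|\le C/\sqrt{np}$, which is not among the proposition's hypotheses (only the degree regularity is assumed); it is the high-probability estimate of Proposition~\ref{prop: G(n,p)}(4) for $G(n,p)$, i.e.\ a property of the random model, not of an arbitrary nearly regular graph. Without some such spectral assumption the stated form with the factor $1/\sqrt{np}$ cannot be reached by this method, and the literal deterministic statement is in fact doubtful (for a nearly regular graph whose normalized spectral gap is far from $1$, e.g.\ a union of cliques, the hypothesis can hold for a non-edge joining two same-sign components while the gap does not decrease). The paper's own sketch has the identical reliance --- the $1/\sqrt{np}$ in its hypothesis encodes this bound, and the lemma in \cite{ERS} that it cites carries the spectral condition explicitly --- so this is a defect of the statement rather than of your proof; just state the assumption $|1-\lambda_2|\le C(np)^{-1/2}$ (or, equivalently, restrict to graphs additionally satisfying Proposition~\ref{prop: G(n,p)}(4)) instead of importing it implicitly.
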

The proof of proposition \ref{prop: gap decrease} requires a tedious, although a rather straightforward calculation. Denote by $y \in S^{n-1}$ the \emph{first} eigenvector of the graph $G_{+}$ obtained from $G$ by adding the edge $(u,w)$, and let $Q: \R^n \to \R^n$ be the orthogonal projection on the space $y^{\perp}$. By the variational definition of the second eigenvalue,
\[
 \l_2(G_{+})= \inf_{z \in y^{\perp} \setminus \{0\} }  \frac{\pr{z}{\LL_{G_{+}} z}}{\norm{z}_2^2}
 \le \frac{\pr{Qx}{\LL_{G_{+}} Qx}}{\norm{Qx}_2^2}
 =\frac{\pr{x}{\LL_{G_{+}} x}}{1-\pr{x}{y}^2},
\]
where the last equality follows since $\LL_{G_{+}} y=0$.
In the last formula, $y=\D/ \norm{\D}_2$, where $\D$ is the vector with coordinates $\D_v= \sqrt{d_v}$ for $v \notin \{u,w\}$ and $\D_v=\sqrt{d_v+1}$ for $v \in \{u,w\}$. The matrix $\LL_{G_{+}}$ can be represented in a similar way:
\[
 \LL_{G_{+}}=I_n-D_{+}^{-1/2} A_{G_+} D_{G_{+}}^{-1/2},
\]
where $A_G+(e_ue_w^T+e_we_u^T)$ and $D_{G_{+}}$  is defined as $D_G$ above.
The proposition follows by substituting these formulas in the previous estimate of $\l_2(G_{+})$ and simplifying the resulting expression. A reader can find the detailed calculation in \cite{ERS}.

Proposition \ref{prop: gap decrease}  allows us to lower bound $a_{-}(G)$.
The main technical tool in obtaining such a bound is delocalization. We will need both the $\ell_{\infty}$ and no-gaps delocalization of the second eigenvector of $\LL_G$. Both properties hold for the eigenvectors of $A_G$, so our task is to extend them to the normalized Laplacian.
\begin{lemma} \label{lem: Laplacian}
 Let $p \in (0,1)$. Let $f \in S^{n-1}$ be the second eigenvector of $\LL_G$. Then  with probability at least $1-\exp(-c \log^2 n)$,
 \[
   \norm{f}_{\infty} \le n^{-1/4} \log^C n
 \]
 and there exists a set $W \subset V$ with $|W^c| \le c' n^{1-1/48}$ such that for any $v \in W$,
 \[
   |f_v| \ge n^{-5/8}
 \]
 Here, $C,c,c'$ are positive constants whose value may depend on $p$.
\end{lemma}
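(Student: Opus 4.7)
The $\ell_{\infty}$ bound is the weaker of the two conclusions: it follows from the $\ell_{\infty}$ delocalization for eigenvectors of $A_G$ established in \cite{EKYY}, combined with approximation of $f$ by a spectral projection of $A_G$ onto eigenvalues near $np\mu$; the loss of $n^{1/4}$ in the exponent reflects the number of $A_G$-eigenvalues in the window of width $O(\log n)$ dictated by that approximation. For the no-gaps lower bound I plan to convert $f$ into an approximate eigenvector of $A_G$ and then apply Remark~\ref{rem: approximate eigenvectors} in the shifted form of Remark~\ref{rem: shift}.

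Setting $u := D_G^{-1/2} f / \|D_G^{-1/2} f\|_2$, the eigenvector equation $\hat{A}_G f = \mu f$ (with $|\mu| \le C/\sqrt{np}$ by Proposition~\ref{prop: G(n,p)}(4)) rearranges, after putting $g = D_G^{-1/2} f$, to $A_G g = \mu D_G g$, and therefore
\[
A_G u - (np\mu)\, u = \frac{\mu}{\|g\|_2}\, (D_G - np\, I)\, g.
\]
The right-hand side has $\ell_2$-norm at most $|\mu| \cdot \|D_G - np I\|_{\mathrm{op}} = O(\log n)$ by Proposition~\ref{prop: G(n,p)}(3). The same degree concentration also shows that $|f_v|$ and $|u_v|$ are comparable pointwise up to factors depending only on $p$, so no-gaps bounds transfer between $u$ and $f$ losslessly up to a $p$-dependent constant.

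Next I would invoke Remark~\ref{rem: approximate eigenvectors} with $A = A_G$ and shift $\mu_{\text{shift}} = p$. Both Assumption~\ref{A} and Assumption~\ref{A: general distribution} are satisfied by the Bernoulli$(p)$ entries, and $\|A_G - p\mathbf{1}_n\| = O(\sqrt n)$ with probability $1 - e^{-cn}$, so $M = M(p)$ is a $p$-dependent constant. The approximate eigenvalue $\l_0 = np\mu$ satisfies $|\l_0| = O(\sqrt n) \le M\sqrt n$. Choosing $\e = c_0 n^{-1/48}$ and $s = s_0$ a small positive constant, the admissibility condition $s \ge c_1 \e^{-7/6} n^{-1/6} + e^{-c_2/\sqrt{\e}}$ reduces to $s \gtrsim n^{-41/288}$ and is easily met for $n$ large, while $M \d \sqrt n = M (\e s)^6 \sqrt n \asymp n^{3/8}$ comfortably dominates the residual $O(\log n)$. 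The remark then yields, on an event of probability at least $1 - (Cs)^{\e n} - \P(\BB_{A_G - p\mathbf{1}_n, M}^c) \ge 1 - e^{-c n^{1 - 1/48}}$, the bound $\|u_I\|_2 \ge (\e s)^6 \gtrsim n^{-1/8}$ for every $I \subset [n]$ of size $\e n$.

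Transferring back through the pointwise comparison $|f_v| \asymp |u_v|$ gives $\|f_I\|_2 \gtrsim n^{-1/8}$ (up to a $p$-constant) on the same event for every $|I| = \e n$. If the exceptional set $W^c = \{v : |f_v| < n^{-5/8}\}$ had cardinality at least $\e n = c_0 n^{1-1/48}$, then any $I \subset W^c$ of size $\e n$ would obey $\|f_I\|_2 \le n^{-5/8} \sqrt{\e n} \asymp n^{-13/96}$, contradicting the lower bound $\gtrsim n^{-12/96}$ for $n$ sufficiently large. The hard part will be exactly threading this balance: the $O(\log n)$ residual of $u$ forces $\d = (\e s)^6 \gg \log n/\sqrt n$, yet $\d$ must also dominate the naive bound $n^{-5/8} \sqrt{\e n}$ for the contradiction to bite; the narrow corridor between these two constraints, together with the polynomial-in-$p$ dependence promised by Remark~\ref{rem: shift}, is what selects the exponent $1/48$ and requires $n$ to be large depending on $p$. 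Combined with the $\ell_\infty$ input from \cite{EKYY}, whose failure probability $e^{-c \log^2 n}$ swallows the much smaller $e^{-c n^{1-1/48}}$ coming from the no-gaps step, both conclusions hold with the stated probability $1 - e^{-c \log^2 n}$.
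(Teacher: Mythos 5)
Your treatment of the lower bound $|f_v|\ge n^{-5/8}$ is essentially the paper's own argument: pass to $u\propto D_G^{-1/2}f$, check via Proposition~\ref{prop: G(n,p)}(3)--(4) that $\norm{A_G u-np\mu\,u}_2=O(\log n)$, invoke Theorem~\ref{thm: delocalization general} through Remarks~\ref{rem: shift} and~\ref{rem: approximate eigenvectors} with $\e\asymp n^{-1/48}$ and a small constant $s$, so that $(\e s)^6\asymp n^{-1/8}$ dominates the residual, and derive a contradiction from a hypothetical set of $\e n$ coordinates all smaller than $n^{-5/8}$. Your bookkeeping (using $\sqrt{\e n}$ rather than the paper's cruder $\sqrt n$, which gives the margin $n^{-13/96}$ against $n^{-12/96}$) is if anything slightly cleaner, and the probability accounting is the same as in the paper.

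The gap is in the $\ell_\infty$ half, which you dispose of in one sentence. Approximating $u$ by its spectral projection onto a single window of width $O(\log n)$ around $np\mu$ does not yield $\norm{f}_\infty\le n^{-1/4}\log^C n$: the residual only gives $\norm{(I-P_\tau)u}_2\le C\log n/\tau$, so for a window of width $O(\log n)$ the out-of-window component of $u$ has $\ell_2$ (hence $\ell_\infty$) norm that is merely $O(1)$ and is not touched by the eigenvector bounds of \cite{EKYY}. Even optimizing a single window width $\tau$, i.e.\ balancing $\log n/\tau$ against $\sqrt{N(\tau)}\cdot\log^C n/\sqrt n\asymp\sqrt{\tau}\,n^{-1/4}\log^C n$ with the local-law count $N(\rho)\le c\rho\sqrt n$, gives only $n^{-1/6}\log^C n$, short of the stated exponent. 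The paper closes this with a multiscale argument: it bounds $\norm{(P_{\tau+\rho}-P_\tau)x}_\infty\le\norm{(I-P_\tau)x}_2\cdot\sqrt{N(\rho)}\cdot\max_j\norm{u_j}_\infty$, applies this at the dyadic scales $\tau=\rho=2^k$, and sums the resulting geometric series $\sum_k 2^{-k/2}\,n^{-1/4}\log^C n$. Some such decomposition over scales (or another mechanism) must appear in your write-up; as proposed, the $\ell_\infty$ claim of the lemma is not established. (For the downstream use in Theorem~\ref{thm: gap decrease} an $n^{-1/6}$ bound would actually suffice, but it would not prove the lemma as stated.)
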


\begin{proof}
Let us start with the $\ell_{\infty}$ delocalization.
 Let $d=np$ be the expected degree of a vertex, and set
 \[
  x=d^{1/2} D_G^{-1/2} f.
 \]
 By Proposition \ref{prop: G(n,p)} (3), $d^{1/2} D_G^{-1/2}= \text{diag}(s_v, \ v \in V)$, where  $s_v=1+o(1)$ for all $v \in V$, and  $\norm{x}_2=1+o(1)$ with probability close to $1$.
Hence, it is enough to bound $\norm{x}_{\infty}$. Let us check that $x$ is an approximate eigenvector of  $A_G$ corresponding to the approximate eigenvalue $\hat{\l}_2 d$, where $\hat{\l}_2$ is the second eigenvalue of the normalized adjacency matrix $D_G^{-1/2}A_G D_G^{-1/2}$.
By Proposition \ref{prop: G(n,p)} (4), $\hat{\l}_2 \le c/\sqrt{np}$ with high probability, hence
\begin{align*}
 \norm{A_G D_G^{-1/2}f - \hat{\l}_2 d D_G^{-1/2} f}_2
 &=|\hat{\l}_2| \cdot \norm{D_G^{1/2}f -  d D_G^{-1/2} f}_2  \\
 &\le \frac{c}{\sqrt{n}} \cdot  \max_{v \in V} d_v^{-1/2} \cdot \max_{v \in V} |d_v-d| \\
 &\le \frac{c}{n} \cdot \max_{v \in V} |d_v-d|
  \le \frac{C \log n}{\sqrt{n}},
\end{align*}
and so
\begin{equation}\label{eq: approx eigenvector}
  \norm{A_G x- \hat{\l}_2 d x}_2 \le C \log n.
\end{equation}
 Let $\rho \ge 1$. By the local semicircle law for $A_G$ (\cite{EKYY}, Theorem 2.10), any interval $[b,b+\rho]$ contains at most
\[
  N(\rho):=c \rho \sqrt{n}
\]
eigenvalues of $A_G$ with probability greater than $1-\exp(-c \log^2 n)$.

Denote the eigenvalues of $A_G$ by $\mu_1 \etc \mu_n$ and the corresponding eigenvectors by  $u_1 \etc u_n \in S^{n-1}$, and let $\a_j=\pr{x}{u_j}$.
Set $\mu=\hat{\l}_2 d$ and let $P_\t$ be the orthogonal projection on the span of the eigenvectors corresponding to the eigenvalues of $A_G$ in the interval $[\mu-\t, \mu+\t]$.
Then
\begin{align*}
 \t \norm{(I-P_\t)x}_2 
 &= \t \left( \sum_{|\mu_j -\mu|>\t} \a_j^2 \right)^{1/2} 
 \le  \left( \sum_{|\mu_j -\mu|>\t} (\mu_j-\mu)^2 \a_j^2 \right)^{1/2} \\
 &\le \norm{(A_G-\mu)x}_2 \le C \log n.
\end{align*}
and so,
\begin{equation}  \label{eq: spectr project}
 \norm{(I-P_\t)x}_2  \le \left( C \frac{\log n}{\t} \wedge 1 \right).
\end{equation}

For any $\t \ge 0$ and any $\rho \ge 1$,
\begin{align*}
 \norm{(P_{\t+ \rho}-P_\t)x}_{\infty}
 &=\norm{\sum_{|\mu_j-\mu| \in [\t, \t+\rho]} \a_j u_j}_{\infty}
 =\max_{v \in V} \left| \sum_{|\mu_j-\mu| \in [\t, \t+\rho]} \a_j u_{j,v} \right| \\
 &\le \left(\sum_{|\mu_j-\mu| \in [\t, \t+\rho]} \a_j^2 \right)^{1/2} \cdot
   \max_{v \in V} \left(\sum_{|\mu_j-\mu| \in [\t, \t+\rho]}  u_{j,v}^2 \right)^{1/2} \\
 &\le \norm{(P_{\t+\rho}-P_\t)x}_2 \cdot N^{1/2}(\rho) \cdot \max_{j \in [n]}\norm{u_j}_{\infty} \\
 &\le \norm{(I-P_\t)x}_2 \cdot \sqrt{\rho n^{1/2}} \cdot \frac{\log^C n}{\sqrt{n}},
\end{align*}
where we used \eqref{eq: l_infty} in the last inequality.
Combining this with \eqref{eq: spectr project}, we get
\[
  \norm{(P_{\t+ \rho}-P_\t)x}_{\infty}
  \le C \sqrt{\rho} \frac{\log^C n}{n^{1/4}} \cdot \left(\t^{-1} \wedge 1 \right).
\]
Applying this inequality with $\t=\rho=2^k, \ k \in \{0\} \cup \N$, we derive the required norm bound:
\begin{align*}
 \norm{x}_{\infty}
 &\le \norm{P_1 x}_{\infty}+ \sum_{k=0}^{\infty} \norm{(P_{2^{k+1}}-P_{2^k})x}_{\infty} \\
 &\le C  \frac{\log^C n}{n^{1/4}}+ \sum_{k=1}^{\infty} C 2^{-k/2} \frac{\log^C n}{n^{1/4}}
 \le C n^{-1/4} \log^C n.
\end{align*}
By the discussion above, $\norm{f}_{\infty} \le 2\norm{x}_{\infty}$ which finishes the proof of the first part of the lemma.

Now, let us prove the lower bound on the absolute values of most of the coordinates of $f$. As before, it is enough to prove a similar  bound on the coordinates of $x$. Assume to the contrary that there is a set $U \subset V$ with $|U| > cn^{1-1/48}$ such that for any $v \in U$, $|x_v| \le n^{-5/8}$.
Then
\[
 \norm{x_U}_2 \le \sqrt{n} \cdot n^{-5/8}=n^{-1/8}.
\]
Inequality \eqref{eq: approx eigenvector} shows that $x$ is an approximate eigenvector of $A_G$. Since $n^{-1/8} \gg Cn^{-1/2} \log^C n$, by Remarks \ref{rem: shift} and \ref{rem: approximate eigenvectors}, we can apply Theorem  \ref{thm: delocalization general} to $x$ with $s$ being an appropriately small constant and $\e=(1/s) n^{-1/48}$, so $(\e s)^6= n^{-1/8}$. This theorem shows that such set $U$ exists with probability at most $\exp(-\e n) \ll \exp(-c \log^2 n)$. The proof of the lemma is complete.
\end{proof}

Equipped with Proposition \ref{prop: gap decrease} and Lemma \ref{lem: Laplacian}, we can prove a stronger form of the conjecture showing that $c\ge 1/2-o(1)$. Let us formulate it as a theorem.
\begin{theorem}  \label{thm: gap decrease}
 Let $p \in (0,1)$, and let $G$ be a $G(n,p)$ graph. Then with probability $1-o(1)$,
 \[
   a_{-}(G) \ge \frac{1}{2}-O(n^{-c}).
 \]
\end{theorem}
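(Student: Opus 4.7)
The plan is to use Proposition \ref{prop: gap decrease} as a sufficient criterion and to count, out of all non-edges $(u,w) \notin E$, those whose addition decreases the spectral gap. Let $f \in S^{n-1}$ be the second eigenvector of $\LL_G$. By Lemma \ref{lem: Laplacian}, with probability at least $1 - \exp(-c \log^2 n)$ one has $\norm{f}_{\infty} \le n^{-1/4} \log^C n$ together with $|f_v| \ge n^{-5/8}$ for every $v$ in a set $W \subset V$ with $|V \setminus W| \le c' n^{1-1/48}$. Invoking the Nguyen--Tao--Vu theorem (no zero coordinates whp), $V$ splits as $V = P \sqcup N$ with $P = \{v: f_v > 0\}$ and $N = \{v: f_v < 0\}$.

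Next I would verify that whenever $(u,w) \notin E$ satisfies $u,w \in W$ and $u,w$ lie on the same side of the sign partition, the hypothesis of Proposition \ref{prop: gap decrease} automatically holds. Writing $a = |f_u|$, $b = |f_w|$, we have $f_u f_w = ab > 0$; dividing the required inequality by $ab$ reduces it to
\[
\frac{a/b + b/a}{c_2 \sqrt{np}} + \frac{c_1}{c_2 \, ab \, (np)^2} < 1.
\]
Since $a, b \in [n^{-5/8}, n^{-1/4} \log^C n]$, one has $a/b + b/a \le 2 n^{3/8} \log^C n$, making the first term $O(n^{-1/8} \log^C n) = o(1)$; and $ab \ge n^{-5/4}$ makes the second $O(n^{-3/4}) = o(1)$. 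The key arithmetic is that the gap $5/8 - 1/4 = 3/8$ between the no-gaps and $\ell_\infty$ exponents is strictly smaller than $1/2$, so that $\norm{f}_{\infty} / \min_{v \in W} |f_v| = o(\sqrt{np})$.

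Counting is now essentially routine. By convexity of $\binom{\cdot}{2}$, and using $|P \cap W| + |N \cap W| \ge n - c' n^{1-1/48}$,
\[
\binom{|P \cap W|}{2} + \binom{|N \cap W|}{2} \ge 2 \binom{\lfloor (n - c' n^{1-1/48})/2 \rfloor}{2} \ge \frac{n^2}{4} - O(n^{2-1/48}).
\]
Applying Proposition \ref{prop: G(n,p)}(5) separately to $P \cap W$ and to $N \cap W$, the number of non-edges both of whose endpoints lie in the same nodal side and in $W$ is at least $(1-p) n^2 / 4 - O(n^{2-1/48})$. Dividing by the total non-edge count $(1-p)\binom{n}{2} \pm n^{3/2}$ yields $a_{-}(G) \ge 1/2 - O(n^{-1/48})$, giving the theorem with $c = 1/48$. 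The main obstacle I anticipate is compatibility of exponents: the $3/8 < 1/2$ margin above is precisely what is needed for the sufficient criterion to close, so any weakening of Lemma \ref{lem: Laplacian} (in either the $\ell_\infty$ or the no-gaps direction) would force a corresponding reworking of the verification step.
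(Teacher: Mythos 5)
Your proposal is correct and follows essentially the same route as the paper: apply Lemma \ref{lem: Laplacian} to get the $\ell_\infty$ upper bound and the $n^{-5/8}$ lower bound on $W$, check that for two same-sign vertices in $W$ the ratio $(f_u^2+f_w^2)/(f_u f_w)\le Cn^{3/8}\log^C n\ll\sqrt{np}$ so Proposition \ref{prop: gap decrease} applies, and then count same-sign non-edges inside $W$ via Proposition \ref{prop: G(n,p)}(5) and convexity of $\binom{\cdot}{2}$ to get $a_-(G)\ge 1/2-O(n^{-1/48})$. The invocation of Nguyen--Tao--Vu is superfluous (the argument only uses vertices of $W$, where $|f_v|\ge n^{-5/8}>0$), but this does not affect correctness.
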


\begin{proof}
Let $f \in S^{n-1}$ be the eigenvector of $\LL_G$ corresponding to the second eigenvalue, and assume that the event described in Lemma \ref{lem: Laplacian} occurs. Let $W$ be the set defined in this lemma.
 Set
 \[
  W_+=\{v \in W:  \ f_v>0\}, \quad \text{and} \quad W_{-}=\{v \in W:  \ f_v<0\}.
 \]
 For any $v,w \in W_{+}$,
 \[
  \frac{f_v^2+f_w^2}{f_v f_w} \le 2 \max_{v,w \in W_{+}} \frac{f_v}{f_w}
  \le C n^{3/8} \log^C n \ll \sqrt{n}.
 \]
 Hence, if $(v,w)$ is a non-edge, then Proposition \ref{prop: gap decrease} implies that adding it to $G$ decreases the spectral gap.
 Similarly, we can show that adding any non-edge whose vertices belong to $W_{-}$, decreases the spectral gap as well.
 Let us count the number of the non-edges in $W_{+}$ and $W_{-}$ and compare it to the  total number of the non-edges.
Using Property (5), and the bound $|W^c| \le c n^{1-1/48}$, we obtain
 \begin{align*}
   a_{-}(G)
   &\ge \frac{|\text{Non-edges}(W_{+})|+|\text{Non-edges}(W_{-})|}{|\text{Non-edges}(V)|} \\
   &\ge \frac{(1-p) \left[\binom{|W_{+}|}{2}+ \binom{|W_{-}|}{2} \right]-2 n^{-3/2}}{(1-p) \binom{n}{2} +n^{3/2}} \\
   &\ge \frac{(1-p) \left[ \left(\frac{|W_{+}|+|W_{-}|}{2} \right)^2 -|W_{+}|-|W_{-}| \right]-2n^{3/2}}{(1-p) \binom{n}{2} +n^{3/2}}
   \ge \frac{1}{2}-O(n^{-c}),
 \end{align*}
 as claimed.
\end{proof}

\bibspread

\begin{bibdiv}
	\begin{biblist}


		\bib{AB}{unpublished}{
		      author={S. Arora},
		      author={A. Bhaskara},		      
		       title={Eigenvectors of random graphs: delocalization and nodal domains},
		        date={2011},
		   note={Manuscript, \biburl{http://www.cs.princeton.edu/~bhaskara/files/deloc.pdf}},	   
		}
		
		\bib{B1}{article}{
		      author={K. Ball},
		       title={ Cube slicing in $\R^n$},
		        date={1986},
		   journal={Proc. Amer. Math. Soc.},
		      volume={97},
		       pages={465\ndash 473},
		}
		
		\bib{B2}{incollection}{
		      author={K. Ball},
		       title={Volumes of sections of cubes and related problems},
		        date={1989},
		     booktitle={Geometric aspects of functional analysis (1987--88)},
		   series={Lecture Notes in Math.},
		      volume={1376},
		      publisher={Springer, Berlin},
		       pages={251\ndash 260},
		}

		\bib{BN}{unpublished}{
		      author={K. Ball},
		      		      author={F. Nazarov},
		       title={Little level theorem and zero-Khinchin inequality},
		        date={1996},
		   note={Manuscript, \biburl{http://www.math.msu.edu/~fedja/prepr.html}},
		}
		
		\bib{Barthe}{article}{
		      author={F. Barthe},
		       title={In\'egalit\'es de Brascamp-Lieb et convexit\'e},
		        date={1997},
		   journal={C. R. Acad. Sci. Paris S\'er. I Math.},
		      volume={324},
		      number={8},		      
		       pages={885\ndash 888},
		}
		
		\bib{BY}{article}{
		      author={P. Bourgade},
		      author={H.-T. Yau},		      
		       title={The eigenvector moment flow and local quantum unique ergodicity},
		        date={2017},
		   journal={Comm. Math. Phys.},
		      volume={350},
		      number={1},
		       pages={231\ndash 278},
		}

		\bib{BL}{article}{
		      author={H. J. Brascamp},
		      author={E. H. Lieb},		      
		       title={ Best constants in Young's inequality, its converse, and its generalization to more than three functions},
		        date={1976},
		   journal={Advances in Math.},
		      volume={20},
		       pages={151\ndash 173},
		}
		
		\bib{CYZ}{article}{
		      author={F. Chung},
		      author={S. Young},	
		      author={W. Zhao},		      	      
		       title={Braess's paradox in expanders},
		        date={2012},
		   journal={Random Structures and Algorithms},
		      volume={41},
		      		      number={4},
		       pages={451\ndash 468},
		}

		\bib{DLL}{article}{
		      author={Y. Dekel},
		      author={J. R. Lee},	
		      author={N. Linial},		      	      
		       title={Eigenvectors of random graphs: nodal domains},
		        date={2011},
		   journal={Random Structures and Algorithms},
		      volume={39},
		      		      number={1},
		       pages={39\ndash 58},
		}

		\bib{ERS}{unpublished}{
		      author={R. Eldan},
		      		      author={M. R\'asz},
		      author={T. Schramm},				      
		       title={Braess's paradox for the spectral gap in random graphs and delocalization of eigenvectors},
		        date={2016},
		   note={To appear in Random Structures and Algorithms, \bibarxiv{1504.07669}},
		}

		\bib{EKYY}{article}{
		      author={ L. Erd\H{o}s},
		      author={A. Knowles},	
		      author={H.-T. Yau},		
		      author={J. Yin},		            	      
		       title={Spectral statistics of Erd\"os-R\'enyi graphs I: local semicircle law},
		        date={2013},
		   journal={Annals of Probability},
		      volume={41},
		      		      number={3B},
		       pages={2279\ndash 2375},
		}	

		\bib{ESY 1}{article}{
		      author={ L. Erd\H{o}s},
		      author={B. Schlein},	
		      author={H.-T. Yau},			            	      
		       title={Semicircle law on short scales and delocalization of eigenvectors for Wigner random matrices},
		        date={2009},
		   journal={Annals of Probability},
		      volume={37},
		       pages={815\ndash 852},
		}

		\bib{ESY 2}{article}{
		      author={ L. Erd\H{o}s},
		      author={B. Schlein},	
		      author={H.-T. Yau},			            	      
		       title={Local semicircle law and complete delocalization for Wigner random matrices},
		        date={2009},
		   journal={Comm. Math. Phys.},
		      volume={287},
		       pages={641\ndash 655},
		}

		\bib{Halasz 75}{article}{
		      author={ G. Hal\'asz},			            	      
		       title={Estimates for the concentration function of combinatorial number theory and probability},
		        date={1977},
		   journal={Periodica Mathematica Hungarica},
		      volume={8},
		       pages={197\ndash 211},
		}

		\bib{LPP}{article}{
		      author={ G. Livshyts},
		      author={G. Paouris},	
		      author={P. Pivovarov},			            	      
		       title={On sharp bounds for marginal densities of product measures},
		        date={2016},
		   journal={Israel J. Math.},
		      volume={216},
		      		      number={2},		      
		       pages={877\ndash 889},
		}

		\bib{NTV}{article}{
		      author={ H. Nguyen},
		      author={ T. Tao},	
		      author={V. Vu},			            	      
		       title={Random matrices: tail bounds for gaps between eigenvalues},
		        date={2017},
		   journal={Probability Theory Related Fields},
		      volume={167},
		      		      number={3\ndash 4},		      
		       pages={777\ndash 816},
		}

		\bib{OVW}{article}{
		      author={ S. O'Rourke},
		      author={ V. Vu},	
		      author={K. Wang},			            	      
		       title={Eigenvectors of random matrices: a survey},
		        date={2016},
		   journal={J. Combin. Theory Ser. A },
		      volume={144},		      
		       pages={361\ndash 442},
		}
		
		\bib{R survey}{incollection}{
		      author={M. Rudelson},
		       title={ Recent developments in non-asymptotic theory of random matrices},
		        date={2014},
		     booktitle={Modern aspects of random matrix theory},
		   series={ Proc. Sympos. Appl. Math.},
		      volume={72},
		      publisher={Hindustan Book Agency, New Delhi},
		       pages={1576\ndash 1602},
		}		
		
		\bib{RV ICM}{incollection}{
		      author={M. Rudelson},
		      author={R. Vershynin},
		       title={ Non-asymptotic theory of random matrices: extreme singular values},
		        date={2010},
		     booktitle={Proceedings of the International Congress of Mathematicians},
		      volume={III},
		      publisher={Amer. Math. Soc., Providence, RI},
		       pages={83\ndash 120},
		}
		
		\bib{RV small ball}{article}{
		      author={M. Rudelson},
		      author={R. Vershynin},				            	      
		       title={Small ball probabilities for linear images of high dimensional distributions},
		        date={2015},
		   journal={ Int. Math. Res. Not.},
		      volume={19},		      
		       pages={9594\ndash 9617},
		}

		\bib{RV delocalization}{article}{
		      author={M. Rudelson},
		      author={R. Vershynin},				            	      
		       title={ Delocalization of eigenvectors of random matrices with independent entries},
		        date={2015},
		   journal={  Duke Math. J.},
		      volume={164},	
		      		      number={13},			      	      
		       pages={2507\ndash 2538},
		}

		\bib{RV no-gaps}{article}{
		      author={M. Rudelson},
		      author={R. Vershynin},				            	      
		       title={No-gaps delocalization for general random matrices},
		        date={2016},
		   journal={Geom. Funct. Anal. },
		      volume={26},	
		      		      number={6},			      	      
		       pages={1716\ndash 1776},
		}

		\bib{Tao book}{book}{
		      author={T. Tao,},			            	      
		       title={Topics in random matrix theory},
		        date={2012},
		   publisher={American Mathematical Society, Providence, RI},
		      series={Graduate Studies in Mathematics},	
		      		      volume={132},			      	      
		}

		\bib{V survey}{incollection}{
		      author={R. Vershynin},
		       title={Introduction to the non-asymptotic analysis of random matrices},
		        date={2012},
		     booktitle={Compressed sensing},
		      publisher={Cambridge Univ. Press, Cambridge},
		       pages={210\ndash 268},
		}

		\bib{VW}{article}{
		      author={V. Vu},
		      author={K. Wang},				            	      
		       title={Random weighted projections, random quadratic forms and random eigenvectors},
		        date={2015},
		   journal={Random Structures and Algorithms},
		      volume={47},	
		      		      number={4},			      	      
		       pages={792\ndash 821},
		}
										
	\end{biblist}
\end{bibdiv}

\end{document}

\subsection{Problems}
\begin{enumerate}
  \item Esseeen's Lemma.
  \item Littlewood-Offord theorem from Esseen's Lemma.
  \item Discrete version of the small ball estimate by smoothing.
  \item Multidimensional Esseen's lemma.
  \item Small ball estimate for a general matrix.
\end{enumerate}